\newtheorem{theorem}{Theorem}[section]
\newtheorem{proposition}[theorem]{Proposition}
\newtheorem{lemma}[theorem]{Lemma}
\newtheorem{construction}{Construction}
\newtheorem{conjecture}[theorem]{Conjecture}
\newtheorem{definition}[theorem]{Definition}
\newtheorem{example}[theorem]{Example}
\newtheorem{question}[theorem]{Question}
\def\a{\alpha}
\def\b{\beta}
\def\t{\theta}
\def\l{\lambda}
\def\cb{\mathcal{B}}
\def\ca{\mathcal{A}}
\def\ct{\mathcal{T}}
\def\F{\mathbb{F}}
\def\R{\mathbb{R}}
\DeclareMathOperator{\rk}{rk}
\DeclareMathOperator{\tr}{tr}
\DeclareMathOperator{\sr}{sr}
\DeclareMathOperator{\pr}{pr}
\title{Small sunflowers and the structure of slice rank decompositions}
\author{Thomas Karam\footnote{Mathematical Institute, University of Oxford. Email: \texttt{thomas.karam@maths.ox.ac.uk}.}}
\begin{document}
\maketitle

\begin{abstract}

Let $d \ge 3$ be an integer. We show that whenever an order-$d$ tensor admits $d+1$ decompositions according to Tao’s slice rank, if the linear subspaces spanned by their one-variable functions constitute a sunflower for each choice of special coordinate, then the tensor admits a decomposition where these linear subspaces are contained in the centers of these respective sunflowers. As an application, we deduce that for every nonnegative integer $k$ and every finite field $\F$ there exists an integer $C(d,k,|\F|)$ such that every order-$d$ tensor with slice rank $k$ over $\F$ admits at most $C(d,k,|\F|)$ decompositions with length $k$, up to a class of transformations that can be easily described.

\end{abstract}

\tableofcontents

\section{Introduction}

Throughout we will use the following notations. If $k$ is a positive integer, then $[k]$ will denote the set $\{1, \dots, k\}$ of positive integers up to $k$. The letter $\F$ will denote a field. All our statements will be uniform with respect to the choice of the field $\F$, unless we explicitly write the assumption that the field $\F$ is finite. Even in that latter case, the dependence will only be on the size of the field $\F$ and not involve its structure.

We will prove various statements involving order-$d$ tensors, with $d \ge 2$ some integer. Order-$d$ tensors will be taken to be functions $[n_1] \times \dots \times [n_d] \to \F$ for some positive integers $n_1, \dots, n_d$. All our statements will be uniform in the integers $n_1, \dots, n_d$ once all other parameters are fixed, and we will use the integers $n_1, \dots, n_d$ without defining them again in our statements.

Let $k$ be a positive integer, and let $\F$ be a field. If $M:[n_1] \times [n_2] \to \F$ is a matrix with rank $k$, then all decompositions of $M$ as a sum of $k$ rank-$1$ matrices can be obtained from one another up to changes of bases. In particular, there exist some linear subspaces $A_1 \subset \F^{n_1}, A_2 \subset \F^{n_2}$ such that if \[M(x,y) = \sum_{i=1}^k f_i(x) g_i(y)\] is a decomposition of $M$, then $\langle f_1, \dots, f_k \rangle = A_1$ and $\langle g_1, \dots, g_k\rangle = A_2$. If $\F$ is a finite field, then it is well-known that the number of decompositions of $M$, where two decompositions are viewed as the same if their $2k$-tuples $(f_1, \dots, f_k,g_1, \dots, g_k)$ are the same, is equal to the number \begin{equation} (|\F|^k - 1) (|\F|^k - |\F|) \dots (|\F|^k - |\F|^{k-1}) = |\F|^{k^2} \prod_{i=1}^{k} (1-|\F|^{-i}) \label{number of decompositions of matrices} \end{equation} of bases of $\langle f_1, \dots, f_k \rangle$. This number is between $\omega |\F|^{k^2}$ and $|\F|^{k^2}$, where $\omega$ is taken to be the absolute constant \[\prod_{i=1}^{\infty} (1-2^{-i}) >0.\]

We note that even from a purely qualitative perspective, the analogous boundedness statement becomes false in general for decompositions of length even one greater than the rank of $M$. For instance, if $M(x,y) = f(x) g(y)$, then for any functions $f_1,f_2: [n] \to \F$ satisfying $f_2 - f_1 = f$, we have \[M(x,y) = f_2(x) g(y) - f_1(x) g(y),\] so as long as the function $f_2$ is not fixed, the function $f_1$ can be completely arbitrary.

Our main aim in the present paper is to obtain a comparable statement for the notion of slice rank on higher-order tensors. We have found the task of formulating an appropriate corresponding statement to already be challenging, which is why we shall state it precisely only in Section \ref{Section: Statements of main results}, after going through several constructions guiding us towards its formulation and ruling out stronger ones in Section \ref{Section: Constructions of different minimal length slice rank decompositions of the same tensor}. Informally speaking, we will prove that up to a natural class of transformations, the number of minimal-length slice rank decompositions of a tensor over a finite field is bounded above in a way that depends only on the order of the tensor, on its slice rank, and on the size of the field. As we will also discuss, the bound that we obtain cannot be too far from the optimal bound.  Let us recall the definition of the slice rank and even before this, the definition of the tensor rank.

\begin{definition} Let $d \ge 2$ be an integer, and let $T: [n_1] \times \dots \times [n_d] \to \F$ be an order-$d$ tensor. We say that the \emph{tensor rank} of $T$, denoted by $\tr T$, is the smallest nonnegative integer $k$ such that there exist functions $a_{j,i}: [n_j] \to \F$ for each $j \in [d]$ and $i \in [k]$ satisfying

\begin{equation} T(x_1, \dots, x_d) = \sum_{i=1}^k a_{1,i}(x_1) a_{2,i}(x_2) \dots a_{d,i}(x_d) \label{tensor rank decomposition} \end{equation} is satisfied for all $(x_1, \dots, x_d) \in [n_1] \times \dots \times [n_d]$.

We say that an expression such as \eqref{tensor rank decomposition} is a \emph{tensor rank decomposition} of $T$, say that the \emph{length} of the decomposition is the integer $k$, and say that the decomposition has \emph{minimal length} if its length is equal to the tensor rank of $T$. \end{definition}

For every $x \in [n_1] \times \dots \times [n_d]$ and every $j \in [d]$, we write $\overline{x_j}$ for $(x_{j’})_{j’ \in [d] \setminus \{j\}}$.

\begin{definition} Let $d \ge 2$ be an integer, and let $T: [n_1] \times \dots \times [n_d] \to \F$ be an order-$d$ tensor. We say that the \emph{slice rank} of $T$, denoted by $\sr T$, is the smallest nonnegative integer $k$ such that there exist nonnegative integers $r_1, \dots, r_d$ with $r_1 + \dots + r_d = k$ satisfying one of the following two equivalent properties. \begin{enumerate}

\item There exist matrices $M_j: [n_j] \times (\prod_{j' \neq j} [n_{j'}]) \to \F$ with rank at most $r_j$ for each $j \in [d]$ such that \[T(x_1, \dots, x_d) = \sum_{j=1}^d M_j(x_j, \overline{x_j})\] is satisfied for all $(x_1, \dots, x_d) \in [n_1] \times \dots \times [n_d]$.

\item For each $j \in [d]$ and $i \in [r_j]$ there exist some functions $a_{j,i}: [n_j] \to \F$ and $b_{j,i}: \prod_{j' \neq j} [n_{j'}] \to \F$ such that \begin{equation} T(x_1, \dots, x_d) = \sum_{j=1}^d \sum_{i=1}^{r_j} a_{j,i}(x_j) b_{j,i}(\overline{x_j}) \label{slice rank decomposition} \end{equation} is satisfied for all $(x_1, \dots, x_d) \in [n_1] \times \dots \times [n_d]$. \end {enumerate} 

We say that an expression such as \eqref{slice rank decomposition} is a \emph{slice rank decomposition} of $T$, say that the \emph{length} of the decomposition is the integer $r_1 + \dots + r_d$, and say that the decomposition has \emph{minimal length} if its length is equal to the slice rank of $T$.

\end{definition}

Before going further, let us recall the history of the slice rank and some of the ways in which it has been studied.

The slice rank was originally introduced by Tao \cite{Tao} in 2016 as a reformulation of a central idea from the breakthrough of Croot, Lev and Pach \cite{Croot Lev Pach}, which led to the solution to the cap-set problem by Ellenberg and Gijswijt \cite{Ellenberg Gijswijt}. Since then, the slice rank has been used successfully several times as a tool to solve combinatorial problems. For instance, it was also shown by Naslund and Sawin \cite{Naslund and Sawin} that subsets of the cube $\{0,1\}^n$ containing no $3$-sunflower are exponentially sparse, and further properties on the slice rank from Sawin and Tao \cite{Sawin and Tao} were later used by Sauermann \cite{Sauermann} to obtain properties guaranteeing the existence of solutions with pairwise distinct variables to systems of equations in subsets of vector spaces over finite prime fields which are not exponentially sparse. In another direction, a variant of the slice rank, the partition rank, was defined by Naslund in \cite{Naslund}, originally to obtain polynomial upper bounds on problems on $k$-right corners, and was more recently used again by Naslund \cite{Naslund recent}, this time to obtain exponential lower bounds on the chromatic number of $\R^n$ with multiple forbidden distances.

Although all these works have primarily used the slice rank and the partition rank as tools, interest in studying them and their basic properties for their own sake has recently been building up. In the post of Sawin and Tao \cite{Sawin and Tao}, a characterisation of the slice rank in terms of coverings in the case of tensors supported inside an antichain had already been discussed, and this characterisation later played an important role in the work of Sauermann \cite{Sauermann} that we previously mentioned. Later, it was proved by Gowers \cite{Gowers} that the slice rank of a direct sum of two tensors is the sum of their slice ranks.

As another example in this direction, the facts that a high-rank matrix must contain a high-rank submatrix of not too large size and that a matrix with high rank after every modification of the diagonal must have a high rank submatrix for which the sets of rows and columns are disjoint were extended by the author in \cite{K.} to a class of notions of rank containing the slice and partition ranks. Still on the topic of subtensors, it was shown by Bri\"et and Castro-Silva \cite{Briet Castro-Silva}, again for a wide class of notions of rank, that a random subtensor of a high-rank subtensor must have high rank for some natural way of choosing the restriction at random, together with some analogous results on random restrictions of polynomials.

These three results found applications. The first, conditionally on better bounds, was discussed in \cite{Briet Castro-Silva} as a way to provide an alternate proof of some of the main results of that paper. The second was used by Gowers and the author \cite{Gowers and K equidistribution} as a key stepping stone to extend to distributions inside $\{0,1\}^n$ the result of Green and Tao \cite{Green and Tao} on the equidistribution of high-rank polynomials over finite prime fields. The third was applied (primarily in the case of polynomials, although the case of tensors was involved as well) by Bri\"et, Buhrman, Castro-Silva, and Neumann \cite{Briet Buhrman Castro-Silva Neumann} to prove limitations on the decoding of corrupted classical and quantum error-correcting codes with $\mathrm{NC}^0[\oplus]$ circuits. 

The slice and partition ranks have also been studied using more algebraic methods. For instance, a “universal" role that the partition rank plays among notions of rank has been explored by Bik, Draisma and Eggermont \cite{Bik Draisma Eggermont}. Another universality result was proved by Kazhdan and Ziegler \cite{Kazhdan and Ziegler} on the strength of polynomials (an analogue of the partition rank of tensors), and later led to an extension by Bik, Danelon, Draisma, and Eggermont \cite{Bik Danelon Draisma Eggermont} to the more abstract setting of arbitrary polynomial functors.

Let us finally mention that in the case of the tensor rank, much effort has gone into understanding the notion of identifiability of a tensor, which is much stronger than the mere boundedness of its set of minimal-length decompositions that we focus on (primarily in the case of the slice rank) in the present paper: if $k$ is a nonnegative integer, then a tensor $T$ with tensor rank $k$ is said to be identifiable if there is only one way of writing it as a sum \[T_1 + \dots + T_k,\] up to permutations of the $k$ tensors $T_1, \dots, T_k$. This area of research, perhaps started by Kruskal \cite{Kruskal} in 1977, is still very active. A recent paper \cite{Ballico Bernardi Santarsiero} of Ballico, Bernardi and Santarsiero contains, among other interesting things, a wealth of references on identifiability.

We now define some more notations which we shall use throughout the paper. Let $d \ge 2$ be an integer. If $x$ is an element of $[n_1] \times \dots \times [n_d]$ and $J$ is a subset of $[d]$, then we shall write $x(J)$ for the restriction of $x$ to its coordinates in $J$, that is, for the element of $\prod_{j \in J} [n_j]$ defined by $x(J)_j = x_j$ for every $j \in J$. If $a,a':[n_j] \to \F$ are functions for some $j \in [d]$, then we write $a.a'$ for the element of $\F$ defined by \[\sum_{x_j \in [n_j]} a(x_j) a'(x_j).\] More generally, if $J' \subset J$ are non-empty subsets of $[d]$, and $U: \prod_{j \in J} [n_j] \to \F$, $U': \prod_{j \in J'} [n_j] \to \F$ are functions, then $U'.U: \prod_{j \in J \setminus J'} [n_{j}] \to \F$ will denote the function (or rather element of $\F$ if $J'=J$) defined by \[(U'.U)(x(J \setminus J')) = \sum_{x(J') \in \prod_{j \in J'} [n_j]} U'(x(J')) U(x(J))\] for all $x(J \setminus J')$. If $s$ is a positive integer, and $j_1, \dots, j_s$ are pairwise distinct elements of $[d]$, then we write $\overline{x_{j_1}, \dots, x_{j_s}}$ for $x([d] \setminus \{j_1, \dots, j_s\})$.

We will repeatedly use the following observations. If a decomposition \eqref{slice rank decomposition} has minimal length, then for every $j \in [d]$ its functions $a_{j,i}$ with $i \in [r_j]$ are linearly independent. Even if the decomposition does not have minimal length, each of the $d$ parts of the decomposition and hence the whole decomposition can always be written such that these functions are linearly independent, and we will always assume this to be the case whenever we write any slice rank decomposition of any tensor.

It follows from Gaussian elimination that if for some $j \in [d]$ and some integer $r \ge 1$ the functions $a_1, \dots, a_r: [n_j] \to \F$ are linearly independent, then there exist functions $a_1^*, \dots, a_r^*: [n_j] \to \F$ satisfying $a_i^*.a_{i'} = 1_{i=i'}$ for any $i,i' \in [r]$. We will refer to the family $(a_1^*, \dots, a_r^*)$ as a family of \emph{dual functions} to the family of functions $(a_1, \dots, a_r)$. More generally, if for some non-empty subset $J$ of $[d]$ and some integers $r_j \ge 1$ for each $j \in J$ the functions $a_{j,1}, \dots, a_{j,r_j}$ are linearly independent for every $j \in J$, then the tensor products \[\otimes_{j \in J} a_{j,i_j}: \prod_{j \in J} [n_j] \to \F \] with $(i_j)_{j \in J}$ are linearly independent, and furthermore \[(\otimes_{j \in J} a_{j,i_j}^*). (\otimes_{j \in J} a_{j,i_j'}) = \prod_{j \in J} 1_{i_j = i_j’}\] is equal to $1$ if $i_j=i_j’$ for every $j \in J$, and to $0$ otherwise.

The remainder of the paper is organised as follows. In Section \ref{Section: Constructions of different minimal length slice rank decompositions of the same tensor} we describe several rather systematic ways in which we may construct several slice rank decompositions of the same tensor. Against these constructions we then formulate in Section \ref{Section: Statements of main results} our result on the structure of slice rank decompositions, Theorem \ref{Main theorem}, together with two other results on slice rank decompositions, Proposition \ref{order-$d$ decompositions of zero} and Theorem \ref{At most d decompositions}, from which Theorem \ref{Main theorem} can be deduced and which also appear to be of intrinsic interest. This deduction is then performed in Section \ref{Section: Deduction of the structure theorem on slice rank decompositions}. In Section \ref{Section: the tensor rank case}, which is independent from the other sections, we give a simple proof of a statement showing that in the case of the tensor rank, the result on decompositions of matrices extends in a rather optimistic way. In Section \ref{Section: The proof for order-$3$ tensors} we then prove Proposition \ref{order-$d$ decompositions of zero} and Theorem \ref{At most d decompositions} in the case of order-$3$ tensors, and in Section \ref{Section: The proof in the general case} we prove them in the general case of order-$d$ tensors for every $d \ge 3$. Finally, in Section \ref{Section: Open questions} we conclude by mentioning some questions which remain open.

Most of the underlying basic ideas involved in our proofs of the general $d$ case already arise for $d=3$, and these are discussed in more detail in the $d=3$ case, which is why Section \ref{Section: The proof for order-$3$ tensors} is slightly lengthier than Section \ref{Section: The proof in the general case} despite proving a less general result.

\section*{Acknowledgements}

The author thanks Timothy Gowers for discussions around Gowers's paper \cite{Gowers} that attracted the author's attention to the idea of slice rank decompositions of the zero tensor, which was involved there and plays a role in the present paper as well. The author also thanks Jordan Ellenberg for encouraging him to prove results stating that even if the analogue of a “perfect" property which holds for the matrix rank does not hold for other notions of rank on tensors, some appropriate weakening of it does.

\section{Constructions of different minimal-length slice rank decompositions} \label{Section: Constructions of different minimal length slice rank decompositions of the same tensor}

We next go over five ways in which several minimal-length slice rank decompositions can arise for the same tensor.

\begin{construction} \label{Construction 1} \emph{Since it is possible to obtain new decompositions of matrices using a change of basis, it is possible to obtain new decompositions of $T$ from an existing decomposition by rewriting the decompositions of any of the individual matrices $M_1, \dots, M_d$ while leaving the $d$-tuple $(M_1, \dots, M_d)$ unchanged.} \end{construction}

However, this $d$-tuple is not unique in general, and there are other ways of obtaining new decompositions, which brings us to the remaining four constructions.

\begin{construction} \label{Construction 2} \emph{To see that the matrices $M_1, \dots, M_d$ are not individually determined, consider any tensor $T$ for which the slice rank and tensor rank are equal to some common positive integer $k$. Let \[T(x_1, \dots, x_d) = \sum_{i=1}^k a_{1,1}(x_1) \dots a_{d,1}(x_d)\] be a tensor rank decomposition of $T$ with length $k$. Then for every partition $\{I_1, \dots, I_d\}$ of $[k]$ into $d$ sets $I_1, \dots, I_d$ that are possibly empty, the decomposition \[T(x_1, \dots, x_d) = \sum_{j=1}^d \sum_{i \in I_j} a_{j,i}(x_i) (\prod_{j’ \neq j} a_{j’,i}(x_{j’})) \] is a slice-rank decomposition of $T$ with length $k$.}

\emph{These tensors exist, since for instance the “identity" tensor $I_{d,k}: [k]^d \to \F$ defined by \[I_{d,k}(x_1, \dots, x_d) = \sum_{i=1}^k 1_{x_1=i} \dots 1_{x_d=i}\] was shown by Tao (\cite{Tao}, Lemma 1) to have slice rank $k$. Since the tensor rank of this tensor is at most $k$ by definition, and since the slice rank is always at most the tensor rank, it follows that both ranks are equal for this tensor.}

\emph{For instance, in the case $d=3$, we obtain a slice rank decomposition \[I_{3,k}(x,y,z) = \sum_{i \in I_1} 1_{x=i} 1_{y=z=i} + \sum_{i \in I_2} 1_{y=i} 1_{x=z=i} + \sum_{i \in I_3} 1_{z=i} 1_{x=y=i}\] for every tripartition $\{I_1,I_2,I_3\}$ of $[k]$ (again, with $I_1,I_2,I_3$ allowed to be empty).}

\end{construction}

As we can see from Construction \ref{Construction 2} not even the sizes of the three parts of the decomposition are determined in general. One comment that can nonetheless be made about Construction \ref{Construction 2} is that although the matrices $M_1, \dots, M_d$ are not always the same, the set of all summands is always the same set \[\{1_{x_1=i} \dots 1_{x_d=i}: i \in [k]\}.\] However, there is another class of examples where this usually does not hold, even up to changes of bases in the matrices $M_1, \dots, M_d$, and which even encompasses typical tensors.

\begin{construction} \label{Construction 3} \emph{A tensor $T:[k]^d \to \F$ can in particular be written in $d$ different ways as a sum of its $(d-1)$-variable slices of each type, i.e. be decomposed as \[T(x_1, \dots, x_d) = \sum_{i=1}^k 1_{x_j=i} T(x_1, \dots, x_{j-1}, i, x_{j+1}, \dots, x_d).\] If $T$ has slice rank $k$, then these $d$ decompositions are each minimal-length decompositions. If the field $\F$ is finite, then the number of tensors $[k]^d \to \F$ with slice rank at most $k-1$ is at most \[k^d |\F|^{(k-1)(k^{d-1}+k)} = k^d |\F|^{k^d-k^{d-1}+k^2-k}.\] Provided that $d \ge 3$, we can crudely bound this number above by \[(k^d / |\F|^{k}) |\F|^{k^d}.\] The ratio is less than $1$ for $k$ large enough, and tends to $0$ as $k$ tends to infinity, which shows that random tensors have slice rank $k$. Unlike in the $I_{d,k}$ example, the sets of $k$ summands involved in each of the $d$ decompositions are in particular usually very different.} \end{construction}

\begin{construction} \label{Construction 4} \emph{Provided that a tensor $T$ with slice rank $k$ does have a decomposition with length $k$ that does not have all its terms of the same of the three types, there is a systematic way of changing some of the components of the decomposition and obtain another decomposition with length $k$. In the $d=3$ case, for instance, if \[T(x,y,z) = a(x) b(y,z) + c(y) d(x,z),\] then for any function $e: [n_3] \to \F$, the functions $b' = b + c \otimes e$ and $d' = d - a \otimes e$ satisfy \[T(x,y,z) = a(x) b'(y,z) + c(y) d'(x,z).\]}

\emph{Because the function $e$ is arbitrary, and in particular there are unboundedly many such functions as $n$ tends to infinity, it is never true, for any integers $d \ge 3$, $k \ge 2$ and any field $\F$, that every order-$d$ tensor with slice rank $k$ has a number of minimal-length decompositions which is bounded above by some function of $d,k,\F$. A boundedness statement of this kind may only hold up to this type of transformation.}

\emph{If $T$ is an order-$3$ tensor with a more general slice rank decomposition \begin{equation} T(x,y,z) = \sum_{i=1}^r a_i(x) b_i(y,z) + \sum_{j=1}^s c_j(x) d_j(y,z) + \sum_{k=1}^t e_k(z) f_k(x,y) \label{order-3 slice rank decomposition} \end{equation} is a decomposition of $T$, then whenever $(i,j) \in [r] \times [s]$ and $e: [n_3] \to \F$ is a function, replacing $b_i$ and $d_j$ by respectively \[b_i + c_j \otimes e \text{ and } d_j - a_i \otimes e\] leads to a new decomposition of $T$.}

\emph{Returning to the case of general $d \ge 3$, if \[T(x_1,\dots,x_d) = a_1(x_1) b_1(\overline{x_1}) + \dots + a_d(x_d) b_d(\overline{x_d})\] is a slice rank decomposition of $T$, then for any function $c: [n_3] \times \dots \times [n_d]$, replacing $b_1$ and $b_2$ by respectively \[b_1 + a_2 \otimes c \text{ and } b_2 - a_1 \otimes c\] leads to a new decomposition of $T$.}

\emph{Combining both axes of generality, if \[T(x_1, \dots, x_d) = \sum_{j=1}^d \sum_{i=1}^{r_j} a_{j,i}(x_j) b_{j,i}(\overline{x_j})\] is a slice rank decomposition of $T$, $c: [n_3] \times \dots \times [n_d]$ is a function, and $i_1 \in [r_1]$, $i_2 \in [r_2]$ are two indices, then replacing $b_{1,i_1}$ and $b_{2,i_2}$ by respectively \[b_{1,i_1} + a_{2,i_2} \otimes c \text{ and } b_{2,i_2} - a_{1,i_1} \otimes c\] leads to a new decomposition of $T$. The roles of either of the first two coordinates may of course be exchanged with those of the other coordinates.} \end{construction}

\begin{construction} \label{Construction 5} \emph{We can generalise Construction \ref{Construction 4} further. Let us begin by describing a variant of Construction \ref{Construction 4}, in the simplest setting. If we have the decomposition \[T(x,y,z) = a(x) b(y,z) + c(y) d(x,z) + e(z) f(x,y),\] then the functions \begin{align*} b’ &= b + \lambda_1 c \otimes e\\
d’ &= d + \lambda_2 a \otimes e\\
f’ &= f + \lambda_3 a \otimes c \end{align*} for some $\lambda_1, \lambda_2, \lambda_3 \in \F$ adding up to $0$ satisfy \[T(x,y,z) = a(x) b’(y,z) + c(y) d’(x,z) + e(z) f’(x,y).\]}

\emph{We note that this transformation reduces to successively applying three of the transformations from Construction \ref{Construction 4}, since we can always find $\mu_{12,1}$, $\mu_{12,2}$, $\mu_{13,1}$, $\mu_{13,3}$, $\mu_{23,2}$, $\mu_{23,3} \in \F$ satisfying \begin{align*} \mu_{12,1} + \mu_{12,2} = 0 &\text{ \space \space \space \space \space \space \space \space \space \space} \lambda_1 = \mu_{12,1} + \mu_{13,1} \\
\mu_{13,1} + \mu_{13,3} = 0 &\text{ \space \space \space \space \space \space \space \space \space \space} \lambda_2 = \mu_{12,2} + \mu_{23,2} \\
\mu_{23,2} + \mu_{23,3} = 0 &\text{ \space \space \space \space \space \space \space \space \space \space} \lambda_3 = \mu_{13,3} + \mu_{23,3}.\end{align*} We may for instance choose $\mu_{13,1}$ and $\mu_{13,3}$ to be zero, and $\mu_{12,1}, \mu_{12,2}, \mu_{23,2}, \mu_{23,3}$ are then uniquely determined.}

\emph{If \eqref{order-3 slice rank decomposition} is a more general decomposition of an order-$3$ tensor $T$, then whenever $(i,j,k) \in [r] \times [s]\times [t]$ and $\lambda_1, \lambda_2, \lambda_3 \in \F$ satisfy \[\lambda_1 + \lambda_2 + \lambda_3 = 0,\] replacing $b_i, d_j, f_k$ by respectively \begin{align*} b_i & + \lambda_1 c_j \otimes e_k \\ d_j & + \lambda_2 a_i \otimes e_k \\ f_k & + \lambda_3 a_i \otimes c_j \end{align*} leads to a new decomposition of $T$.}

\emph{In the case of general $d \ge 3$, if \eqref{slice rank decomposition} is a decomposition of $T$, then we can choose some index $i_j$ for each $j \in [d]$, and choose some $\lambda_1, \dots \lambda_d \in \F$ satisfying \[\lambda_1 + \dots + \lambda_d = 0.\] Replacing the functions $b_{j,i_j}$ for each $j \in [d]$ by the functions \[b_{j,i_j}'(\overline{x_j}) = b_{j,i_j}(\overline{x_j}) + \lambda_j a_{1,i_1}(x_1) \dots a_{j-1,i_{j-1}}(x_{j-1}) a_{j+1,i_{j+1}}(x_{j+1}) \dots a_{d,i_d}(x_d)\] then provides a new decomposition of $T$.}

\emph{However, we can extend this class of transformations yet further: we can choose successively a subset $J$ of $[d]$ with size at least $2$, some indices $i_j \in [r_j]$ for each $j \in J$, some functions $c_j: \prod_{j \in [d] \setminus J} [n_j] \to \F$ for each $j \in J$ (instead taken to be a elements of $\F$ if $J = [d]$) satisfying \[\sum_{j \in J} c_j = 0,\] and replace for each $j \in J$ the function $b_{j,i_j}$ by the function $b_{j,i_j}'$ defined by \[b_{j,i_j}'(\overline{x_j}) = b_{j,i_j}(\overline{x_j}) + (\prod_{j' \in J \setminus \{j\}} a_{j',i_{j'}}(x_{j'})) c_j(x([d] \setminus J)). \] This yet again provides a new decomposition of $T$.}

\emph{Just as in the case of order-$3$ tensors, for general $d \ge 3$ and $2 \le |J| \le d$ doing this transformation ultimately reduces to successively applying transformations from Construction \ref{Construction 4} several times. For instance, writing $J = \{j_1, \dots, j_s\}$, there is a unique way of performing them while only using the $s-1$ pairs of indices $\{j_1, j_2\}, \dots, \{j_{s-1}, j_s\}$. Nonetheless, we single out Construction \ref{Construction 5} separately, as in our proofs we will think of the corresponding transformation as a single transformation rather than as a succession of several transformations from Construction \ref{Construction 4}.}

\end{construction}

Any of the five constructions that we have just described may be combined together: as shown by Gowers in \cite{Gowers}, the slice rank of a block diagonal tensor is equal to the sum of the slice ranks of the diagonal blocks, so putting together minimal-length decompositions of the five examples each respectively corresponding to a tensor $T_1, \dots, T_5$ leads to a minimal-length decomposition of the diagonal sum \[T_1 \oplus \dots \oplus T_5,\] and this is still the case after modifying any of the decompositions of $T_1, \dots, T_5$ using Constructions \ref{Construction 1} to \ref{Construction 5} respectively.

\section{Statements of main results} \label{Section: Statements of main results}

The previous section was devoted to constructions illustrating how a variety of different slice rank decompositions may arise for the same tensor. In the present section we describe our main results, which will be in the converse direction and show that to some extent, the set of minimal-length slice rank decompositions of the same tensor cannot be too rich. We will do so separately for the $(d-1)$-variable functions and for the one-variable functions of minimal-length decompositions. We will go through the following three steps, corresponding respectively to the proofs of the three main results of this paper, Proposition \ref{order-$d$ decompositions of zero}, Theorem \ref{At most d decompositions}, and Theorem \ref{Main theorem}.

\begin{enumerate}

\item We will begin by showing that if two decompositions have the same one-variable functions, then any differences between the two decompositions arise from Construction \ref{Construction 4} and Construction \ref{Construction 5}. 

\item After this, we will show a statement which in particular implies that no more than $d$ decompositions can have their sets of one-variable functions in the $j$th coordinate be jointly linearly independent for every $j \in [d]$ simultaneously.

\item This result will allow us to deduce that, in the finite field case, there are only a bounded number of possibilities for the one-variable functions, with a bound depending only on the order of the tensor, on its slice rank, and on the size of the finite field.

\end{enumerate}

In summary, we can partition the decompositions according to their one-variable functions in a bounded number of sets, and inside each of these sets we can then completely describe the possibilities for the $(d-1)$-variable functions once we know what these are for one of the decompositions from the set.

It may seem slightly surprising that we begin with and use fewer steps to obtain a satisfactory result with the $(d-1)$-variable functions than we do with the one-variable functions. That this is the case is due to the fact that whenever we compare two decompositions in the part of our argument studying the former, we always assume that the one-variable functions are the same, whereas while studying the latter we do not make any assumption on the $(d-1)$-variable functions.

Let us now go through these steps more formally. Although the following definition is a bit long, all that it describes is differences between the original and the new $(d-1)$-variable functions before and after successively applying Construction \ref{Construction 4} and Construction \ref{Construction 5} as much as can be done. If $J$ is a subset of $[d]$, then for simplicity of notation we will denote a sequence $(i_j)_{j \in J}$ by $i_J$.

\begin{definition} \label{zero form} Let $d \ge 3$ be an integer, and let $r_1, \dots, r_d$ be nonnegative integers. We say that a decomposition \[\sum_{j=1}^d \sum_{i=1}^{r_j} a_i(x_j) b_i(\overline{x_j})\] is in \emph{zero form} if there exist functions \[c_{J,j,i,i_{J \setminus \{j\}}}: \prod_{j'' \in [d] \setminus J} [n_{j''}] \to \F \] (instead taken to be elements of $\F$ if $J = [d]$) for every subset $J \subset [d]$ with $2 \le |J| \le d$, every $j \in [d]$, every $i \in [r_j]$ and every $i_{J \setminus \{j\}}$ satisfying the following two properties. \begin{enumerate}
\item For every $j \in [d]$ and every $i \in [r_j]$ we can write \[ b_{j,i}(\overline{x_j}) = \sum_{J \subset [d]: j \in J} \sum_{i_{J \setminus \{j\}}} \left( \prod_{j' \in J \setminus \{j\}} a_{j', i_{j'}}(x_{j'}) \right) c_{J,j,i,i_{J \setminus \{j\}}}(x([d] \setminus J)). \] 
\item For every $J \subset [d]$ with $2 \le |J| \le d$ and every $(i_{j'})_{j' \in J}$, we obtain a sum of $0$ whenever we take the sum over all functions $c_{J,j,i,i_{J \setminus \{j\}}}$ where the sequence $i_{J \setminus \{j\}}$ completed to a sequence indexed by $J$ by introducing the additional term $i_j=i$ is equal to $(i_{j'})_{j' \in J}$. \end{enumerate} \end{definition}

Given this definition, the first component of our results can be easily formulated. It can be checked that any decomposition that is in zero form adds up to the zero tensor, and our first result is a converse of that. We assume as usual that the functions $a_{j,i}$ of decompositions are linearly independent for each $j \in [d]$.

\begin{proposition} \label{order-$d$ decompositions of zero} Let $d \ge 3$ be an integer, and let $r_1, \dots, r_d$ be nonnegative integers. If a decomposition \[\sum_{j=1}^d \sum_{i=1}^{r_j} a_{j,i}(x_j) b_{j,i}(\overline{x_j})\] is equal to the zero tensor, then it is in zero form. \end{proposition}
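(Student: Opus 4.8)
The plan is to argue by induction on $d$, peeling off one special coordinate at a time, but it is cleaner to set things up as a single simultaneous argument on the collection of all $(d-1)$-variable functions $b_{j,i}$. The starting point is the following extraction principle coming from dual functions: if $(a_{j,i})_{i\in[r_j]}$ is linearly independent for each $j\in[d]$, then for any $j\in[d]$ and $i\in[r_j]$, contracting the identity $\sum_{j'}\sum_{i'}a_{j',i'}(x_{j'})b_{j',i'}(\overline{x_{j'}})=0$ against $a_{j,i}^*$ in the $x_j$-variable isolates $b_{j,i}(\overline{x_j})$ as a sum of contractions of the other terms against $a_{j,i}^*$. Iterating this for several coordinates at once and using that $(\otimes_{j\in J}a_{j,i_j}^*).(\otimes_{j\in J}a_{j,i_j'})=\prod_{j\in J}1_{i_j=i_j'}$, we can read off, for each subset $J\ni j$ with $|J|\ge 2$ and each choice of indices $i_{J\setminus\{j\}}$, a candidate ``coefficient function'' $c_{J,j,i,i_{J\setminus\{j\}}}$ on the coordinates outside $J$. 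Concretely these will be obtained by fully contracting the vanishing identity against $a_{j,i}^*$ in coordinate $j$ and against $a_{j',i_{j'}}^*$ in the coordinates $j'\in J\setminus\{j\}$; the residual object lives on $\prod_{j''\notin J}[n_{j''}]$, as required, and is an element of $\F$ when $J=[d]$.

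First I would prove property (1) of Definition \ref{zero form}, i.e.\ that with this choice of the $c$'s each $b_{j,i}$ admits the claimed expansion $b_{j,i}(\overline{x_j})=\sum_{J\ni j}\sum_{i_{J\setminus\{j\}}}\big(\prod_{j'\in J\setminus\{j\}}a_{j',i_{j'}}(x_{j'})\big)c_{J,j,i,i_{J\setminus\{j\}}}(x([d]\setminus J))$. This is where the vanishing of the whole decomposition is used: fix $j$ and $i$, contract the relation $\sum_{j'}\sum_{i'}a_{j',i'}b_{j',i'}=0$ against $a_{j,i}^*$ in coordinate $j$, which gives $b_{j,i}(\overline{x_j})=-\sum_{j'\ne j}\sum_{i'}(a_{j,i}^*.a_{j',i'})(x_j)\,b_{j',i'}(\ldots)$ viewed appropriately, where the contraction only touches the $x_j$-slot of $b_{j',i'}$. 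Now on each resulting term $a_{j',i'}(x_{j'})\,(a_{j,i}^*.b_{j',i'})(\overline{x_j,x_{j'}})$, recurse: decompose $(a_{j,i}^*.b_{j',i'})$ in the $x_{j''}$-variables for $j''\notin\{j,j'\}$ by repeatedly inserting $\sum_{i''}a_{j'',i''}(x_{j''})a_{j'',i''}^*$ together with a complementary ``remainder'' that has been fully contracted. Organising the terms by the set $J$ of coordinates that have been ``spent'' on $a$-factors (always including $j$), one gets exactly the sum over $J\ni j$ in property (1), with the innermost fully-contracted remainders being the $c_{J,j,i,i_{J\setminus\{j\}}}$. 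Keeping careful track of which contractions have been performed is the bookkeeping heart of the argument; doing it by induction on $d$ (treating one coordinate outside $\{j\}$ at a time) keeps the indexing manageable.

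Next I would prove property (2): for each $J$ with $2\le |J|\le d$ and each index tuple $(i_{j'})_{j'\in J}$, the sum $\sum_{j\in J}c_{J,j,i_j,(i_{j'})_{j'\in J\setminus\{j\}}}$ vanishes (where in the $j$-th summand the roving index $i$ is specialised to $i_j$). This should fall out of the \emph{symmetry} of the construction: the object $c_{J,j,i_j,(i_{j'})_{j'\in J\setminus\{j\}}}$ is, up to sign, the full contraction of the vanishing relation against $\bigotimes_{j'\in J}a_{j',i_{j'}}^*$, but where only the term indexed by $(j,i_j)$ contributes the ``$a$-factor in coordinate $j$'' and the remaining $a_{j',i_{j'}}^*$ with $j'\ne j$ act on the $b$-part. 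Summing over $j\in J$ reconstitutes the full contraction $\big(\bigotimes_{j'\in J}a_{j',i_{j'}}^*\big).\big(\sum_{j'}\sum_{i'}a_{j',i'}b_{j',i'}\big)=\big(\bigotimes_{j'\in J}a_{j',i_{j'}}^*\big).0=0$, because the only slices of the decomposition that survive contraction against all of $a_{j',i_{j'}}^*$, $j'\in J$, are precisely those with special coordinate in $J$, and each such contributes exactly one of the $d=|J|$ terms in the sum. Care is needed with signs and with the terms whose special coordinate lies outside $J$ (these must be checked to vanish under the contraction, which they do since for such a term the $b$-part is independent of at least one coordinate in $J$ and the corresponding $a^*$ contracts an $a$-function that is, after the other contractions, a scalar multiple — this needs the linear independence assumption to force the scalar pattern).

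The step I expect to be the main obstacle is property (1): making the recursive expansion over subsets $J$ precise without the notation collapsing, in particular verifying that the remainders one is left with after contracting out the coordinates of $J\setminus\{j\}$ genuinely depend only on $x([d]\setminus J)$ and are the \emph{same} functions (up to the sign convention) as those appearing in property (2) when one instead starts the contraction from a different coordinate of $J$. Establishing this ``consistency'' of the $c_{J,\cdot,\cdot,\cdot}$ across the choice of starting coordinate is exactly what makes property (2) an immediate consequence rather than a separate computation, so I would invest in stating a clean lemma that defines $c_{J,j,i,i_{J\setminus\{j\}}}$ directly as an iterated contraction $\big(\bigotimes_{j'\in J\setminus\{j\}}a_{j',i_{j'}}^*\big).\big(a_{j,i}^*.\,b_{j,i}\big)$ up to sign, prove that this equals the remainder term extracted in the property-(1) expansion, and then note that both (1) and (2) are formal consequences of expanding $0=\sum_{j'}\sum_{i'}a_{j',i'}b_{j',i'}$ against the appropriate tensor products of dual functions.
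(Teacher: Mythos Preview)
Your proposal has a genuine gap in the argument for property (2), and relatedly the ``direct contraction'' definition of the $c$'s does not work as stated.

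You claim that when you contract the vanishing relation against $\bigotimes_{j'\in J}a_{j',i_{j'}}^*$, the terms with special coordinate $j''\notin J$ vanish. This is false: for such a term, $b_{j'',i''}$ depends on \emph{all} coordinates in $\overline{x_{j''}}$, which in particular includes every coordinate in $J$. The contraction therefore yields $a_{j'',i''}(x_{j''})\big[(\bigotimes_{j'\in J}a_{j',i_{j'}}^*).b_{j'',i''}\big]$, which has no reason to vanish. Concretely, in the $d=3$ case with $J=\{1,2\}$ one computes $c_j^*.b_i + a_i^*.d_j = -\sum_k e_k\,[(a_i^*\otimes c_j^*).f_k]$, not zero. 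So with $c_{J,j,i,i_{J\setminus\{j\}}} := (\bigotimes_{j'\in J\setminus\{j\}}a_{j',i_{j'}}^*).b_{j,i}$, property (2) fails for every $J\subsetneq [d]$. Property (1) also fails: the expansion $\sum_{J\ni j}\sum_{i_{J\setminus\{j\}}}(\prod a_{\cdot})c_{J,\cdot}$ with these $c$'s overcounts the cross terms (already in $d=3$ it reproduces $b_i$ plus an extra $2\sum_{j,k}[e_k^*.p_{i,j}+c_j^*.q_{i,k}]\,c_j\otimes e_k$).

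The paper's proof is closer to your ``recursive'' sketch than to your ``clean lemma'', but the mechanism is different from what you describe. One starts with $b_{j,i}=\sum_{j'\ne j}\sum_{i'}a_{j',i'}\,p_{((j,i),(j',i'))}$, substitutes back, and \emph{gathers} terms with the same unordered pair $\{(j_1,i_1),(j_2,i_2)\}$ to obtain a new decomposition of zero. The gathered sum $p_{\{(j_1,i_1),(j_2,i_2)\}}$ is exactly the obstruction to property (2) at $|J|=2$; rather than vanishing, it gets absorbed into the $|J|=3$ layer at the next iteration. Repeating, one pushes all obstructions up to $|J|=d$, where the expansion becomes $\sum\lambda_{i_1,\dots,i_d}\,a_{1,i_1}\otimes\cdots\otimes a_{d,i_d}=0$ and linear independence of these products forces every $\lambda$ to be zero. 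The $c$'s that witness zero form are then read off from this process (with the residuals shifted into higher-$|J|$ coefficients), not defined directly as contractions.
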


The next theorem corresponds to the second step of the argument that we described at the beginning of this section. The key structure that we will use is that of sunflowers in a linear algebra sense: if $A^0,A^1,\dots,A^h$ are linear subspaces which are all jointly in direct sum for some positive integer $h$, then we say that the linear subspaces \[A^0 \oplus A^1, \dots, A^0 \oplus A^h\] constitute a \emph{sunflower} with \emph{center} $A^0$ and \emph{petals} $A^1,\dots,A^h$. Given bases of the linear subspaces $A^0,A^1,\dots,A^h$, we will refer to a basis of $A^0$ as a \emph{basis of center functions} and to bases of $A^1,\dots,A^h$ as \emph{bases of petal functions}.

What we will show is that if we can find $d+1$ decompositions $\t$ of the same tensor (not necessarily with minimal length) where for each $j \in [d]$ the linear subspaces spanned by the one-variable functions $a_{j,i}$ coming from the respective decompositions constitute a sunflower in the sense that we have just described, then the tensor has a slice rank decomposition where for every $j \in [d]$ the one-variable functions $a_{j,i}$ all belong to the center of the sunflower, and hence in particular the tensor has slice rank at most \[\dim A_1^0 + \dots + \dim A_d^0.\] We state this result for any $d \ge 2$: the $d=2$ case is relevant since as we shall discuss later in Section \ref{Section: The proof for order-$3$ tensors}, it will be used in the proof of the $d=3$ case.

\begin{theorem} \label{At most d decompositions} Let $d \ge 2$ be an integer, let $\F$ be a field, and let $T$ be an order-$d$ tensor over $\F$. Let $r_1^0,\dots, r_d^0$ be nonnegative integers, and let $a_{j,i}^0: [n_j] \to \F$ be one-variable functions for every $j \in [d]$ and every $i \in [r_j^0]$. Let $h>d$ be an integer, and assume that the following two conditions are satisfied. \begin{enumerate} \item For every $\t \in [h]$ there exist nonnegative integers $r_1^\t, \dots, r_d^\t$ and a decomposition of $T$ of the type \[\sum_{j=1}^d \left( \sum_{i=1}^{r_j^0} a_{j,i}^0(x_j) b_{j,i}^{0,\t}(\overline{x_j}) + \sum_{i=1}^{r_j^\t} a_{j,i}^{\t}(x_j) b_{j,i}^{\t}(\overline{x_j}) \right)\] for some one-variable functions $a_{j,i}^{\t}, [n_{j}] \to \F$ and for some $(d-1)$-variable functions $b_{j,i}^{0,\t}, b_{j,i}^{\t}: \prod_{j' \neq j} [n_{j'}] \to \F$. \item For each $j \in [d]$ the one-variable functions \[a_{j,1}^0, \dots, a_{j,r_j^0}^0, a_{j,1}^1, \dots, a_{j,r_j^1}^1, \dots, a_{j,1}^h, \dots, a_{j,r_j^h}^h: [n_j] \to \F \] are linearly independent. \end{enumerate} Then there exist $(d-1)$-variable functions $b_{j,i}^0: \prod_{j' \neq j} [n_{j'}] \to \F$ such that \[ \sum_{j=1}^d \sum_{i=1}^{r_j^0} a_{j,i}^0(x_j) b_{j,i}^0(\overline{x_j})\] is a decomposition of $T$ and in particular \[\sr T \le r_1^0 + \dots + r_d^0.\] \end{theorem}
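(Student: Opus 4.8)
## Proof Proposal

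The plan is to fix a choice of dual functions and use them to extract the ``center part'' of each decomposition, then show that the residual terms must cancel. Let me describe the approach for general $d \ge 2$, thinking of the $\theta$-th decomposition as splitting into a center piece (the $a_{j,i}^0$ terms) and a petal piece (the $a_{j,i}^\theta$ terms).

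First I would set up notation: for each $j \in [d]$, the functions $a_{j,1}^0,\dots,a_{j,r_j^0}^0$ together with all the $a_{j,i}^\theta$ over all $\theta \in [h]$ are linearly independent by hypothesis (2), so I may pick dual functions, writing $(a_{j,i}^0)^*$ for the duals to the center functions and $(a_{j,i}^\theta)^*$ for the duals to the petal functions, chosen within the span of all these one-variable functions so that every dual kills every non-matching function in the list. Now subtract the $\theta=1$ decomposition from the $\theta=2$ decomposition: the difference is the zero tensor, and it is a slice rank decomposition whose one-variable functions in coordinate $j$ are $a_{j,i}^0$ (appearing with coefficient $b_{j,i}^{0,2}-b_{j,i}^{0,1}$), together with $a_{j,i}^1$ and $a_{j,i}^2$. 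More usefully, I would apply Proposition~\ref{order-$d$ decompositions of zero} to a zero tensor built from \emph{all} of the decompositions at once — or iteratively compare them in pairs — to control the $(d-1)$-variable functions attached to the petal functions $a_{j,i}^\theta$.

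The key step is the following. Consider the $(d-1)$-variable function $b_{j,i}^\theta$ attached to the petal function $a_{j,i}^\theta$ (for $\theta \in [h]$, $j \in [d]$, $i \in [r_j^\theta]$). I want to show that after adjusting the center $(d-1)$-variable functions appropriately, all of these can be taken to be expressible purely in terms of petal functions from \emph{other} coordinates $j' \ne j$ and from the \emph{same} $\theta$ — i.e., they live in a ``diagonal petal'' part. The mechanism: apply a dual function $\bigl(\bigotimes_{j'} a_{j',i_{j'}}\bigr)^*$ in suitable coordinates to the identity ``$T - T = 0$'' obtained from comparing decomposition $\theta$ with decomposition $\theta'$, and read off that certain cross-terms between petals of distinct colors $\theta \ne \theta'$ must vanish. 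Since $h > d$, a counting/pigeonhole argument then forces enough of these petal-$b$ functions to collapse: there are $d+1$ or more ``colors'' but only $d$ coordinate-directions in which a petal-$b$ function can ``point'', so for each color $\theta$ there is a coordinate $j(\theta)$ in which the petal contribution can be absorbed, and summing the resulting telescoping corrections into the center functions $b_{j,i}^0 := b_{j,i}^{0,\theta_0}$ (for some fixed reference $\theta_0$) plus the accumulated absorbed terms yields a genuine decomposition of $T$ using only the center one-variable functions $a_{j,i}^0$.

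I expect the main obstacle to be exactly this combinatorial heart: making precise how the $h > d$ hypothesis is used. Naively one has, for each ordered pair of distinct colors $(\theta,\theta')$ and each pair of coordinates, a relation among the petal-$b$'s; the challenge is to organize these into a consistent ``assignment of each color to a coordinate'' so that the absorptions don't conflict — this is where the inequality $h > d$ (rather than $h \ge d$) must bite, presumably via a Hall-type or greedy argument showing that $d$ colors can each be ``parked'' in a distinct coordinate but a $(d+1)$-st cannot, forcing the residual petal mass in that last color to be zero outright. A secondary technical nuisance will be bookkeeping with the zero-form structure from Proposition~\ref{order-$d$ decompositions of zero}: its condition (2) (the vanishing of sums of the $c_{J,j,i,i_{J\setminus\{j\}}}$ over matching index sequences) is what guarantees the corrections one adds to the center $b$'s are consistent across coordinates, and translating between ``zero form'' data and the petal-$b$ functions above will require care. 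I would first write the whole argument cleanly for $d=2$ (where there are no genuine $(d-1)$-variable subtleties and Proposition~\ref{order-$d$ decompositions of zero} is vacuous-ish), then $d=3$, then abstract the pattern.
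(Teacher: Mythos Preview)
Your proposal has a genuine gap at exactly the point you flag as the ``combinatorial heart'': the mechanism by which $h>d$ is used is not a Hall-type or pigeonhole assignment of colors to coordinates, and I do not see how to make such an argument work. The actual role of the inequality $h>d$ is inductive. Fix a petal function $a_{j,i}^{1}$ and apply its dual to the equality between decomposition $1$ and decomposition $\theta'$ for each $\theta' \neq 1$. This yields, for each $\theta' \in \{2,\dots,h\}$, a slice rank decomposition of the \emph{same} order-$(d-1)$ tensor $b_{j,i}^{1}$, in which the center one-variable functions are the $a_{j',i'}^{0}$ together with the $a_{j',i'}^{1}$, and the petal one-variable functions are the $a_{j',i'}^{\theta'}$. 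There are $h-1 \ge d > d-1$ such decompositions, which is precisely the hypothesis needed to invoke the theorem for order $d-1$. This is what forces $b_{j,i}^{1}$ to lie in
\[
\sum_{j' \neq j} (A_{j'}^{0} \oplus A_{j'}^{1}) \otimes \Bigl(\bigotimes_{j'' \notin \{j,j'\}} \F^{n_{j''}}\Bigr),
\]
i.e.\ the ``same-$\theta$ plus center'' structure you were aiming for. The inequality $h>d$ is consumed by the induction, not by a matching argument.

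A second issue is your reliance on Proposition~\ref{order-$d$ decompositions of zero}. That proposition and the present theorem are proved independently; the zero-form description does not straightforwardly control the petal $(d-1)$-variable functions in the way you need, because the functions $c_{J,j,i,i_{J\setminus\{j\}}}$ in Definition~\ref{zero form} are only constrained by a sum-to-zero condition, not individually bounded into any particular subspace. The paper instead proceeds by a splitting process: after the inductive step above establishes the ``same-$\theta$ plus center'' form of each $b_{j,i}^{\theta}$, one substitutes back, writes the equality between (say) decompositions $1$ and $2$, and repeatedly applies the lower-order theorem to the resulting $(d-m)$-variable unspecified functions for $m=2,3,\dots,d-1$, at each stage pushing terms with a center factor into the target subspace $R_{\mathrm{target}}$ and keeping the remaining terms segregated by color. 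At the final stage everything is a linear combination of pure tensor products $a_{1,i_1}^{\beta_1}\otimes\cdots\otimes a_{d,i_d}^{\beta_d}$, and linear independence kills the non-center terms. Your outline does not contain this iterated splitting, and without it the argument does not close.
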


We stress that in the assumptions of Theorem \ref{At most d decompositions} the decompositions are not required to have minimal length. The inequality $h > d$ is optimal, as can be seen from either Construction \ref{Construction 2} or Construction \ref{Construction 3}.

We note that there is a large class of examples where the subspaces $A_1^\t, \dots, A_d^\t$ are respectively the same for every minimal-length decomposition of $T$. Informally, this is the class of order-$d$ tensors which admit a minimal-length slice rank decomposition where the $(d-1)$-variable functions of each kind are sufficiently separated with respect to the slice rank of order-$(d-1)$ tensors.

\begin{proposition}\label{Example with same subspaces} Let $d \ge 3$, $k \ge 1$ be integers, and let $T$ be an order-$d$ tensor with slice rank $k$. Assume that there exist nonnegative integers $r_1, \dots, r_d$ satisfying \[r_1+\dots+r_d = k,\] and a decomposition \[\sum_{j=1}^d \sum_{i=1}^{r_j} a_{j,i}(x_j) b_{j,i}(\overline{x_j})\] of $T$ such that for every $j \in [d]$ the order-$(d-1)$ tensors $b_{j,1}, \dots, b_{j,r_j}: \prod_{j’ \neq j} [n_j] \to \F$ satisfy \begin{equation} \sr \left( \sum_{i=1}^{r_j} \lambda_i b_{j,i} \right) \ge 2k \label{Lower bound on order-(d-1) slice rank} \end{equation} for every $(\lambda_1, \dots, \lambda_{r_j}) \in \F^{r_j} \setminus \{0\}$ and for every $j \in [d]$. Then the linear subspaces \[A_j = \langle a_{j,i}: i \in [r_j] \rangle\] with $j \in [d]$ are such that whenever \[\sum_{j=1}^d \sum_{i=1}^{r_j’} a_{j,i}’(x_j) b_{j,i}’(\overline{x_j})\] is a slice rank decomposition of $T$ satisfying $r_1’+\dots+r_d’ = k$, we have $r_j’=r_j$ and \[A_j = \langle a_{j,i}': i \in [r_j] \rangle \] for every $j \in [d]$. \end{proposition}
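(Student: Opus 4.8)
Here is how I would prove Proposition \ref{Example with same subspaces}.

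The plan is to reduce everything to the single family of inclusions $A_j \subseteq A'_j$ for $j \in [d]$, where I write $A'_j := \langle a'_{j,i} : i \in [r'_j] \rangle$. Both displayed decompositions of $T$ have length $k = \sr T$, hence minimal length, so in each coordinate $j$ the functions $a_{j,1}, \dots, a_{j,r_j}$ are linearly independent and so are $a'_{j,1}, \dots, a'_{j,r'_j}$; in particular $\dim A_j = r_j$ and $\dim A'_j = r'_j$. Once $A_j \subseteq A'_j$ has been proved for every $j$ we get $r_j = \dim A_j \le \dim A'_j = r'_j$, and since $\sum_j r_j = k = \sum_j r'_j$ all of these inequalities are equalities; then $A_j \subseteq A'_j$ together with $\dim A_j = r_j = r'_j = \dim A'_j$ forces $A_j = A'_j$, which is the assertion. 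So it suffices to prove each inclusion.

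The main device is contraction in the distinguished coordinate. Fix $j \in [d]$ with $r_j \ge 1$ (if $r_j = 0$ then $A_j = \{0\} \subseteq A'_j$ trivially), and for a function $v : [n_j] \to \F$ consider the order-$(d-1)$ tensor $v.T$ on $\prod_{j' \neq j} [n_{j'}]$ given by $(v.T)(\overline{x_j}) = \sum_{x_j} v(x_j) T(x_1, \dots, x_d)$. Splitting the assumed decomposition of $T$ into its $j$-th part and a remainder, $T = \sum_{i=1}^{r_j} a_{j,i}(x_j) b_{j,i}(\overline{x_j}) + R_j$ with $R_j = \sum_{j' \neq j} \sum_{i=1}^{r_{j'}} a_{j',i}(x_{j'}) b_{j',i}(\overline{x_{j'}})$, contraction gives $v.T = \sum_{i=1}^{r_j} (v.a_{j,i}) b_{j,i} + v.R_j$. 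For each $j' \neq j$ the function $b_{j',i}$ depends on the coordinate $j$, so $v.b_{j',i}$ is a function on $[d] \setminus \{j,j'\}$ and the summand $a_{j',i}(x_{j'}) (v.b_{j',i})$ is a single slice of $v.T$ in the coordinate $j'$; hence $\sr(v.R_j) \le \sum_{j' \neq j} r_{j'} = k - r_j$. By subadditivity of the slice rank, $\sr(v.T) \ge \sr\big( \sum_{i=1}^{r_j} (v.a_{j,i}) b_{j,i} \big) - (k - r_j)$. Now $\sum_i (v.a_{j,i}) b_{j,i}$ is the zero tensor exactly when $v.a_{j,i} = 0$ for every $i$, i.e. (in the terminology used from now on) when $v$ is orthogonal to $A_j$ --- here I use that the $b_{j,i}$ are linearly independent, which is immediate from \eqref{Lower bound on order-(d-1) slice rank} --- and when $v$ is not orthogonal to $A_j$ this is a nonzero linear combination of the $b_{j,i}$, so \eqref{Lower bound on order-(d-1) slice rank} forces its slice rank to be at least $2k$. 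Therefore, if $v$ is not orthogonal to $A_j$, then $\sr(v.T) \ge 2k - (k - r_j) = k + r_j$.

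Running the identical splitting and contraction on the second decomposition, $T = \sum_{i'=1}^{r'_j} a'_{j,i'}(x_j) b'_{j,i'}(\overline{x_j}) + R'_j$, yields $v.T = \sum_{i'=1}^{r'_j} (v.a'_{j,i'}) b'_{j,i'} + v.R'_j$ with $\sr(v.R'_j) \le k - r'_j$; in particular, if $v$ is orthogonal to $A'_j$, then $\sr(v.T) \le k - r'_j$. Suppose now some $v : [n_j] \to \F$ were orthogonal to $A'_j$ but not to $A_j$. Combining the two estimates we would get $k + r_j \le \sr(v.T) \le k - r'_j$, hence $r_j + r'_j \le 0$, which is impossible since $r_j \ge 1$. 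So every $v$ orthogonal to $A'_j$ is orthogonal to $A_j$; since the pairing $a.a'$ on $\F^{n_j}$ is non-degenerate, this is equivalent to $A_j \subseteq A'_j$. Together with the reduction of the first paragraph, this completes the argument.

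I do not expect a genuine obstacle here. The one point worth flagging is that the argument is deliberately asymmetric: the separation hypothesis \eqref{Lower bound on order-(d-1) slice rank} enters only through the distinguished decomposition, to force $\sr(v.T) \ge k + r_j$ on vectors not orthogonal to $A_j$, whereas the arbitrary decomposition contributes only the cheap bound $\sr(v.T) \le k - r'_j \le k < k + r_j$ on vectors orthogonal to $A'_j$. These two facts cannot both hold for a vector separating $A_j$ from $A'_j$, which is precisely what rules such a vector out and yields $A_j \subseteq A'_j$; the equality of the subspaces, and of the integers $r_j$ and $r'_j$, then comes only from comparing dimensions and uses no hypothesis on the second decomposition.
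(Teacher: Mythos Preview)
Your proof is correct and follows essentially the same approach as the paper: contract in a distinguished coordinate $j$ against a functional annihilating $A_j'$ but not $A_j$, use hypothesis \eqref{Lower bound on order-(d-1) slice rank} to force a slice-rank lower bound of at least $2k$ on the nonzero combination of the $b_{j,i}$, and derive a contradiction by counting the remaining slice-rank-$1$ summands from the two decompositions. The only cosmetic difference is that the paper writes the two contracted decompositions as a single equation and counts terms directly, whereas you route the same inequalities through upper and lower bounds on $\sr(v.T)$; the arithmetic and the conclusion are identical.
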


\begin{proof}

For every $j \in [d]$ we write $A_j$ and $A_j'$ for the linear subspaces \[\langle a_{j,1}, \dots, a_{j,r_j} \rangle \text{ and } \langle a_{j,1}, \dots, a_{j,r_j} \rangle\] respectively. If for some $j_0 \in [d]$ the subspace $A_{j_0}'$ does not contain $A_{j_0}$, then there exists a function $u:[n_{j_0}] \to \F$ such that $u.a = 0$ for every $a \in A_{j_0}'$ but such that $u.a \neq 0$ for some $a \in A_{j_0}$. Applying $u$ to both sides of the equality \[\sum_{j=1}^d \sum_{i=1}^{r_j} a_{j,i}(x_j) b_{j,i}(\overline{x_j}) = \sum_{j=1}^d \sum_{i=1}^{r_j'} a_{j,i}'(x_j) b_{j,i}'(\overline{x_j})\] we obtain \[\sum_{i=1}^{r_{j_0}} (u.a_{j_0,i}) b_{j_0,i}(\overline{x_{j_0}}) + \sum_{j \neq j_0} \sum_{i=1}^{r_j} a_{j,i}(x_j) (u.b_{j,i})(\overline{x_{j_0}, x_j}) = \sum_{j \neq j_0} \sum_{i=1}^{r_j'} a_{j,i}'(x_j) (u.b_{j,i}')(\overline{x_{j_0}, x_j}).\] Our assumption \eqref{Lower bound on order-(d-1) slice rank} shows that \[(u.a_{j_0,1}, \dots, u.a_{j_0, r_{j_0}}) \neq 0\] and hence that the first sum of the left-hand side has slice rank at least $2k$. All inner summands of all other sums each have slice rank at most $1$, and there are at most $2k-1$ such summands, which is a contradiction. Therefore, $A_{j}'$ contains $A_{j}$ for every $j \in [d]$. Since \[r_1 + \dots + r_d = k = r_1' + \dots + r_d',\] we have $A_j' = A_j$ for every $j \in [d]$. \end{proof}

From Proposition \ref{order-$d$ decompositions of zero} and Theorem \ref{At most d decompositions} we then deduce our theorem on the structure of minimal-length slice rank decompositions of tensors, which we now state.

\begin{theorem}\label{Main theorem} Let $d \ge 3$, $k \ge 1$ be integers, let $\F$ be a field, and let $T$ be an order-$d$ tensor over $\F$ and with slice rank equal to $k$. Then we have the following. \begin{enumerate}
\item If the field $\F$ is finite, then there exists a set $\ca$ of $d$-tuples $(W_1, \dots, W_d)$ of linear subspaces of $\F^{n_1}, \dots, \F^{n_d}$ respectively, with size $|\ca| \le d^k |\F|^{dk^2}$, such that if $r_1, \dots, r_d$ are nonnegative integers satisfying \[r_1+\dots+r_d=k\] and $a_{j,i}: [n_j] \to \F$ with $j \in [d]$ and $i \in [r_j]$ are functions satisfying \[T(x_1, \dots, x_d) = \sum_{j=1}^d \sum_{i=1}^{r_j} a_{j,i}(x_j) b_{j,i}(\overline{x_j})\] for some $(d-1)$-variable functions $b_{j,i}: \prod_{j' \neq j} [n_{j'}] \to \F$, then \[(\langle a_{1,1},\dots,a_{1,r_1} \rangle, \dots, \langle a_{d,1},\dots,a_{d,r_d} \rangle) \in \ca.\] In particular, there are at most $d^k |\F|^{2d k^2}$ possibilities in total for \[((r_1, \dots, r_d), (a_{1,1}, \dots, a_{1,r_1}), \dots, (a_{d,1}, \dots, a_{d,r_d})).\]
\item For any field $\F$, if two decompositions \begin{equation} T(x_1, \dots, x_d) = \sum_{j=1}^d \sum_{i=1}^{r_j} a_{j,i}(x_j) b_{j,i}^1(\overline{x_j}) = \sum_{j=1}^d \sum_{i=1}^{r_j} a_{j,i}(x_j) b_{j,i}^2(\overline{x_j}) \label{two decompositions} \end{equation} of $T$ have the same one-variable functions $a_{j,i}$, then the decomposition \begin{equation} \sum_{j=1}^d \sum_{i=1}^{r_j} a_{j,i}(x_j) (b_{j,i}^2 - b_{j,i}^1)(\overline{x_j}) \label{difference of decompositions} \end{equation} is in zero form.\end{enumerate} \end{theorem}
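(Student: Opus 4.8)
The second assertion is immediate from Proposition \ref{order-$d$ decompositions of zero}: subtracting the two expressions in \eqref{two decompositions} shows that \eqref{difference of decompositions} is a decomposition of the zero tensor, and since we assume throughout that the functions $a_{j,i}$ are linearly independent for each fixed $j$, that proposition applies directly and gives that \eqref{difference of decompositions} is in zero form. So the only real work is in the first assertion.

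For the first assertion, let $\ct$ be the set of $d$-tuples $(\langle a_{1,1},\dots,a_{1,r_1}\rangle,\dots,\langle a_{d,1},\dots,a_{d,r_d}\rangle)$ arising from length-$k$ decompositions of $T$; the task is to bound $|\ct|$. The plan is to encode a tuple $(A_1,\dots,A_d)$ with $A_j\subseteq\F^{n_j}$ as the single subspace $A_1\oplus\dots\oplus A_d$ of $\F^{n_1+\dots+n_d}$, which for a length-$k$ decomposition has dimension exactly $k$ and from which the original tuple can be recovered, so the encoding is injective on $\ct$. The point of the encoding is that since direct sums inside $\F^{n_1+\dots+n_d}$ decouple across the $d$ blocks, a family of tuples $(A_1^\t,\dots,A_d^\t)_\t$ for which there is a tuple $(C_1,\dots,C_d)$ such that for every $j$ the subspaces $A_j^\t$ form a sunflower with center $C_j$ corresponds exactly to a sunflower of the encoding subspaces $\bigoplus_j A_j^\t$ with center $\bigoplus_j C_j$. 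I would then invoke Theorem \ref{At most d decompositions} with $h=d+1$ to exclude such configurations inside $\ct$: if $d+1$ pairwise distinct tuples of $\ct$ did form, together with some tuple $(C_1,\dots,C_d)$, a sunflower in every coordinate, then rewriting each of the corresponding $d+1$ decompositions in a basis of $A_j^\t$ extending a fixed basis of $C_j$ (Construction \ref{Construction 1}) places us precisely in the hypotheses of Theorem \ref{At most d decompositions} — its condition (2) is exactly the coordinatewise sunflower property, which is independent of the chosen complements — and the conclusion would be $\sr T\le\dim C_1+\dots+\dim C_d$; but distinctness of the tuples forces $\dim C_{j_0}<\dim A_{j_0}^{\t_0}$ for some $j_0,\t_0$, whence $\sum_j\dim C_j<\sum_j\dim A_j^{\t_0}=k$, contradicting $\sr T=k$. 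Thus the image of $\ct$ under the encoding is a family of $k$-dimensional subspaces containing no $d+1$ distinct members that form a sunflower.

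The remaining step — the ``small sunflower'' input, and the main obstacle — is to bound the size of such a family. I would prove by induction on $r$ that a family of $r$-dimensional subspaces of any $\F^N$ with no $d+1$ distinct members forming a sunflower has size at most $|\F|^{d\binom{r+1}{2}}$: a maximal subfamily whose members are in direct sum has at most $d$ members (a direct sum of $d+1$ distinct nonzero subspaces is itself a forbidden sunflower, with center the zero subspace), so its span $W$ has dimension at most $dr$; by maximality every member of the family meets $W$ nontrivially and hence contains one of the at most $|\F|^{dr}$ lines of $W$; grouping the family according to such a line $\ell$ and passing to $\F^N/\ell$ turns each group into a family of $(r-1)$-dimensional subspaces that still avoids sunflowers of $d+1$ members (a sunflower modulo $\ell$ lifts to one whose center is enlarged by $\ell$), so the inductive hypothesis bounds each group. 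Taking $r=k$ gives $|\ct|\le|\F|^{d\binom{k+1}{2}}\le|\F|^{dk^2}$, so one may take $\ca=\ct$ (or any superset of the allowed size). For the final count in the first assertion, $(r_1,\dots,r_d)$ is determined by the dimensions of the $A_j$, and for fixed spans each ordered tuple $(a_{j,1},\dots,a_{j,r_j})$ ranges over at most $|\F|^{r_j^2}$ ordered bases of $A_j$, so the number of possibilities for $((r_j)_j,(a_{j,i})_{j,i})$ is at most $|\F|^{dk^2}\cdot|\F|^{\sum_j r_j^2}\le|\F|^{dk^2}\cdot|\F|^{k^2}\le|\F|^{2dk^2}$. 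The factor $d^k$ in the stated bounds merely accounts for the number of compositions of $k$ into $d$ parts and is not actually needed in this argument.
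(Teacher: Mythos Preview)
Your argument is correct. The second assertion is handled identically to the paper. For the first, both your proof and the paper's iterate Theorem \ref{At most d decompositions}: at each stage one bounds a maximal ``sunflower-independent'' subcollection by $d$ and then partitions the remainder according to a line contained in every member. The difference is in the packaging. The paper builds a depth-$k$ tree directly on the tuples $(A_1^\t,\dots,A_d^\t)$, at each level choosing a coordinate $j\in[d]$ and a vector in $\sum_{\t\in M}A_j^\t$, which produces the branching factor $d|\F|^{dk}$ and hence the bound $d^k|\F|^{dk^2}$. You instead encode the tuple as the single $k$-dimensional subspace $\bigoplus_j A_j^\t$ of $\F^{\sum n_j}$ and induct on its dimension $r$, quotienting by the chosen line at each step so that the branching factor drops to $|\F|^{dr}$; this yields the sharper bound $|\F|^{d\binom{k+1}{2}}\le|\F|^{dk^2}$ and eliminates the $d^k$ altogether. (Your aside that the paper's $d^k$ ``accounts for the number of compositions of $k$'' is not quite the right diagnosis --- it comes from the choice of coordinate $j_{\kappa+1}$ at each level of the tree --- but this does not affect your argument.) The one point that deserves an explicit sentence is the direction ``encoded sunflower $\Rightarrow$ coordinatewise sunflower'', which you assert but do not verify: since each $\bigoplus_j A_j^\t$ is block-diagonal, so is every pairwise intersection, hence the center decomposes as $\bigoplus_j C_j$; and the joint direct-sum condition on the quotients $\bigoplus_j(A_j^\t/C_j)$ restricts to each fixed block $j$ by padding a relation in that block with zeros in the others. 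With that check in place, your inductive bound on sunflower-free families of $r$-dimensional subspaces goes through cleanly.
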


In the second item of Theorem \ref{Main theorem}, the decompositions need not be assumed to have minimal length, even if like the first item it was conceived primarily with this case in mind. The following example shows that the exponents in the two bounds from the first item of Theorem \ref{Main theorem} can only be away from the optimal bounds by factors which are linear in $d$, uniformly in $k$ and $\F$.

\begin{example}\label{matrix times slice rank 2r example} \emph{Let $d \ge 4$ be an integer, let $r$ be a positive integer, let $n_1,\dots,n_d$ be large integers compared to $r$, let $M: [n_1] \times [n_2] \to \F$ be a rank-$2r$ matrix with a decomposition \[M(x_1,x_2) = \sum_{i=1}^{2r} a_i^1(x) a_i^2(y), \] let $c:[n_3] \times \dots \times [n_d] \to \F$ be a function, and let $T$ be the order-$d$ tensor defined by \[T(x_1,\dots, x_d) = M(x_1,x_2) c(x_3, \dots, x_d).\] If the function $c$ has slice rank at least 2r as an order-(d-2) tensor then the tensor $T$ has slice rank $2r$ (if instead $d=3$, then $T$ always has slice rank at most $1$). Then there are at least $\omega |\F|^{r^2}$ possibilities for the linear subspace $\langle a_1^1, \dots, a_r^1 \rangle$.}\end{example}

\begin{proof}

The decompositions \[ \sum_{i=1}^r a_i^1(x_1) (a_i^2 c)(x_2,x_3,\dots,x_d) + \sum_{i=r+1}^{2r} a_i^2(x_2) (a_i^1 c)(x_1,x_3,\dots,x_d)\] are decompositions of $T$ with length $2r$, and the linear subspace $\langle a_1^1, \dots, a_r^1 \rangle$ can be any dimension-$r$ linear subspace of $\langle a_1^1, \dots, a_{2r}^1 \rangle$. In turn, there are at least at least \[\omega |\F|^{(2r)^2} / |\F|^{2r^2} |\F|^{r^2}\] possibilities for this linear subspace, which provides the desired bound. It hence suffices to check that these decompositions indeed have minimal length, in other words that T has slice rank no less than 2r. Assume for contradiction that \[\sum_{i=1}^{r_1} a_{1,i}(x_1) b_{1,i}(\overline{x_1}) + \sum_{i=1}^{r_2} a_{2,i}(x_2) b_{2,i}(\overline{x_2}) + \sum_{j=3}^d \sum_{i=1}^{r_j} a_{j,i}(x_j) b_{j,i}(\overline{x_j})\] is a slice rank decomposition of $T$ satisfying \[r_1 + r_2 + r_3 + \dots + r_d < 2r.\] The linear subspace \[U=\{a_1^* \in \F^{n_1}: a_1^*.a_{1,1} = 0, \dots, a_1^*.a_{1,r_1} = 0\}\] has dimension $n-r_1$, so since $M$ has rank $2r$, the linear subspace \[\{a_1^*.M: a_1^* \in U\}\] has dimension at least $2r-r_1$, so is not contained inside the linear subspace $\langle a_{2,1}, \dots, a_{2,r_2} \rangle$. Therefore, we can find $a_1^* \in U$ and an element $a_2^*$ of \[\{a_2^* \in \F^{n_2}: a_2^*.a_{2,1} = 0, \dots, a_2^*.a_{2,r_2} = 0\}\] which furthermore satisfies \[(a_1^* \otimes a_2^*).M = a_2^*.(a_1^*.M) \neq 0.\] Applying $a_1^* \otimes a_2^*$ to the original decomposition of $T$ then provides a slice rank decomposition \[\l c(x_3, \dots, x_d)= \sum_{j=3}^d \sum_{i=1}^{r_j} a_{j,i}(x_j) ((a_1^* \otimes a_2^*).b_{j,i})(\overline{x_1,x_2,x_j})\] for some $\l \neq 0$, which contradicts that $\sr c \ge 2r$.\qedhere \end{proof}

\section{Deduction of the structure theorem on slice rank decompositions}\label{Section: Deduction of the structure theorem on slice rank decompositions}

In this section we deduce Theorem \ref{Main theorem} from Proposition \ref{order-$d$ decompositions of zero} and Theorem \ref{At most d decompositions}.

\begin{proof}[Proof of Theorem \ref{Main theorem}]

Let us begin by proving the second item. Because the two decompositions \eqref{two decompositions} are decompositions of the same tensor, their difference \eqref{difference of decompositions} is equal to the zero tensor, so it follows from Proposition \ref{order-$d$ decompositions of zero} that the decomposition \eqref{difference of decompositions} is in zero form.

There remains to prove the first item. Let $k$ be the slice rank of $T$. If $k=0$ then we are done, so let us assume $k \ge 1$. We consider an arbitrary set of minimal-length decompositions \[\sum_{j=1}^d \sum_{i=1}^{r_j^\t} a_{j,i}^{\t}(x_j) b_{j,i}^{\t}(\overline{x_j}) \] of $T$ indexed by $\t \in [h]$ for some positive integer $h$. For every $\t \in [h]$ we write $A_1^\t, \dots, A_d^\t$ for the linear subspaces spanned by the functions $a_{1,i}^\t, \dots, a_{d,i}^\t$ respectively. We assume that whenever $\t,\t'$ are distinct elements of $[h]$, the equality $A_j^\t = A_j^{\t'}$ fails for at least one $j \in [d]$.

Let $M_1$ be a maximal subset of $[h]$ such that for each $j \in [d]$ the linear subspaces $A_j^\t$ with $\t \in M_1$ are all in direct sum. By Theorem \ref{At most d decompositions} the set $M_1$ must have size at most $d$, as otherwise we would have $T=0$. We can hence write

\[[h] = \bigcup_{1 \le j_1 \le d} \bigcup_{w_1} \Theta_{(j_1,w_1)}\] where the union over $w_1$ is taken over all non-zero vectors of $\oplus_{\t \in M_1} A_{j_1}^\t$, and where for every $(j_1,w_1)$ and every $\t \in M_1$ the line containing $w_1$ is contained in $A_{j_1}^{\t}$.

We now fix $(j_1,w_1)$, and let $M_2$ be a maximal subset of $\Theta_{(j_1,w_1)}$ such that the linear subspaces $A_{j_1}^{\t}$ with $\t \in M_2$ have pairwise intersection $\langle w_1 \rangle$, and such that for every $j \neq j_1$ the linear subspaces $A_{j}^{\t}$ with $\t \in M_2$ are in direct sum. Again, by applying Theorem \ref{At most d decompositions} the set $M_2$ has size at most $d$, as otherwise we would have $\sr T \le 1$, and we can further write \[\Theta_{(j_1,w_1)} = \bigcup_{1 \le j_2 \le d} \bigcup_{w_2} \Theta_{(j_1,w_1),(j_2,w_2)}\] where the union over $w_2$ is taken over all non-zero vectors of $\oplus_{\t \in M_1} A_{j_2}^\t$ if $j_2 \neq j_1$ (resp. taken over all vectors linearly independent from $w$ if $j_2=j_1$), and where for every $\t \in \Theta_{(j_2,w_2)}$ the line containing $w_2$ is contained in $A_{j_2}^\t$ (resp. the plane containing $w_1,w_2$ is contained in $A_{j_1}^\t$).

Iterating further we obtain a tree structure with depth $k$, and with root $[h]$. Let us describe the inductive step of the process. Once the tree is constructed up to some depth $\kappa \le k$, a (depth-$\kappa$) leaf of the tree constructed at this point is a subset \[\Theta_{(j_1,w_1),(j_2,w_2), \dots, (j_\kappa,w_\kappa)}\] of $[h]$ such that there exist linear subspaces $W_1 \subset \F^{n_1} \dots, W_d \subset \F^{n_d}$ with \[\dim W_1 + \dots + \dim W_d = \kappa\] and such that $W_j \subset A_j^{\t}$ for every $j \in [d]$ and every $\t \in \Theta_{(j_1,w_1),(j_2,w_2), \dots, (j_\kappa,w_\kappa)}$. If $\kappa<k$, then let $M_{\kappa+1}$ be a maximal subset of $\Theta_{(j_1,w_1),(j_2,w_2), \dots, (j_\kappa,w_\kappa)}$ such that the linear subspaces $A_j^{\t}$ with $\t \in M_{\kappa+1}$ have pairwise intersection $W_j$ for every $j \in [d]$. Theorem \ref{At most d decompositions} then shows that $M_{\kappa+1}$ has size at most $d$ (as otherwise we would have $\sr T \le \kappa)$, and we can hence write \begin{equation} \Theta_{(j_1,w_1),(j_2,w_2), \dots, (j_\kappa,w_\kappa)} = \bigcup_{1 \le j_{\kappa+1} \le d} \bigcup_{w_{\kappa+1}} \Theta_{(j_1,w_1),(j_2,w_2), \dots, (j_\kappa,w_\kappa), (j_{\kappa+1}, w_{\kappa+1})}, \label{Union at the general step} \end{equation} where the union over $w_{\kappa+1}$ is over all vectors of \[(+_{\t \in M_{\kappa+1}} A_{j_{\kappa+1}}^\t) \setminus W_{j_{\kappa+1}}.\] We then take the sets $\Theta_{(j_1,w_1),(j_2,w_2), \dots, (j_\kappa,w_\kappa), (j_{\kappa+1}, w_{\kappa+1})}$ to be the immediate descendants of the set $\Theta_{(j_1,w_1),(j_2,w_2), \dots, (j_\kappa,w_\kappa)}$ in the tree.

If instead $\kappa=k$ then $\dim W_1 + \dots + \dim W_d$ and $\dim A_1^\t + \dots + \dim A_d^\t$ are both equal to $k$, so are equal to one another. Hence, once we reach depth $k$ the linear subspaces $A_1^\t, \dots, A_d^\t$ are all completely determined by $W_1, \dots, W_d$. By our initial assumption that for any distinct $\t, \t' \in [h]$ we have $A_j^{\t} \neq A_j^{\t'}$ for some $j \in [d]$, the sets $\Theta_{(j_1,w_1),(j_2,w_2), \dots, (j_\kappa,w_\kappa)}$ each have size at most $1$.

We have obtained a tree with depth $k$, and each node of the tree has at most $d |\F|^{dk}$ immediate descendants, so the number of leaves, and hence of $d$-tuples of linear subspaces $(A_1^\t, \dots, A_d^\t)$ is at most $(d |\F|^{dk})^k = d^k |\F|^{dk^2}$. Furthermore, for each choice of a $d$-tuple of subspaces $(A_1^{\t}, \dots, A_d^{\t})$ with respective dimensions $r_1, \dots, r_d$, by the number \eqref{number of decompositions of matrices} of decompositions of matrices there are at most \[|\F|^{d r_1^2} \dots |\F|^{d r_d^2} \le |\F|^{d k^2}\] choices for \[((a_{1,1}, \dots, a_{1,r_1}), \dots, (a_{d,1}, \dots, a_{d,r_d})).\] This finishes the proof of the first item of Theorem \ref{Main theorem}. \qedhere \end{proof}

\section{Simpler analogue for tensor rank decompositions}\label{Section: the tensor rank case}

In this section we prove a simpler analogue of Theorem \ref{Main theorem} in the case of the tensor rank, which is much more similar to the statement for matrices that we discussed in the introduction.

\begin{proposition}\label{tensor rank case}

Let $d \ge 2$ be an integer, let $k$ be a nonnegative integer, and let $\F$ be a field. If $T: [n_1] \times \dots \times [n_d] \to \F$ is an order-$d$ tensor with tensor rank $k$, then there exist linear subspaces $A_1 \subset \F^{n_1}, \dots, A_d \subset \F^{n_d}$ such that if \begin{equation}T(x_1, \dots,x_d) = \sum_{i=1}^k a_{1,i}(x_1) \dots a_{d,i}(x_d) \label{tensor rank decomposition at the start of the tensor rank section} \end{equation} is a tensor rank decomposition of $T$ with length $k$, then we have \[A_1 = \langle a_{1,1}, \dots, a_{1,k} \rangle, \dots, A_d = \langle a_{d,1}, \dots, a_{d,k} \rangle.\] In particular, if the field $\F$ is finite, then the number of possible tensor rank decompositions of $T$ with length $k$ is at most $|\F|^{(d-1)k^2}$.

\end{proposition}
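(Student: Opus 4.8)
The plan is to reduce the uniqueness statement for the spanning subspaces to the matrix case by a flattening (contraction) argument, exactly in the spirit of how the slice-rank results in this paper are proved by applying dual functions. First I would fix a tensor rank decomposition \eqref{tensor rank decomposition at the start of the tensor rank section} of minimal length $k$ and, for a coordinate $j \in [d]$, show that the subspace $\langle a_{j,1}, \dots, a_{j,k} \rangle$ does not depend on the chosen decomposition. By symmetry it suffices to treat $j=1$. The key observation is that the $k$-dimensional subspace $\langle a_{1,1}, \dots, a_{1,k}\rangle$ should coincide with the \emph{column space} of the flattening $T^{(1)} : [n_1] \times \bigl(\prod_{j' \neq 1}[n_{j'}]\bigr) \to \F$ of $T$ obtained by grouping all coordinates other than the first; that is, with the subspace $\langle v : v = T^{(1)}(\cdot, y),\ y \in \prod_{j'\neq 1}[n_{j'}]\rangle \subset \F^{n_1}$, which is an invariant of $T$ alone. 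I would prove the two inclusions separately.

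For the inclusion $\supseteq$ (column space of $T^{(1)}$ inside $\langle a_{1,1},\dots,a_{1,k}\rangle$): from \eqref{tensor rank decomposition at the start of the tensor rank section} every slice $T^{(1)}(\cdot, (x_2,\dots,x_d))$ equals $\sum_{i=1}^k \bigl(\prod_{j'=2}^d a_{j',i}(x_{j'})\bigr) a_{1,i}$, a fixed linear combination of the $a_{1,i}$, so the column space of $T^{(1)}$ is contained in $\langle a_{1,1},\dots,a_{1,k}\rangle$. For the reverse inclusion, since the decomposition has minimal length $k$ the functions $a_{1,1},\dots,a_{1,k}$ are linearly independent (this is the standing assumption recorded in the excerpt, and here it is forced by minimality), so $\langle a_{1,1},\dots,a_{1,k}\rangle$ has dimension exactly $k$; on the other hand the column space of $T^{(1)}$ has dimension at most $k$ because $T^{(1)} = \sum_{i=1}^k a_{1,i} \otimes \bigl(\bigotimes_{j'=2}^d a_{j',i}\bigr)$ is a sum of $k$ rank-one matrices. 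Combining, the column space of $T^{(1)}$, which has dimension $\le k$, contains — after the first paragraph's inclusion is reversed — or rather is contained in a $k$-dimensional space; to get equality I would instead argue that the column space has dimension exactly $k$: if it had dimension $\le k-1$, then picking a family of $k-1$ spanning slices and expanding each in the remaining $a_{j',i}$ would, after using dual functions $(a_{1,i}^*)$ to isolate coordinates, yield a tensor rank decomposition of $T$ of length $< k$, contradicting $\tr T = k$. Hence the column space of $T^{(1)}$ is a $k$-dimensional subspace contained in the $k$-dimensional space $\langle a_{1,1},\dots,a_{1,k}\rangle$, so the two are equal. Setting $A_j$ to be the column space of $T^{(j)}$ for each $j$ then gives a subspace depending only on $T$ with $A_j = \langle a_{j,1},\dots,a_{j,k}\rangle$ for every minimal-length decomposition, proving the first assertion.

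For the final counting statement, once the $d$-tuple $(A_1,\dots,A_d)$ with $\dim A_j = k$ is fixed, a tensor rank decomposition of length $k$ is determined by choosing, for each $j$, an ordered basis $(a_{j,1},\dots,a_{j,k})$ of $A_j$ subject to the compatibility that the resulting rank-one terms sum to $T$. Fixing the basis in the first coordinate arbitrarily (there are $\le |\F|^{k^2}$ choices, crudely), the decomposition $T^{(1)} = \sum_{i=1}^k a_{1,i} \otimes c_i$ with $c_i := \bigotimes_{j'=2}^d a_{j',i}$ then determines the tensors $c_i \in \bigotimes_{j'=2}^d \F^{n_{j'}}$ uniquely, since the $a_{1,i}$ are linearly independent one applies the dual functions $a_{1,i}^*$ to read off $c_i = a_{1,i}^*.T^{(1)}$. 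Each $c_i$ is a rank-one tensor of order $d-1$, and its rank-one decomposition $c_i = \bigotimes_{j'=2}^d a_{j',i}$ is unique up to rescalings of the factors whose product is $1$; so the remaining data $(a_{j',i})_{j'=2}^d$ for each fixed $i$ ranges over a set of size $|\F|^{(d-2)}$-worth of scalings times nothing else — in any case, reorganising, the total count is at most $|\F|^{(d-1)k^2}$ by the same bookkeeping as for \eqref{number of decompositions of matrices}: $k^2$ free parameters in each of $d-1$ of the coordinate directions after the first is pinned down by $T$. I expect the main obstacle to be the bookkeeping in this last step — making the "unique up to trivial rescaling" statement for rank-one order-$(d-1)$ tensors precise and checking that the scalings are exactly absorbed so that the final exponent is $(d-1)k^2$ and not something larger; the structural part (identifying $A_j$ with the flattening column space) is routine linear algebra once the minimal-length hypothesis is used to control dimensions.
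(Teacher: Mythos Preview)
Your argument has a genuine gap. The claim that in a minimal-length tensor rank decomposition the functions $a_{1,1},\dots,a_{1,k}$ are linearly independent is false; the standing assumption you invoke from the paper is stated for \emph{slice rank} decompositions, where a linear dependence among the one-variable functions in a fixed coordinate lets one absorb a term into the others while staying in the class of slices. For tensor rank this absorption fails: substituting $a_{1,k}=\sum_{i<k}\lambda_i a_{1,i}$ gives $T=\sum_{i<k} a_{1,i}\otimes\bigl(\bigotimes_{j'\ge 2}a_{j',i}+\lambda_i\bigotimes_{j'\ge 2}a_{j',k}\bigr)$, and the new second factors are in general not rank one. Concretely, $T=a\otimes e_1\otimes e_1+a\otimes e_2\otimes e_2$ with $a\ne 0$ has $\tr T=2$ (each nonzero slice $T(x,\cdot,\cdot)$ has matrix rank $2$), yet $a_{1,1}=a_{1,2}=a$, and the column space of the flattening $T^{(1)}$ is $\langle a\rangle$, of dimension $1<k$. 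This same example also refutes your alternative route (``if the column space had dimension $\le k-1$ we would get a shorter tensor rank decomposition''): a rank-$(k-1)$ \emph{matrix} decomposition of $T^{(1)}$ is not a \emph{tensor rank} decomposition of $T$, since its row factors need not be rank-one order-$(d-1)$ tensors.

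Your target identity, that $A_1$ coincides with the column space of $T^{(1)}$, is in fact correct, but proving it requires the \emph{right} independence statement: minimality forces the products $c_i:=a_{2,i}\otimes\cdots\otimes a_{d,i}$ to be linearly independent (a dependence among the $c_i$ immediately yields a length-$(k-1)$ decomposition), and from this one reads off that the column space equals $\langle a_{1,1},\dots,a_{1,k}\rangle$. This is exactly the paper's mechanism: it compares two decompositions, applies a functional $u$ annihilating the $a'_{1,i}$ but not all the $a_{1,i}$, and deduces a dependence among the $c_i$ for a contradiction. The paper's counting argument likewise leans on the independence of the $c_i$ rather than of the $a_{1,i}$: one fixes the $(d-1)k$ functions in coordinates $2,\dots,d$ (each in the corresponding $A_j$, hence at most $|\F|^k$ choices), and then the independence of the $c_i$ pins down $a_{1,1},\dots,a_{1,k}$ uniquely. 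Your version, fixing coordinate $1$ first and recovering the $c_i$ via dual functions $a_{1,i}^*$, breaks down exactly when the $a_{1,i}$ are dependent.
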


\begin{proof}

Assume that \[T(x_1,\dots ,x_d) = \sum_{i=1}^k a_{1,i}(x_1)\dots a_{d,i}(x_d) = \sum_{i=1}^k a_{1,i}'(x_1)\dots a_{d,i}'(x_d)\] are two decompositions of $T$ with length $k$. To prove the first conclusion, it suffices to show that for each $j \in [d]$ the linear subspaces $\langle a_{j,1},\dots ,a_{j,k} \rangle$ and $\langle a_{j,1}',\dots ,a_{j,k}' \rangle$ are the same. Assume for contradiction that they are not. Then without loss of generality the linear subspace $\langle a_{1,1},\dots ,a_{1,k} \rangle$ is not contained in the linear subspace $\langle a_{1,1}',\dots ,a_{1,k}' \rangle$, so there exists $u:[n_1] \to \F$ such that $(u.a_{1,1}', \dots , u.a_{1,k}') = 0$ but $(u.a_{1,1}, \dots , u.a_{1,k}) \neq 0$. Applying $u$ to both decompositions of $T$ we obtain

\[\sum_{i=1}^k (u.a_{1,i}) a_{2,i}(x_2)\dots a_{d,i}(x_d) = 0,\] so the products $a_{2,i} \otimes \dots \otimes a_{d,i}$ with $i \in [k]$ are linearly dependent. Assuming without loss of generality that the product $a_{2,k} \otimes \dots \otimes a_{d,k}$ is a linear combination of the products $a_{2,i} \otimes \dots \otimes a_{d,i}$ with $i \in [k-1]$, we can hence write

\[T(x_1,\dots ,x_d) = \sum_{i=1}^{k-1} \a_{1,i}(x_1) a_{2,i}(x_2) \dots a_{d,i}(x_d)\] for some new functions $\a_{1,1},\dots ,\a_{1,k-1}: [n_1] \to \F$. This is a tensor rank decomposition of $T$ with length $k-1$, which contradicts that $T$ has tensor rank $k$.

If the field $\F$ is finite, then for each of the $k(d-1)$ functions $a_{j,i}$ with $j \in \{2,\dots,d\}$ and $i \in [k]$ involved in a given decomposition of $T$, there are at most $|A_j| \le |\F|^k$ possibilities. Once these functions are fixed, the linear independence of the products \[a_{2,i} \otimes \dots \otimes a_{d,i}\] with $i \in [k]$, shows that the functions $a_{1,1}, \dots, a_{1,k}$ are determined. The bound follows. \end{proof}

Although the upper bound that we obtain specialises in the case $d=2$ to the upper bound obtained at the start of the introduction, the number of tensor rank decompositions can be much less than that for general $d$. Indeed if an order-$d$ tensor $T$ with tensor rank $k$ is identifiable in the sense that we mentioned in the introduction, then it can be written in at most $k!$ ways as a sum of $k$ tensors with tensor rank $1$ (those being the same, up to permutation); as in turn there are $(|\F|-1)^{d-1}$ ways of rewriting a tensor with tensor rank $1$ as a product of $d$ functions, such as tensor has at most $k! (|\F|-1)^{d-1}$ choices for the number of possible tensor rank decompositions in our sense. Nonetheless, the following example shows that the behaviour from the $d=2$ case for $k$ large can carry over to tensors of higher order, and that the exponent in the bound from Proposition \ref{tensor rank case} cannot be improved by a factor greater than $d$ in general. We recall the absolute constant $\omega = \prod_{i=1}^{\infty} (1-2^{-i})>0$ defined at the start of the introduction.

\begin{example} \emph{Let $d \ge 2$ be an integer, and let $k$ be a nonnegative integer. Let $T$ be the order-$d$ tensor defined by \begin{equation} T(x_1, \dots, x_d) = M(x_1,x_2) a_3(x_3) \dots a_d(x_d) \label{tensor rank decomposition of the example} \end{equation} for some matrix $M: [n_1] \times [n_2] \to \F$ with rank $k$ and some non-zero functions $a_3: [n_3] \to \F, \dots, a_d: [n_d] \to \F$. Then $T$ has tensor rank equal to $k$, and the number of tensor rank decompositions of length $k$ of $T$ is exactly \[(|\F|-1)^{(d-2)k} |\F|^{k^2} \prod_{i=1}^{k} (1-|\F|^{-i}),\] which is between $\omega |\F|^{k^2}$ and $|\F|^{k^2 + (d-2)k}$.} \end{example}

\begin{proof}

In one direction, writing a rank decomposition of the matrix $M$ with length $k$ and plugging it in \eqref{tensor rank decomposition of the example} shows $\tr T \le k$. In the other direction, if \begin{equation} T(x_1, \dots, x_d) = \sum_{i=1}^{k'} a_{1,i}(x_1) a_{2,i}(x_2) a_{3,i}(x_3) \dots a_{d,i}(x_d) \label{new tensor rank decomposition of the example} \end{equation} is a tensor rank decomposition of $T$ for some integer $k'$, then letting $a_3^*: [n_3] \to \F, \dots, a_d^*:[n_d] \to \F$ such that $a_j^*.a_j = 1$ for each $j \in \{3, \dots, d\}$ and applying $a_3^*, \dots, a_d^*$ to both decompositions \eqref{tensor rank decomposition of the example} and \eqref{new tensor rank decomposition of the example} leads to \[M(x_1,x_2) = \sum_{i=1}^{k'} C_i a_{1,i}(x_1) a_{2,i}(x_2)\] for some coefficients $C_i \in \F$, so $k' \ge \rk M$ and hence $\tr T \ge k$.

The decomposition \eqref{tensor rank decomposition of the example} shows that the linear subspaces $A_3, \dots, A_d$ of Proposition \ref{tensor rank case} each have dimension $1$, and there are hence at most $(|\F|-1)^{(d-2)k}$ choices for \begin{equation} ((a_{3,1}, \dots, a_{3,k}), \dots, (a_{d,1}, \dots, a_{d,k})) \label{one-variable functions between 3 and d} \end{equation} since if one of these functions were zero, then this would contradict $\tr T = k$. Once this choice is made, there are \[|\F|^{k^2} \prod_{i=1}^{k} (1-|\F|^{-i})\] choices for $(a_{2,1}, \dots, a_{2,k})$, and then $(a_{1,1}, \dots, a_{1,k})$ is completely determined. Conversely, for any given choice of \eqref{one-variable functions between 3 and d} such that $a_{j,i} \in A_j \setminus \{0\}$ for every $j \in \{3, \dots, d\}$ and $i \in [k]$, the decomposition \eqref{tensor rank decomposition at the start of the tensor rank section} becomes \[\sum_{i=1}^k D_i a_{1,i}(x_1) a_{2,i}(x_2) a_3(x_3) \dots a_d(x_d)\] for some $D_1, \dots, D_k \in \F \setminus \{0\}$, so whenever $f_1, \dots, f_k: [n_1] \to \F$, $g_1, \dots, g_k: [n_2] \to \F$ provide a decomposition \[M(x_1,x_2) = \sum_{i=1}^k f_i(x_1) g_i(x_2)\] of $M$ with length $k$, taking $a_{1,i} = f_i$ and $a_{2,i} = g_i/D_i$ for every $i \in [k]$ provides a decomposition of $T$ with length $k$. \end{proof}

\section{Proof of the theorem on sunflowers for order-$3$ tensors}\label{Section: The proof for order-$3$ tensors}

The main goal of this section is to prove Proposition \ref{order-$d$ decompositions of zero} and Theorem \ref{At most d decompositions} in the case of order-$3$ tensors, and discuss the main ideas behind the proofs as well as how they were discovered.

The statement of the next lemma is similar to Lemma 9 from \cite{Gowers}. The proof that we give is slightly different from the proof in that paper, but they are based on the same idea.

\begin{lemma} \label{lemma on every element spanned}

Let $F_A \subset \F^{n_1}, F_C \subset \F^{n_2}$ both be families of linearly independent vectors, and let $E$ be a linear subspace of $\F^{n_3}$. For each $(a,c) \in F_A \times F_C$, let $p_{a,c}$ be an element of $\F^{n_3}$. Assume that the linear combination \[\sum_{a \in F_A, c \in F_C} a \otimes c \otimes p_{a,c}\] belongs to $\F^{n_1} \otimes \F^{n_2} \otimes E$. Then $p_{a,c} \in E$ for every $(a,c) \in F_A \times F_C$. \end{lemma}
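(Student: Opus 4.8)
The plan is to isolate each vector $p_{a,c}$ by contracting the first two tensor coordinates against dual functions, and then to observe that any such contraction sends $\F^{n_1} \otimes \F^{n_2} \otimes E$ into $E$.

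First I would enumerate the two families as $F_A = \{a_1, \dots, a_p\}$ and $F_C = \{c_1, \dots, c_q\}$. Since these families are linearly independent, the Gaussian elimination observation recalled in the introduction provides dual families $(a_1^*, \dots, a_p^*)$ in $\F^{n_1}$ and $(c_1^*, \dots, c_q^*)$ in $\F^{n_2}$ with $a_i^*.a_{i'} = 1_{i=i'}$ and $c_k^*.c_{k'} = 1_{k=k'}$ for all relevant indices. Write $S = \sum_{a \in F_A, c \in F_C} a \otimes c \otimes p_{a,c}$, viewed as a function $[n_1] \times [n_2] \times [n_3] \to \F$.

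Next, fix a pair $(i,k) \in [p] \times [q]$ and apply the contraction $(a_i^* \otimes c_k^*).S$, which in the notation of the introduction (with $J = [3]$ and $J' = \{1,2\}$) is an element of $\F^{n_3}$. By bilinearity of the contraction and the duality relations, the term $a_{i'} \otimes c_{k'} \otimes p_{a_{i'},c_{k'}}$ contributes $(a_i^*.a_{i'})(c_k^*.c_{k'}) \, p_{a_{i'},c_{k'}}$, so that $(a_i^* \otimes c_k^*).S = p_{a_i,c_k}$. On the other hand, by hypothesis $S \in \F^{n_1} \otimes \F^{n_2} \otimes E$, so I may write $S = \sum_{l=1}^m u_l \otimes v_l \otimes w_l$ with $u_l \in \F^{n_1}$, $v_l \in \F^{n_2}$ and $w_l \in E$; then $(a_i^* \otimes c_k^*).S = \sum_{l=1}^m (a_i^*.u_l)(c_k^*.v_l) \, w_l$, which is a linear combination of the $w_l \in E$ and hence lies in $E$. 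Comparing the two computations gives $p_{a_i,c_k} \in E$, and since $(i,k)$ was arbitrary this is the claim.

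There is no serious obstacle here; the only point needing a little care is that the contraction against dual functions in the first two coordinates acts on the third coordinate purely by forming linear combinations, so it preserves membership in the subspace $E$ and not merely in all of $\F^{n_3}$.
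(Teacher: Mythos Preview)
Your proof is correct and follows essentially the same approach as the paper: both choose dual families to $F_A$ and $F_C$, apply the contraction $a^* \otimes c^*$ to both sides, and observe that the left-hand side yields $p_{a,c}$ while the right-hand side is a linear combination of vectors in $E$. The only cosmetic difference is that the paper expands the element of $\F^{n_1} \otimes \F^{n_2} \otimes E$ using the canonical bases of $\F^{n_1}$ and $\F^{n_2}$, whereas you use an arbitrary finite representation $\sum_l u_l \otimes v_l \otimes w_l$ with $w_l \in E$.
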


\begin{proof}

We define dual families $F_A^*$, $F_C^*$ to the families $F_A$, $F_C$ respectively. The assumption states that we can write \[\sum_{a \in A, c \in C} a \otimes c \otimes p_{a,c} = \sum_{i \in [n_1]} \sum_{j \in [n_2]} \tau_{i,1} \otimes \tau_{j,2} \otimes e_{i,j}\] where the vectors $\tau_{i,1}, \tau_{j,2}$ are respectively the canonical basis vectors of $\F^{n_1}$ and $\F^{n_2}$ respectively, and $e_{i,j} \in E$ for every $(i,j) \in [n_1] \times [n_2]$. For any given pair $(a,c) \in F_A \times F_C$, applying $a^* \otimes c^*$ provides \[p_{a,c} = \sum_{i=1}^{n_1} \sum_{j=1}^{n_2} a^*(i) c^*(j) e_{i,j}.\] This establishes the lemma. \end{proof}

We first prove Proposition \ref{order-$d$ decompositions of zero} in the $d=3$ case. It can be restated as follows, in a way that highlights the successive functions that will appear in the proof. As much of the proof is similar to that of Lemma 10 from \cite{Gowers} and to the discussion between that lemma and Corollary 11 there, we have decided to re-use the same notations as those of that paper.

\begin{proposition} \label{order-$3$ decompositions of zero} Let $r,s,t$ be nonnegative integers. Assume that we have the decomposition \begin{equation} \sum_{i=1}^r a_i(x) b_i(y,z) + \sum_{j=1}^s c_j(y) d_j(x,z) + \sum_{k=1}^t e_k(z) f_k(x,y) \label{decomposition equal to 0 for order-3 tensors in proof of structure of the order-2 functions} \end{equation} of the order-$3$ zero tensor $[n_1] \times [n_2] \times [n_3] \to \F$ and that the families \[(a_i: i \in [r]), (c_j: j \in [s]), (e_k: k \in [t])\] are each linearly independent. Then we may write \begin{align} b_i & = \sum_{j=1}^s c_j \otimes p_{i,j} + \sum_{k=1}^t q_{i,k} \otimes e_k \label{first equation for order-2 functions of order-3 tensors}\\ 
d_j & = \sum_{i=1}^r a_i \otimes g_{i,j} + \sum_{k=1}^t h_{j,k} \otimes e_k \label{second equation for order-2 functions of order-3 tensors}\\
f_k & = \sum_{i=1}^r a_i \otimes u_{i,k} + \sum_{j=1}^s v_{j,k} \otimes c_j  \label{third equation for order-2 functions of order-3 tensors}\end{align}
for each $i \in [r], j \in [s], k \in [t]$ respectively, where $h_{j,k}, v_{j,k}: [n_1] \to \F$, $q_{i,k},u_{i,k}: [n_2] \to \F$, $p_{i,j}, g_{i,j}: [n_3] \to \F$ are some functions satisfying \begin{align} p_{i,j} + g_{i,j} & = \sum_{k=1}^t \lambda_{i,j,k} e_k \label{linear combination of the e_k} \\
q_{i,k} + u_{i,k} & = \sum_{j=1}^s \mu_{i,j,k} \label{linear combination of the c_j} c_j\\
h_{j,k} + v_{j,k} & = \sum_{i=1}^r \nu_{i,j,k} a_i \label{linear combination of the a_i} \end{align} for each $ (i,j) \in [r] \times [s]$, for each $ (i,k) \in [r] \times [t]$, and for each $ (j,k) \in [s] \times [t]$ respectively, with $\lambda_{i,j,k}, \mu_{i,j,k}, \nu_{i,j,k} \in \F$ such that \begin{equation} \lambda_{i,j,k} + \mu_{i,j,k} + \nu_{i,j,k} = 0 \label{sum of three coefficients equal to 0} \end{equation} for each $(i,j,k) \in [r] \times [s] \times [t]$. \end{proposition}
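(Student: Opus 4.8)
The plan is to write down all nine families of auxiliary objects explicitly, as single and double contractions of the given functions $b_i,d_j,f_k$ against dual families, and then to read off each identity in the statement almost by inspection. Since the families $(a_i)_{i\in[r]}$, $(c_j)_{j\in[s]}$, $(e_k)_{k\in[t]}$ are linearly independent, I would first fix dual families $(a_i^*)$, $(c_j^*)$, $(e_k^*)$ with $a_i^*.a_{i'}=1_{i=i'}$ and similarly for the other two, and then set
\[ p_{i,j}=-(a_i^*.d_j), \quad q_{i,k}=-(a_i^*.f_k), \quad g_{i,j}=-(c_j^*.b_i), \]
\[ h_{j,k}=-(c_j^*.f_k), \quad u_{i,k}=-(e_k^*.b_i), \quad v_{j,k}=-(e_k^*.d_j), \]
each contraction being performed in the unique variable on which the dual function depends; a look at the domains confirms that these land in $\F^{n_3},\F^{n_2},\F^{n_3},\F^{n_1},\F^{n_2},\F^{n_1}$ respectively, as the statement requires. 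I would also set $\lambda_{i,j,k}=(a_i^*\otimes c_j^*).f_k$, $\mu_{i,j,k}=(a_i^*\otimes e_k^*).d_j$ and $\nu_{i,j,k}=(c_j^*\otimes e_k^*).b_i$.

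The three decomposition identities \eqref{first equation for order-2 functions of order-3 tensors}, \eqref{second equation for order-2 functions of order-3 tensors}, \eqref{third equation for order-2 functions of order-3 tensors} would then follow by contracting the hypothesis that \eqref{decomposition equal to 0 for order-3 tensors in proof of structure of the order-2 functions} is the zero tensor against a single dual function. Applying $a_i^*$ in the first variable and using $a_i^*.a_{i'}=1_{i=i'}$ collapses the first sum to $b_i$ and turns the other two sums into $\sum_j c_j\otimes(a_i^*.d_j)$ and $\sum_k (a_i^*.f_k)\otimes e_k$, which after moving to the other side is precisely \eqref{first equation for order-2 functions of order-3 tensors} with the above $p_{i,j},q_{i,k}$; applying $c_j^*$ in the second variable gives \eqref{second equation for order-2 functions of order-3 tensors}, and applying $e_k^*$ in the third variable gives \eqref{third equation for order-2 functions of order-3 tensors}.

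For the compatibility relations I would substitute these identities back into the definitions. For example, plugging \eqref{second equation for order-2 functions of order-3 tensors} for $d_j$ into $p_{i,j}=-(a_i^*.d_j)$ makes the $a_{i'}$-part of $d_j$ contribute $-g_{i,j}$ and its $e_k$-part contribute $-\sum_k (a_i^*.h_{j,k})\,e_k$, whence $p_{i,j}+g_{i,j}=-\sum_k (a_i^*.h_{j,k})\,e_k=\sum_k \lambda_{i,j,k}\,e_k$, the last equality because $h_{j,k}=-(c_j^*.f_k)$; this is \eqref{linear combination of the e_k}, and \eqref{linear combination of the c_j} and \eqref{linear combination of the a_i} come out identically after plugging \eqref{third equation for order-2 functions of order-3 tensors} into $q_{i,k}=-(a_i^*.f_k)$ and \eqref{second equation for order-2 functions of order-3 tensors} into $v_{j,k}=-(e_k^*.d_j)$ respectively. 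Finally, \eqref{sum of three coefficients equal to 0} would not be checked index by index but obtained in a single step: contracting the hypothesis that \eqref{decomposition equal to 0 for order-3 tensors in proof of structure of the order-2 functions} vanishes against $a_i^*\otimes c_j^*\otimes e_k^*$ makes its three sums collapse respectively to $(c_j^*\otimes e_k^*).b_i$, $(a_i^*\otimes e_k^*).d_j$ and $(a_i^*\otimes c_j^*).f_k$, that is, to $\nu_{i,j,k}$, $\mu_{i,j,k}$ and $\lambda_{i,j,k}$, so their sum is $0$.

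The argument carries essentially no analytic content; the only real obstacle is bookkeeping — selecting, for each of the nine objects, the contraction that makes the corresponding identity a tautology, and in particular noticing that \eqref{sum of three coefficients equal to 0} is nothing more than one triple contraction of the zero tensor rather than a relation to be verified coefficient by coefficient. I would also stress that there is no circularity: every auxiliary function and scalar is defined directly from the fixed $b_i,d_j,f_k$ (not recursively in terms of one another), so the steps above are genuine verifications; and everything goes through verbatim when any of $r,s,t$ equals zero, with the corresponding sums empty.
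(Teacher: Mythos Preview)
Your proof is correct and follows the same core idea as the paper's --- use dual families $(a_i^*),(c_j^*),(e_k^*)$ to extract the auxiliary functions and then verify the relations. The organization differs slightly: the paper first obtains \eqref{first equation for order-2 functions of order-3 tensors}--\eqref{third equation for order-2 functions of order-3 tensors} by applying single duals, then substitutes these back into the original zero decomposition, invokes Lemma~\ref{lemma on every element spanned} to force $p_{i,j}+g_{i,j}\in\langle e_k\rangle$ etc., and finally substitutes once more and uses linear independence of the products $a_i\otimes c_j\otimes e_k$ to get \eqref{sum of three coefficients equal to 0}. You instead define all nine families and the scalars $\lambda,\mu,\nu$ explicitly as contractions from the outset, verify \eqref{linear combination of the e_k}--\eqref{linear combination of the a_i} by substituting one decomposition identity into another's defining contraction, and obtain \eqref{sum of three coefficients equal to 0} in one shot as the triple contraction of the zero tensor. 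Your route is a bit more direct --- it bypasses Lemma~\ref{lemma on every element spanned} and the final linear-independence step, and has the bonus of making the coefficients $\lambda_{i,j,k},\mu_{i,j,k},\nu_{i,j,k}$ explicit --- while the paper's route emphasizes the structural picture that will later generalize to the order-$d$ case.
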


\begin{proof}

Applying dual functions $a_i^*, c_j^*, e_k^*$ we get the expressions \eqref{first equation for order-2 functions of order-3 tensors}, \eqref{second equation for order-2 functions of order-3 tensors}, \eqref{third equation for order-2 functions of order-3 tensors}. Substituting in \eqref{decomposition equal to 0 for order-3 tensors in proof of structure of the order-2 functions} and applying Lemma \ref{lemma on every element spanned} shows that \begin{align*} p_{i,j} + g_{i,j} &\in \langle e_k: k \in [t] \rangle \\q_{i,k} + u_{i,k} &\in \langle c_j: j \in [s] \rangle\\h_{j,k} + v_{j,k} &\in \langle a_i: i \in [r] \rangle
\end{align*} respectively for each $(i,j) \in [r] \times [s]$, for each $(i,k) \in [r] \times [t]$, and for each $(j,k) \in [s] \times [t]$, so can write these functions as in \eqref{linear combination of the e_k}, \eqref{linear combination of the c_j}, \eqref{linear combination of the a_i}. Substituting in \eqref{decomposition equal to 0 for order-3 tensors in proof of structure of the order-2 functions} we obtain \[\sum_{i=1}^r \sum_{j=1}^s \sum_{k=1}^t (\lambda_{i,j,k}+\mu_{i,j,k}+\nu_{i,j,k}) a_i \otimes c_j \otimes e_k = 0.\] Because the products $a_i \otimes c_j \otimes e_k$ with $ (i,j,k) \in [r] \times [s] \times [t]$ are linearly independent, we conclude \eqref{sum of three coefficients equal to 0}. \end{proof}

Now turning to the one-variable functions, we can begin by asking for the weakest possible property asserting some lack of independence between the one-variable functions involved in several decompositions. As the strongest form of independence is to have, for every fixed $j \in [d]$, the linear subspaces $A_j^{\t}$ spanned by the functions $a_{j,i}^{\t}$ of the decompositions (indexed by $\t$) all be in direct sum, we aim to show that this can only occur for a bounded number of decompositions.

As formulated this last goal is not yet completely reasonable, as shown by the fact that the zero tensor can be decomposed as \[a(x) (bc)(y,z) - b(y) (ac)(x,z)\] for all $a: [n_1] \to \F$, $b: [n_2] \to \F$, $c: [n_3] \to \F$ (or even more simply, by repeating the length-$0$ decomposition of the zero tensor arbitrarily many times). What we will rather show is that the zero tensor is the only order-$3$ tensor for which there exist four decompositions where for each of the three choices of special coordinate the one-variable functions may all be in direct sum.

\begin{proposition}\label{A tensor with four independent decompositions is zero}

Let $T: [n_1] \times [n_2] \times [n_3] \to \F$ be an order-3 tensor. Assume that for every $\t \in [4]$ we have a decomposition \begin{equation} T(x,y,z) = \sum_{i=1}^{r_{\t}} a_i^\t(x) b_i^\t(y,z) + \sum_{j=1}^{s_\t} c_j^\t(y) d_j^\t(x,z) + \sum_{k=1}^{t_\t} e_k^\t(z) f_k^\t(x,y) 
\label{decomposition of T for each u} \end{equation} for some nonnegative integers $r_\t, s_\t, t_\t$. Assume that the families \begin{equation} (a_i^\t: \t \in [4], i \in [r_\t]) \text{, } (c_j^\t: \t \in [4], j \in [s_\t]) \text{, } (e_k^\t: \t \in [4], k \in [t_\t]) \label{linearly independent families for four decompositions} \end{equation} are each linearly independent. Then $T=0$.

\end{proposition}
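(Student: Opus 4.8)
The plan is to deduce $T = 0$ from linear-algebra constraints obtained by contracting $T$ in its first coordinate. For $\theta \in [4]$ write $A^\theta = \langle a_i^\theta : i \in [r_\theta] \rangle \subseteq \F^{n_1}$, $C^\theta = \langle c_j^\theta : j \in [s_\theta] \rangle \subseteq \F^{n_2}$ and $E^\theta = \langle e_k^\theta : k \in [t_\theta] \rangle \subseteq \F^{n_3}$; by the linear independence of the families in \eqref{linearly independent families for four decompositions}, the four subspaces $A^1, \dots, A^4$ are in direct sum, and likewise for $C^1, \dots, C^4$ and for $E^1, \dots, E^4$. The central claim is that $T \in (A^2 \oplus A^3 \oplus A^4) \otimes \F^{n_2} \otimes \F^{n_3}$; once this is shown, the hypothesis being symmetric under permutations of $[4]$, the analogous containment holds with $\{2,3,4\}$ replaced by any three-element subset of $[4]$, and intersecting these four containments in the first tensor factor --- using that $A^1, \dots, A^4$ are in direct sum, so that $\bigcap_{\theta_0 \in [4]} \bigoplus_{\theta \neq \theta_0} A^\theta = \{0\}$ --- forces $T = 0$.

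To prove the claim it suffices, by the standard description of subspaces of the form $W \otimes \F^{n_2} \otimes \F^{n_3}$ with $W \subseteq \F^{n_1}$, to show that $a^* . T = 0$ for every $a^* \in \F^{n_1}$ vanishing on $A^2 \oplus A^3 \oplus A^4$. Fix such an $a^*$. For each $\theta \in \{2,3,4\}$ we have $a^* . a_i^\theta = 0$ for all $i$, so contracting the decomposition \eqref{decomposition of T for each u} in the $x$-coordinate kills its first sum and yields
\[ a^* . T = \sum_{j=1}^{s_\theta} c_j^\theta \otimes (a^* . d_j^\theta) + \sum_{k=1}^{t_\theta} (a^* . f_k^\theta) \otimes e_k^\theta \in C^\theta \otimes \F^{n_3} + \F^{n_2} \otimes E^\theta . \]
Hence $a^* . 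T$ lies in the intersection over $\theta \in \{2,3,4\}$ of the subspaces $C^\theta \otimes \F^{n_3} + \F^{n_2} \otimes E^\theta$.

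The one point requiring a genuine computation is that this triple intersection is $\{0\}$: completing $C^2, C^3, C^4$ to a direct-sum decomposition of $\F^{n_2}$ and $E^2, E^3, E^4$ to one of $\F^{n_3}$ writes $\F^{n_2} \otimes \F^{n_3}$ as a direct sum of ``blocks'', and $C^\theta \otimes \F^{n_3} + \F^{n_2} \otimes E^\theta$ is precisely the span of those blocks whose $\F^{n_2}$-type or $\F^{n_3}$-type equals $\theta$; a block surviving all three intersections would need its two types to cover $\{2,3,4\}$, which is impossible, so the intersection is trivial. Thus $a^* . T = 0$, which gives the claim and therefore $T = 0$ as explained above. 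I expect the only real care needed to be in this ``three types but two slots'' bookkeeping and in checking that the hypothesis is genuinely invariant under permutations of the four decompositions (it is, since no minimality is imposed and only linear independence of the combined families is assumed). An alternative route to the same conclusion is to apply Proposition \ref{order-$3$ decompositions of zero} to the difference of the first decomposition with each of the other three, intersect the resulting containments for the functions $b_i^1$, $d_j^1$, $f_k^1$, and substitute back into the first decomposition; this again reduces to the same block-counting.
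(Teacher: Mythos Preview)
Your proof is correct and takes a genuinely different, more direct route than the paper's. The paper proceeds via an intermediate structural lemma (Lemma \ref{Intermediate result on decompositions}): by comparing decomposition $1$ pairwise with each of decompositions $2,3,4$ and applying dual functions, it shows that every two-variable function $b_i^\t, d_j^\t, f_k^\t$ of decomposition $\t$ admits a rank decomposition whose specified one-variable functions all come from that same $\t$; substituting back and comparing the resulting expressions for $\t=1$ and $\t=2$ then places $T$ in two subspaces of $\F^{n_1}\otimes\F^{n_2}\otimes\F^{n_3}$ whose intersection is $\{0\}$. Your argument bypasses this lemma entirely: you contract in one coordinate against a single $a^*$ annihilating $A^2\oplus A^3\oplus A^4$, land directly in $C^\t\otimes\F^{n_3}+\F^{n_2}\otimes E^\t$ for three values of $\t$, and kill the intersection by the ``three types, two slots'' pigeonhole; then you repeat by symmetry on $[4]$ to get $T\in\bigcap_{\t_0}\bigl(\bigoplus_{\t\neq\t_0}A^\t\bigr)\otimes\F^{n_2}\otimes\F^{n_3}=\{0\}$.

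What each approach buys: your argument is shorter and isolates the combinatorial core (the pigeonhole on block labels) very cleanly; it is also the same mechanism that underlies the $d=2$ case (Proposition \ref{A matrix with three common-independent decompositions has slice rank at most the common part}), which the paper later states separately. The paper's route, though longer here, is deliberately building toward the sunflower version with a nontrivial centre (Proposition \ref{A tensor with four common-independent decompositions has slice rank at most the common part}) and the general-$d$ Theorem \ref{At most d decompositions}, where one repeatedly reduces the order by one and needs control over the lower-order pieces of each decomposition; Lemma \ref{Intermediate result on decompositions} is the prototype of that inductive step. Your remark at the end about the alternative route via Proposition \ref{order-$3$ decompositions of zero} is closer in spirit to what the paper actually does.
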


Let us briefly outline the proof before writing it formally. Writing an equality between two decompositions of $T$ (indexed by, say, $1$ and $2$) and applying the dual functions then allows us to write any given two-variable function of the first decomposition (such as, say, $b_1^{1}$) as a sum of products of two one-variable functions, one of which, of one of the types $c_j^1$, $c_j^2$, $e_k^1$, $e_k^2$, is specified and may come from either of the two decompositions, and the other of which is unspecified. We can write similar expressions of $b_1^{1}$ where the third and fourth decompositions supply the one-variable functions that the second decomposition previously supplied, which provides two other decompositions of $b_1^{1}$, after which it is only moderately difficult to show that $b_1^{1}$ has a decomposition of the kind that we have just described, where all specified one-variable functions are of the types $c_j^1$ and $e_k^1$. In other words we show that for every $\t \in [4]$ the two-variable functions $b_i^\t, d_j^\t, f_k^\t$ have decompositions as sums of products of two functions where at least one function of each product is of the type $a_i^\t, c_j^\t, e_k^\t$ for that same value of $\t$. This is the statement of Lemma \ref{Intermediate result on decompositions}, and is a property which is strong enough to allow us to conclude that $T=0$ once we apply it to two different values of $\t$.

For every decomposition $\t$ we respectively write $A^{\t}, C^{\t}, E^{\t}$ for the linear subspaces \[\langle a_i^\t: i \in [r_\t] \rangle \text{, } \langle c_j^\t: j \in [s_\t] \rangle \text{, } \langle e_k^\t: k \in [t_\t] \rangle.\] We begin with a lemma which informally says that under the assumptions of Proposition \ref{A tensor with four independent decompositions is zero}, the two-variable functions of every decomposition $\t$ can themselves be further decomposed as sums of products of two one-variable functions, where in every product one of the functions is a one-variable function from that same decomposition $\t$ of the original tensor. So far for we have used notations such as $a_i^\t$, $b_i^\t$ for functions arising from a decomposition $\t$. As will now write equalities between decompositions, and use them to obtain expressions for the functions $b_i^\t$, our calculations will involve terms that may depend either only on one decomposition or simultaneously on both of the decompositions between which we wrote the original equality. In the first case these terms will be denoted for instance by $p_{i,j}^{\t}$ to indicate that they only depend on the decomposition $\t$, and the second case they will instead be written for instance as $p_{i,j}^{\t \t'}$ to indicate that they depend on both decompositions $\t$,$\t'$.

\begin{lemma} \label{Intermediate result on decompositions} Let $T: [n_1] \times [n_2] \times [n_3] \to \F$ be an order-$3$ tensor. Under the assumptions of Proposition \ref{A tensor with four independent decompositions is zero}, for each $\t \in [4]$ we can write decompositions of the type \begin{align} b_i^\t(y,z) & = \sum_{j=1}^{s_\t} c_j^\t(y) p_{i,j}^\t(z) + \sum_{k=1}^{t_\t} q_{i,k}^\t(y) e_k^\t(z) \label{decomposition of b_i^u} \\
d_j^\t(x,z) & = \sum_{i=1}^{r_\t} a_i^\t(x) g_{i,j}^\t(z) + \sum_{k=1}^{t_\t} h_{j,k}^\t(x) e_k^\t(z) \label{decomposition of d_j^u}\\
f_k^\t(x,y) & = \sum_{i=1}^{r_\t} a_i^\t(x) u_{i,k}^\t(y) + \sum_{j=1}^{s_\t} v_{j,k}^\t(x) c_j^\t(y). \label{decomposition of f_k^u} \end{align}

\end{lemma}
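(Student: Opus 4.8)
The plan is to prove \eqref{decomposition of b_i^u}, \eqref{decomposition of d_j^u} and \eqref{decomposition of f_k^u} by fixing $\t \in [4]$ and comparing the decomposition \eqref{decomposition of T for each u} of $T$ indexed by $\t$ against each of the three decompositions indexed by the elements of $[4] \setminus \{\t\}$, feeding each difference, which is a decomposition of the zero tensor, into Proposition \ref{order-$3$ decompositions of zero}. I will carry this out for \eqref{decomposition of b_i^u}; the two remaining equalities follow in exactly the same way, using the conclusions \eqref{second equation for order-2 functions of order-3 tensors} and \eqref{third equation for order-2 functions of order-3 tensors} of Proposition \ref{order-$3$ decompositions of zero} in place of \eqref{first equation for order-2 functions of order-3 tensors}.

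Fix $\t \in [4]$ and $\sigma \in [4] \setminus \{\t\}$. Subtracting the decomposition \eqref{decomposition of T for each u} for $\sigma$ from the one for $\t$ gives a decomposition of the order-$3$ zero tensor whose families of one-variable functions attached to the first, second and third coordinates are $(a_i^\t)_i \cup (a_i^\sigma)_i$, $(c_j^\t)_j \cup (c_j^\sigma)_j$ and $(e_k^\t)_k \cup (e_k^\sigma)_k$; each of these is linearly independent, being a subfamily of one of the linearly independent families in \eqref{linearly independent families for four decompositions}. Proposition \ref{order-$3$ decompositions of zero} therefore applies, and its conclusion \eqref{first equation for order-2 functions of order-3 tensors} exhibits $b_i^\t$ as an element of $(C^\t + C^\sigma) \otimes \F^{n_3} + \F^{n_2} \otimes (E^\t + E^\sigma)$. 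Letting $\sigma$ range over $[4] \setminus \{\t\}$ gives three such containments for $b_i^\t$.

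To intersect them, observe that the linear independence of the second family in \eqref{linearly independent families for four decompositions} puts $C^1, C^2, C^3, C^4$ in direct sum inside $\F^{n_2}$, and likewise $E^1, E^2, E^3, E^4$ in direct sum inside $\F^{n_3}$; fixing complements, we write $\F^{n_2} \otimes \F^{n_3}$ as a direct sum of $25$ blocks $C^a \otimes E^b$, where each of $a$ and $b$ ranges over the five indices $1, 2, 3, 4, \star$, with $\star$ labelling the chosen complement. For each $\sigma$, the subspace $(C^\t + C^\sigma) \otimes \F^{n_3} + \F^{n_2} \otimes (E^\t + E^\sigma)$ is the span of exactly those blocks with $a \in \{\t, \sigma\}$ or $b \in \{\t, \sigma\}$. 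A block lying in all three of these subspaces must satisfy $a = \t$ or $b = \t$: otherwise every $\sigma \in [4] \setminus \{\t\}$ would lie in $\{a, b\}$, which is impossible since $[4] \setminus \{\t\}$ has three elements whereas $\{a, b\}$ has at most two. Hence $b_i^\t$ lies in the span of the blocks with $a = \t$ or $b = \t$, that is, in $C^\t \otimes \F^{n_3} + \F^{n_2} \otimes E^\t$; as $(c_j^\t)_{j \in [s_\t]}$ and $(e_k^\t)_{k \in [t_\t]}$ are bases of $C^\t$ and $E^\t$ respectively, this is exactly a representation of $b_i^\t$ of the form \eqref{decomposition of b_i^u}.

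The step needing the most care is the final block count, which is also where the hypothesis of four decompositions is used in full: two ``other'' decompositions, say those indexed by $\sigma$ and $\sigma'$, would not suffice, since the block $C^\sigma \otimes E^{\sigma'}$ then lies in both $(C^\t + C^\sigma) \otimes \F^{n_3} + \F^{n_2} \otimes (E^\t + E^\sigma)$ and $(C^\t + C^{\sigma'}) \otimes \F^{n_3} + \F^{n_2} \otimes (E^\t + E^{\sigma'})$ but not in $C^\t \otimes \F^{n_3} + \F^{n_2} \otimes E^\t$. Apart from this, the argument is a direct application of Proposition \ref{order-$3$ decompositions of zero} together with the bookkeeping of which block of $\F^{n_2} \otimes \F^{n_3}$ each term can occupy.
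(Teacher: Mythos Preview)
Your proof is correct. The underlying idea is the same as the paper's --- compare the decomposition indexed by $\t$ with each of the other three and then intersect --- but the packaging differs. The paper works by hand: it applies the dual functions $a_i^{1*}$ directly to the equality between decompositions $1$ and $2$, obtains explicit coefficient functions $p_{i,j}^{12}$, $q_{i,k}^{12}$, and then by subtracting the $(1,2)$ and $(1,3)$ expressions (and then $(1,4)$) shows that these coefficients lie in successively smaller subspaces until a residual term $\sum \lambda_{i,j,k}^{12} c_j^2 \otimes e_k^2$ remains, which is finally killed by one more subtraction. You instead invoke Proposition~\ref{order-$3$ decompositions of zero} as a black box to get the containment $b_i^\t \in (C^\t + C^\sigma) \otimes \F^{n_3} + \F^{n_2} \otimes (E^\t + E^\sigma)$ in one stroke, and then carry out the intersection globally via the $5 \times 5$ block decomposition of $\F^{n_2} \otimes \F^{n_3}$. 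Your route is shorter and more conceptual, and makes the role of the number four transparent (three petals are needed so that no off-center block survives the pigeonhole); the paper's route is more explicit about the coefficient functions, which is closer in spirit to how the argument is later reused in the proof of Proposition~\ref{A tensor with four common-independent decompositions has slice rank at most the common part} and its generalisation to order $d$.
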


\begin{proof}
We choose dual families $ (a_i^{\t*})$, $ (c_j^{\t*})$, $ (e_k^{\t*})$ to the respective (linearly independent) families \eqref{linearly independent families for four decompositions}. We begin with the equality \begin{align*} & \sum_{i=1}^{r_1} a_i^1(x) b_i^1(y,z) + \sum_{j=1}^{s_1} c_j^1(y) d_j^1(x,z) + \sum_{k=1}^{t_1} e_k^1(z) f_j^1(x,y) \\
= & \sum_{i=1}^{r_2} a_i^2(x) b_i^2(y,z) + \sum_{j=1}^{s_2} c_j^2(y) d_j^2(x,z) + \sum_{k=1}^{t_2} e_k^2(z) f_j^2(x,y). \end{align*} Applying the function $a_i^{1*}$ we get a decomposition of the type
\begin{equation} b_i^1(y,z) = \sum_{j=1}^{s_1} c_j^1(y) p_{i,j}^{1}(z) - \sum_{j=1}^{s_2} c_j^2(y) p_{i,j}^{12}(z) + \sum_{k=1}^{t_1} q_{i,k}^{1}(y) e_k^1(z) - \sum_{k=1}^{t_2} q_{i,k}^{12}(y) e_k^2(z). \label{previously equation 1}\end{equation} Here $2$ could instead have been taken to be an arbitrary element of $\{2,3,4\}$, so if we write the previous identity for the pair $(1,3)$ rather than for the pair $(1,2)$, and then subtract it from the identity for the pair $(1,2)$ then we get \[ \sum_{j=1}^{s_3} c_j^3(y) p_{i,j}^{13}(z) - \sum_{j=1}^{s_2} c_j^2(y) p_{i,j}^{12}(z) + \sum_{k=1}^{t_3} q_{i,k}^{13}(y) e_k^3(z) - \sum_{k=1}^{t_2} q_{i,k}^{12}(y) e_k^2(z) = 0.\] Applying the dual functions $c_j^{2*}$ then shows that \[p_{i,j}^{12} \in E^2 \oplus E^3\] for each $j \in [s_2]$. This argument also shows (by replacing $3$ by $4$) that \[p_{i,j}^{12} \in E^2 \oplus E^4\] for each $j \in [s_2]$. Because the linear subspaces $E^2, E^3, E^4$ are all in direct sum, we obtain \[p_{i,j}^{12} \in E^2\] for each $j \in [s_2]$. A similar argument shows that \[q_{i,k}^{12} \in C^2\] for each $k \in [t_2]$. Substituting in \eqref{previously equation 1} we obtain \[ b_i^1(y,z) = \sum_{j=1}^{s_1} c_j^1(y) p_{i,j}^{1}(z) + \sum_{k=1}^{t_1} q_{i,k}^{1}(y) e_k^1(z) + \sum_{j=1}^{s_2} \sum_{k=1}^{t_2} \lambda_{i,j,k}^{12} c_j^2(y) e_k^2(z) \] for some coefficients $\lambda_{i,j,k}^{12} \in \F$. Writing a similar equality with $3$ instead of $2$, subtracting it from the equality above, and using that the products $c_j^2 \otimes e_k^2$ and $c_j^3 \otimes e_k^3$ are all jointly independent we obtain that all coefficients $\lambda_{i,j,k}^{12}$ are equal to $0$, and hence the desired expression \eqref{decomposition of b_i^u}. We conclude \eqref{decomposition of d_j^u} and \eqref{decomposition of f_k^u} similarly. \end{proof}

Given Lemma \ref{Intermediate result on decompositions}, Proposition \ref{A tensor with four independent decompositions is zero} is now within reach.

\begin{proof}[Proof of Proposition \ref{A tensor with four independent decompositions is zero}]

Plugging in \eqref{decomposition of b_i^u}, \eqref{decomposition of d_j^u}, \eqref{decomposition of f_k^u} into \eqref{decomposition of T for each u} and gathering terms, we obtain \begin{align} T & = \sum_{i=1}^{r_1} \sum_{j=1}^{s_1} a_i^1 \otimes c_j^1 \otimes (p_{i,j}^1 + g_{i,j}^1) \nonumber \\
& + \sum_{i=1}^{r_1} \sum_{k=1}^{t_1} a_i^1 \otimes (q_{i,k}^1 + u_{i,k}^1) \otimes e_k^1 \nonumber \\
& + \sum_{j=1}^{s_1} \sum_{k=1}^{t_1} (h_{j,k}^1 + v_{j,k}^1) \otimes c_j^1 \otimes e_k^1 \label{Decomposition of T with factored sums} \end{align} and a similar decomposition with $2$ instead of $1$. Writing the equality between the two decompositions shows in particular that  \[\sum_{i=1}^{r_1} \sum_{j=1}^{s_1} a_i^1 \otimes c_j^1 \otimes (p_{i,j}^1 + g_{i,j}^1) - \sum_{i=1}^{r_2} \sum_{j=1}^{s_2} a_i^2 \otimes c_j^2 \otimes (p_{i,j}^2 + g_{i,j}^2)\] belongs to the linear subspace \[\F^{n_1} \otimes \F^{n_2}  \otimes (E^1 \oplus E^2).\] Because the products $a_i^1 \otimes c_j^1$ and $a_i^2 \otimes c_j^2$ are all linearly independent, Lemma \ref{lemma on every element spanned} shows that \[p_{i,j}^1 + g_{i,j}^1 \in E^1 \oplus E^2\] for each $(i,j) \in [r_1] \times [s_1]$. We obtain analogous statements for the sums \[q_{i,k}^1 + u_{i,k}^1 \text{, } h_{j,k}^1 + v_{j,k}^1 \text{, } p_{i,j}^2 + g_{i,j}^2 \text{, } q_{i,k}^2 + u_{i,k}^2 \text{, } h_{j,k}^2 + v_{j,k}^2.\] Substituting in the decomposition \eqref{Decomposition of T with factored sums} we obtain that $T$ belongs to the linear subspace \[ (A^1 \otimes C^1 \otimes E^1) \oplus (A^2 \otimes C^1 \otimes E^1) \oplus (A^1 \otimes C^2 \otimes E^1) \oplus (A^1 \otimes C^1 \otimes E^2),\] and similarly with the roles of $1$ and $2$ exchanged. The intersection of these two linear subspaces is $\{0\}$, so $T=0$. \end{proof}

The statement of Proposition \ref{A tensor with four independent decompositions is zero} does not appear to be strong enough to deduce Theorem \ref{Main theorem} for order-$3$ tensors. In particular, an unsuccessful attempt at a deduction runs as follows. Given any set of minimal-length decompositions of a tensor with slice rank $k$ (indexed by $\t \in [h]$) with size $h \ge 4$, Proposition \ref{A tensor with four independent decompositions is zero} shows that without loss of generality, every $\t \in [h]$ is such that at least one of the intersections \begin{equation} A^\t \cap (A^1 + A^2 + A^3) \text{, } C^\t \cap (C^1 + C^2 + C^3) \text{, } E^\t \cap (E^1 + E^2 + E^3) \label{non-empty intersection}\end{equation} is non-empty. Because the field $|\F|$ is assumed to be finite, we can write $[h]$ as a union of at most $3|\F|^{3k}$ sets, indexed by an element of [3] indicating one of the intersections \eqref{non-empty intersection} which is non-empty (chosen arbitrarily among the possibilities if more than one intersection is non-empty), and by a non-zero vector in this intersection. Restricting our attention to one of these sets, without loss of generality there exists $a: [n_1] \to \F$ such that all decompositions of $T$ are, after a change of basis in the first of the three parts of the decomposition, of the type \[a(x) b^\t(y,z) + \sum_{i=1}^{r_\t-1} a_i^\t(x) b_i^\t(y,z) + \sum_{j=1}^{s_\t} c_j^\t(y) d_j^\t(x,z) + \sum_{k=1}^{t_\t} e_k^\t(z) f_k^\t(x,y)\] with $r_\t+s_\t+t_\t = k$. The tensor $T^\t$ defined by \[T^\t(x,y,z) = T(x,y,z) - a(x) b^\t(y,z)\] must then have slice rank $k-1$ (if it were strictly less than that, then the subadditivity of the slice rank would show that $\sr T < k$). We could then aim to induct on the slice rank of $k$ by applying on each $T^\t$ the inductive hypothesis stating that the number of possibilities for the one-variable functions in minimal-length decompositions is bounded by some function of $d,k,\F$ only. The inductive step however does not work, because although the one-variable function $a$ is the same for every $\t$, the number of possibilities for the two-variable function $b^\t$ (and hence for the tensor $T^\t$) could potentially be unbounded.

To transform this attempt into the proof from Section \ref{Section: Deduction of the structure theorem on slice rank decompositions}, we want to prove a strengthening of Proposition \ref{A tensor with four independent decompositions is zero} which does not require us to use the auxiliary tensors $T^\t$ and instead allows us to work with decompositions of $T$ directly. A suitable formulation is given by the following statement, which is the special case $d=3$ of Theorem \ref{Main theorem}.

\begin{proposition}\label{A tensor with four common-independent decompositions has slice rank at most the common part}

Let $T: [n_1] \times [n_2] \times [n_3] \to \F$ be an order-3 tensor. Assume that for each $\t \in [4]$ we have a decomposition \begin{align} T(x,y,z) & = \sum_{i=1}^{r_0} a_i^0(x) b_i^{0,\t}(y,z) + \sum_{j=1}^{s_0} c_j^0(y) d_j^{0,\t}(x,z) + \sum_{k=1}^{t_0} e_k^0(z) f_k^{0,\t}(x,y) \nonumber \\ & + \sum_{i=1}^{r_\t} a_i^\t(x) b_i^\t(y,z) + \sum_{j=1}^{s_\t} c_j^\t(y) d_j^\t(x,z) + \sum_{k=1}^{t_\t} e_k^\t(z) f_k^\t(x,y) \label{common-independent decomposition of T for each u} \end{align} for some nonnegative integers $r_\t, s_\t, t_\t$. Assume that the families \begin{equation} (a_i^\b: \b \in \{0\} \cup [4], i \in [r_\b]) \text{, } (c_j^\b: \b \in \{0\} \cup [4], j \in [s_\b]) \text{, } (e_k^\b: \b \in \{0\} \cup [4], k \in [t_\b]) \label{linearly independent families for four sets with a center} \end{equation} are each linearly independent. Then $T$ has a decomposition of the type \[ \sum_{i=1}^{r_0} a_i^0(x) b_i(y,z) + \sum_{j=1}^{s_0} c_j^0(y) d_j(x,z) + \sum_{k=1}^{t_0} e_k^0(z) f_k(x,y)\] and in particular has slice rank at most $r_0+s_0+t_0$.

\end{proposition}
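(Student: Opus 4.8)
The plan is to mimic the structure of the proof of Proposition~\ref{A tensor with four independent decompositions is zero}, but carrying the ``center'' subspaces $A^0 = \langle a_i^0 \rangle$, $C^0 = \langle c_j^0 \rangle$, $E^0 = \langle e_k^0 \rangle$ along throughout. First I would prove the analogue of Lemma~\ref{Intermediate result on decompositions} in this relative setting: for each $\t \in [4]$ and each two-variable function of the form $b_i^{0,\t}$ or $b_i^\t$, by writing the equality of the $\t$th and $\t'$th decompositions and applying a dual function $a_i^{\t*}$ (for the petal functions) or $a_i^{0*}$ (for the center functions), we obtain an expression of that two-variable function as a sum of products $c \otimes p$ and $q \otimes e$ where each specified one-variable factor lies in $C^0 \oplus C^\t \oplus C^{\t'}$ or $E^0 \oplus E^\t \oplus E^{\t'}$ respectively. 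Running this for $\t' = $ two different values in $\{2,3,4\} \setminus \{\t\}$ and using that $C^2, C^3, C^4$ (resp.\ $E^2, E^3, E^4$) are in direct sum over $C^0$, then subtracting off and using the joint independence of the relevant products $c_j^{\t'} \otimes e_k^{\t'}$, should collapse everything down to show that each $b_i^{0,\t}$ and $b_i^\t$ has a decomposition with specified factors only in $C^0 \oplus C^\t$ and $E^0 \oplus E^\t$; symmetrically for the $d$'s and $f$'s.

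Next I would plug these refined decompositions back into \eqref{common-independent decomposition of T for each u}, gather terms as in the proof of Proposition~\ref{A tensor with four independent decompositions is zero}, and conclude that $T$ lies in a sum of tensor-product spaces of the shape $A^{0,\t} \otimes C^{0,\t} \otimes E^0$ and its coordinate permutations, where $A^{0,\t} = A^0 \oplus A^\t$, etc. Writing this for two values of $\t$, say $\t = 1$ and $\t = 2$, and intersecting the two resulting subspaces: since $A^1$ and $A^2$ are in direct sum over $A^0$ (and likewise for $C$, $E$), the intersection collapses to the ``purely center'' part, namely $T$ lies in $(A^0 \otimes C^0 \otimes \F^{n_3}) + (A^0 \otimes \F^{n_2} \otimes E^0) + (\F^{n_1} \otimes C^0 \otimes E^0)$. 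This is exactly the statement that $T$ admits a decomposition $\sum_i a_i^0(x) b_i(y,z) + \sum_j c_j^0(y) d_j(x,z) + \sum_k e_k^0(z) f_k(x,y)$, whence $\sr T \le r_0 + s_0 + t_0$.

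The main obstacle I anticipate is the bookkeeping in the relative version of Lemma~\ref{Intermediate result on decompositions}: unlike in the zero-center case, when we apply a dual function to the $\t$th decomposition we now pick up contributions from both the center functions $a_i^0$ and the petal functions $a_i^\t$, and the ``cross'' spaces one has to control are $C^0 \oplus C^\t \oplus C^{\t'}$ rather than just $C^\t \oplus C^{\t'}$, so the direct-sum arguments must be run modulo the center. One has to be careful that the elimination really does terminate with factors living in $C^0 \oplus C^\t$ (not something larger still involving $C^0$ ambiguously), which is why having \emph{four} petals $\t \in [4]$ — one more than the three coordinate directions — is essential, exactly as in the $h > d$ hypothesis of Theorem~\ref{At most d decompositions}. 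A secondary, more bureaucratic point is the final intersection step: one must verify carefully that intersecting the two subspaces (for $\t=1$ and $\t=2$) genuinely kills all the petal directions and leaves precisely the three center blocks, using that $A^0, A^1, A^2$ are jointly independent and similarly in the other two coordinates; this is where the hypothesis that the families \eqref{linearly independent families for four sets with a center} are linearly independent gets used in full.
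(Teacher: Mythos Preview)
Your overall strategy matches the paper's: prove a relative version of Lemma~\ref{Intermediate result on decompositions} (this is the paper's Lemma~\ref{Intermediate result on decompositions with common part}), substitute back, and then compare two values of $\t$ to force $T$ into the ``center'' subspace. Two points deserve correction, and one comparison is worth making.

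First, your claim about the \emph{center} two-variable functions $b_i^{0,\t}$ is wrong. Applying $a_i^{0*}$ to the equality of decompositions $\t$ and $\t'$ yields information only about the \emph{difference} $b_i^{0,\t}-b_i^{0,\t'}$, not about $b_i^{0,\t}$ itself, since $a_i^0$ appears in both decompositions. Fortunately this is harmless: the term $a_i^0(x)b_i^{0,\t}(y,z)$ already lies in $A^0\otimes\F^{n_2}\otimes\F^{n_3}\subset R_{\mathrm{target}}$ regardless of what $b_i^{0,\t}$ is, so no structural information about these functions is needed. The paper accordingly proves the intermediate lemma only for the petal functions $b_i^\t,d_j^\t,f_k^\t$.

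Second, the subspace you write at the end, $(A^0\otimes C^0\otimes\F^{n_3})+(A^0\otimes\F^{n_2}\otimes E^0)+(\F^{n_1}\otimes C^0\otimes E^0)$, is \emph{not} the right target and $T$ need not lie in it (take $s_0=t_0=0$ and $T=a_1^0\otimes b$ for generic $b$). The correct subspace is $R_{\mathrm{target}}=A^0\otimes\F^{n_2}\otimes\F^{n_3}+\F^{n_1}\otimes C^0\otimes\F^{n_3}+\F^{n_1}\otimes\F^{n_2}\otimes E^0$, which is exactly the verbal conclusion you state immediately afterwards; presumably a slip, but it matters because your ``intersection'' heuristic must land there and not in the smaller space.

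On that intersection step: the paper does not argue by a bare subspace intersection. Instead it subtracts a center piece $S$, writes $T_1+T_\Delta=T_2$, gathers terms across the equality (tracking that any term forced to change sides carries a factor $a_i^0$, $c_j^0$, or $e_k^0$ and hence stays in $R_{\mathrm{target}}$), and finally expands everything in the basis of products $a_i^{\b_1}\otimes c_j^{\b_2}\otimes e_k^{\b_3}$ and uses linear independence. This amounts to showing $(R_{\mathrm{target}}+R_1)\cap(R_{\mathrm{target}}+R_2)=R_{\mathrm{target}}$, but the verification is not as immediate as you suggest and is where the real work lies. A smaller methodological difference: for the relative intermediate lemma the paper invokes Proposition~\ref{A matrix with three common-independent decompositions has slice rank at most the common part} (the $d=2$ case) directly, rather than rerunning the subtraction argument by hand; this is cleaner and foreshadows the inductive proof for general $d$.
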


The bound of $r_0+s_0+t_0$ is sharp: if $T$ is a tensor with slice rank $k$, then it suffices to take the part with one-variable functions $a_i^0, c_j^0, e_k^0$ to be the same slice rank decomposition of $T$ with length $k$ for each $\t$, and to take $r_\t, s_\t, t_\t$ equal to $0$ for each $\t$.

Although the proof of Proposition \ref{A tensor with four common-independent decompositions has slice rank at most the common part} will in a way be slightly more complicated than the proof of Proposition \ref{A tensor with four independent decompositions is zero}, it will also in another way be more conceptually appealing: indeed, looking back at the argument about decompositions of two-variable functions which we had used in the proof of Proposition \ref{A tensor with four independent decompositions is zero}, we see that we had implicitly used and proved an analogue of Proposition \ref{A tensor with four common-independent decompositions has slice rank at most the common part} for $d=2$. This analogue, Proposition \ref{A matrix with three common-independent decompositions has slice rank at most the common part}, is the $d=2$ case of Theorem \ref{At most d decompositions} and furthermore lays the groundwork for the inductive argument on the order $d$ of the tensor that we will later use in Section \ref{Section: The proof in the general case} when dealing with the general case. Because the proof of Proposition \ref{A matrix with three common-independent decompositions has slice rank at most the common part} is contained in that of Proposition \ref{A tensor with four independent decompositions is zero}, now appears to be a good moment to pass to a slightly more abstract but more concise notation which we will use repeatedly in the remainder of the paper.

\begin{proposition} \label{A matrix with three common-independent decompositions has slice rank at most the common part}

Let $M: [n_1] \times [n_2] \to \F$ be a matrix. Assume that for each $\t \in [3]$ we have a decomposition \begin{align} M(x,y) & = \sum_{i=1}^{r_0} a_i^0(x) b_i^{0,\t}(y) + \sum_{j=1}^{s_0} d_j^{0,\t}(x) c_j^0(y) \nonumber \\ & + \sum_{i=1}^{r_\t} a_i^\t(x) b_i^\t(y) + \sum_{j=1}^{s_\t} d_j^\t(x) c_j^\t(y) \label{common-independent decomposition of A for each u} \end{align} for some nonnegative integers $r_\t, s_\t$. Assume that the families \begin{equation} (a_i^\b: \b \in \{0\} \cup [3], i \in [r_\b]) \text{, } (c_j^\b: \b \in \{0\} \cup [3], j \in [s_\b]) \label{linearly independent families for matrices} \end{equation} are both linearly independent. Then $M$ has a decomposition of the type \[ \sum_{i=1}^{r_0} a_i^0(x) b_i(y) + \sum_{j=1}^{s_0} d_j(x) c_j^0(y)\] and in particular has rank at most $r_0+s_0$.

\end{proposition}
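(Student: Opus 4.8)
The plan is to translate the statement into the language of subspaces of the matrix space $\F^{n_1}\otimes\F^{n_2}$ and then to read off the conclusion from a direct computation with a grading of that space, the hypothesis of three decompositions (rather than two) entering only through a trivial counting remark. This is in the same spirit as the arguments already used for Proposition \ref{A tensor with four independent decompositions is zero}, but streamlined.

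First I would fix notation: put $A^\b = \langle a_i^\b : i\in[r_\b]\rangle \subseteq \F^{n_1}$ and $C^\b = \langle c_j^\b : j\in[s_\b]\rangle \subseteq \F^{n_2}$ for $\b \in \{0\}\cup[3]$, so that the linear independence of the families \eqref{linearly independent families for matrices} says exactly that $A^0,A^1,A^2,A^3$ are in direct sum inside $\F^{n_1}$ and that $C^0,C^1,C^2,C^3$ are in direct sum inside $\F^{n_2}$. Set $V = A^0\otimes\F^{n_2} + \F^{n_1}\otimes C^0$. Viewing $M$ as an element of $\F^{n_1}\otimes\F^{n_2}$, the conclusion is equivalent to $M\in V$: if $M\in V$ I split $M = M'+M''$ with $M'\in A^0\otimes\F^{n_2}$ and $M''\in\F^{n_1}\otimes C^0$, write $M' = \sum_{i=1}^{r_0} a_i^0\otimes b_i$ and $M'' = \sum_{j=1}^{s_0} d_j\otimes c_j^0$ using that the $a_i^0$ span $A^0$ and the $c_j^0$ span $C^0$, and note $\rk M \le \rk M' + \rk M'' \le \dim A^0 + \dim C^0 = r_0+s_0$.

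Next I would use each hypothesis decomposition just once, to localise $M$. Decomposition \eqref{common-independent decomposition of A for each u} for a given $\t$ writes $M$ as a sum of four rank-one-type blocks whose column spaces lie in $A^0$, $A^\t$ and whose row spaces lie in $C^0$, $C^\t$, so $M \in V + A^\t\otimes\F^{n_2} + \F^{n_1}\otimes C^\t$ for each $\t\in[3]$; it therefore suffices to show $\bigcap_{\t\in[3]}\big(V + A^\t\otimes\F^{n_2} + \F^{n_1}\otimes C^\t\big) = V$. I would pick subspaces $B\subseteq\F^{n_1}$, $D\subseteq\F^{n_2}$ with $\F^{n_1} = A^0\oplus A^1\oplus A^2\oplus A^3\oplus B$ and $\F^{n_2} = C^0\oplus C^1\oplus C^2\oplus C^3\oplus D$ (possible by the directness just recorded), and use the induced decomposition of $\F^{n_1}\otimes\F^{n_2}$ into the twenty-five pieces $P\otimes Q$ with $P\in\{A^0,A^1,A^2,A^3,B\}$ and $Q\in\{C^0,C^1,C^2,C^3,D\}$. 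From $A^0\otimes\F^{n_2}=\bigoplus_Q A^0\otimes Q$ and the analogous identities, $V$ is the sum of the pieces with $P=A^0$ or $Q=C^0$, and $V + A^\t\otimes\F^{n_2}+\F^{n_1}\otimes C^\t$ is the sum of the pieces with $P\in\{A^0,A^\t\}$ or $Q\in\{C^0,C^\t\}$. Hence if $M$ lies in all three of the latter subspaces, the $P\otimes Q$-component of $M$ vanishes whenever $P\neq A^0$, $Q\neq C^0$, and for every $\t\in[3]$ at least one of $P=A^\t$, $Q=C^\t$ holds; attaching to $P$ and to $Q$ the matching index in $\{1,2,3,\ast\}$, those two indices together meet $[3]$ in at most two points, so some $\t\in[3]$ makes both fail, and every such component of $M$ is zero. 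Thus $M\in V$.

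I do not expect a genuine obstacle here. The whole content is the last-step counting remark that two labels cannot cover all three values of $\t$ — exactly where the hypothesis $h=d+1=3$ is needed, two decompositions being insufficient as already witnessed by Construction \ref{Construction 2} and Construction \ref{Construction 3} (the piece $A^1\otimes C^2$ would then survive). The only care required is bookkeeping: checking that the degenerate cases in which some $A^\t$, $C^\t$, $B$ or $D$ is zero are harmless (a zero direct summand contributes nothing, and a decomposition with, say, $r_\t=0$ simply omits the relevant block), and verifying the two routine identifications of subspaces with sums of pieces of the grading.
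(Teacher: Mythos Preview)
Your proof is correct, but it takes a genuinely different route from the paper's. The paper works with dual functions: from the equality of decompositions $1$ and $2$ it applies $a_i^{1*}$ to get $b_i^1 \in C^0\oplus C^1\oplus C^2$, and from the equality of $1$ and $3$ it gets $b_i^1 \in C^0\oplus C^1\oplus C^3$, whence $b_i^1 \in C^0\oplus C^1$; similarly $d_j^1 \in A^0\oplus A^1$. Substituting back into decomposition $1$ then yields the tighter localisation $M \in V \oplus (A^1\otimes C^1)$, and by symmetry $M \in V \oplus (A^2\otimes C^2)$; the intersection of these two is $V$. So the paper first refines the individual one-variable functions $b_i^\t, d_j^\t$ and only needs two of the resulting localisations to conclude.

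Your argument instead reads off from each decomposition separately the coarser containment $M \in V + A^\t\otimes\F^{n_2} + \F^{n_1}\otimes C^\t$, and then compensates by intersecting all three at once via a global $5\times 5$ grading of $\F^{n_1}\otimes\F^{n_2}$ and a pigeonhole observation (two labels cannot cover three values of $\t$). This is more symmetric and more elementary, avoiding dual functions entirely; it also makes the role of the hypothesis $h>d$ completely transparent. What the paper's approach buys is that it pins down the individual $b_i^\t$ and $d_j^\t$, which is exactly the kind of intermediate information (cf.\ Lemma \ref{Intermediate result on decompositions with common part}) that the inductive proof of Theorem \ref{At most d decompositions} for general $d$ relies on; your grading argument, while slicker here, does not obviously feed into that induction.
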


\begin{proof}

For every $\b \in \{0\} \cup [3]$ we respectively write $A^{\b}$ and $C^{\b}$ for the linear subspaces $\langle a_i^\b: i \in [r_\b] \rangle$ and $\langle c_j^\b: j \in [s_\b] \rangle$. We define $(a_i^{\b}), (c_j^{\b})$ to be dual families to the families \eqref{linearly independent families for matrices}. Writing the equality between the decompositions $1$ and $2$ and applying the dual functions $a_i^{1*}$ we obtain $b_i^{1} \in C^0 \oplus C^1 \oplus C^2$. Similarly, $b_i^{1} \in C^0 \oplus C^1 \oplus C^3$. Because the subspaces $C^0 \oplus C^1$, $C^2$ and $C^3$ are in direct sum, we have $b_i^{1} \in C^0 \oplus C^1$. Similarly using the dual functions $c_j^{1*}$ we show $d_j^{1} \in A^0 \oplus A^1$. Using the decomposition $1$ of $M$ we obtain that \[ M \in A^0 \otimes \F^{n_2} + \F^{n_1} \otimes C^0 + A^1 \otimes (C^0 \oplus C^1) + (A^0 \oplus A^1) \otimes C^1,\] which simplifies to \[M \in (A^0 \otimes \F^{n_2} + \F^{n_1} \otimes C^0) \oplus A^1 \otimes C^1.\] Similarly the decomposition $2$ provides \[M \in (A^0 \otimes \F^{n_2} + \F^{n_1} \otimes C^0) \oplus A^2 \otimes C^2.\] The linear subspaces \[A^0 \otimes \F^{n_2} + \F^{n_1} \otimes C^0, A^1 \otimes C^1, A^2 \otimes C^2\] are all in direct sum, so \[M \in A^0 \otimes \F^{n_2} + \F^{n_1} \otimes C^0\] and the decomposition \eqref{common-independent decomposition of A for each u} follows. \end{proof}

Using Proposition \ref{A matrix with three common-independent decompositions has slice rank at most the common part} we are now able to obtain a generalisation of Lemma \ref{Intermediate result on decompositions} in rather short order. Throughout the remainder of this section, for every $\b \in \{0\} \cup [4]$ we respectively write $A^{\b}, C^{\b}, E^{\b}$ for the linear subspaces \[\langle a_i^\b: i \in [r_\b] \rangle \text{, } \langle c_j^\b: j \in [s_\b] \rangle \text{, } \langle e_k^\b: k \in [t_\b] \rangle,\] and define dual families $(a_i^{\b*})$,$(c_j^{\b*})$, $(e_k^{\b*})$ to the respective families \eqref{linearly independent families for four sets with a center}.

\begin{lemma} \label{Intermediate result on decompositions with common part} Let $T: [n_1] \times [n_2] \times [n_3] \to \F$ be an order-$3$ tensor. Under the assumptions of Proposition \ref{A tensor with four common-independent decompositions has slice rank at most the common part}, for each $\t \in [4]$ we can write decompositions of the type \begin{align} b_i^\t(y,z) & = \sum_{j=1}^{s_0} c_j^0(y) p_{i,j}^{0,\t}(z) + \sum_{k=1}^{t_0} q_{i,k}^{0,\t}(y) e_k^0(z) + \sum_{j=1}^{s_\t} c_j^\t(y) p_{i,j}^\t(z) + \sum_{k=1}^{t_\t} q_{i,k}^\t(y) e_k^\t(z) \label{decomposition of b_i^u in proposition with common part} \\
d_j^\t(x,z) & = \sum_{i=1}^{r_0} a_i^0(x) g_{i,j}^{0,\t}(z) + \sum_{k=1}^{t_0} h_{j,k}^{0,\t}(x) e_k^0(z) + \sum_{i=1}^{r_\t} a_i^\t(x) g_{i,j}^\t(z) + \sum_{k=1}^{t_\t} h_{j,k}^\t(x) e_k^\t(z)  \label{decomposition of d_j^u in proposition with common part}\\
f_k^\t(x,y) & = \sum_{i=1}^{r_0} a_i^0(x) u_{i,k}^{0,\t}(y) + \sum_{j=1}^{s_0} v_{j,k}^{0,\t}(x) c_j^0(y) + \sum_{i=1}^{r_\t} a_i^\t(x) u_{i,k}^\t(y) + \sum_{j=1}^{s_\t} v_{j,k}^\t(x) c_j^\t(y) \label{decomposition of f_k^u in proposition with common part}. \end{align}

\end{lemma}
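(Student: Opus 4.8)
The plan is to deduce the lemma from its two-dimensional shadow, Proposition~\ref{A matrix with three common-independent decompositions has slice rank at most the common part}, applied one slice at a time. By the symmetry of the three coordinates it suffices to produce the decomposition \eqref{decomposition of b_i^u in proposition with common part} for a fixed value of $\t$, say $\t=1$, and a fixed $i\in[r_1]$: the expressions \eqref{decomposition of d_j^u in proposition with common part} and \eqref{decomposition of f_k^u in proposition with common part} follow by the same argument with $c_j^{1*}$, respectively $e_k^{1*}$, in place of $a_i^{1*}$, and the other values of $\t$ are obtained after permuting $[4]$.

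First I would apply the dual function $a_i^{1*}$ to the identity expressing $T$ through its decomposition \eqref{common-independent decomposition of T for each u} for each of the three indices $\t\in\{2,3,4\}$. Since $a_i^{1*}$ is dual to the whole family \eqref{linearly independent families for four sets with a center}, it annihilates every centre function $a_{i'}^0$ and every petal function $a_{i'}^\t$, so each such identity collapses to a decomposition of the single matrix $T_i:=a_i^{1*}.T$ of the form
\[
T_i(y,z)=\sum_{j=1}^{s_0} c_j^0(y)\,(a_i^{1*}.d_j^{0,\t})(z)+\sum_{k=1}^{t_0} (a_i^{1*}.f_k^{0,\t})(y)\,e_k^0(z)+\sum_{j=1}^{s_\t} c_j^\t(y)\,(a_i^{1*}.d_j^\t)(z)+\sum_{k=1}^{t_\t} (a_i^{1*}.f_k^\t)(y)\,e_k^\t(z).
\]
These three decompositions of $T_i$ share the one-variable functions $c_1^0,\dots,c_{s_0}^0$ in the first coordinate and $e_1^0,\dots,e_{t_0}^0$ in the second, their petal functions are $c_j^\t,e_k^\t$ for $\t\in\{2,3,4\}$, and the required linear independence of the families $(c_j^\b)_{\b,j}$ and $(e_k^\b)_{\b,k}$ over $\b\in\{0,2,3,4\}$ is inherited from \eqref{linearly independent families for four sets with a center}. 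Hence Proposition~\ref{A matrix with three common-independent decompositions has slice rank at most the common part} applies and yields functions $\tilde p_{i,j},\tilde q_{i,k}$ with
\[
T_i(y,z)=\sum_{j=1}^{s_0} c_j^0(y)\,\tilde p_{i,j}(z)+\sum_{k=1}^{t_0}\tilde q_{i,k}(y)\,e_k^0(z).
\]

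It then remains to read off $b_i^1$. Applying $a_i^{1*}$ to the decomposition \eqref{common-independent decomposition of T for each u} of $T$ indexed by $\t=1$ and isolating the term coming from $b_i^1$ gives
\[
b_i^1(y,z)=T_i(y,z)-\sum_{j=1}^{s_0} c_j^0(y)\,(a_i^{1*}.d_j^{0,1})(z)-\sum_{k=1}^{t_0}(a_i^{1*}.f_k^{0,1})(y)\,e_k^0(z)-\sum_{j=1}^{s_1} c_j^1(y)\,(a_i^{1*}.d_j^1)(z)-\sum_{k=1}^{t_1}(a_i^{1*}.f_k^1)(y)\,e_k^1(z),
\]
and substituting the expression just obtained for $T_i$ produces a decomposition of $b_i^1$ of exactly the shape \eqref{decomposition of b_i^u in proposition with common part}, with $p_{i,j}^{0,1}=\tilde p_{i,j}-a_i^{1*}.d_j^{0,1}$, $q_{i,k}^{0,1}=\tilde q_{i,k}-a_i^{1*}.f_k^{0,1}$, $p_{i,j}^1=-\,a_i^{1*}.d_j^1$ and $q_{i,k}^1=-\,a_i^{1*}.f_k^1$. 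Running the same argument with $\{1,3,4\}$, $\{1,2,4\}$, $\{1,2,3\}$ in place of $\{2,3,4\}$ handles the remaining values of $\t$, and the parallel argument with $c_j^{\t*}$ and $e_k^{\t*}$ in place of $a_i^{\t*}$ yields \eqref{decomposition of d_j^u in proposition with common part} and \eqref{decomposition of f_k^u in proposition with common part}.

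The only genuine content is the verification that the three matrices fed into Proposition~\ref{A matrix with three common-independent decompositions has slice rank at most the common part} satisfy its hypotheses: the key point is that $a_i^{1*}$, being dual to the whole jointly independent family \eqref{linearly independent families for four sets with a center}, kills not just the other petal functions of decomposition $1$ but also every centre function $a_{i'}^0$ and every petal function of decompositions $2,3,4$, which is precisely what puts $T_i$ into the "common part plus petals" form. This is also where the hypothesis of four decompositions enters — for each fixed $\t$ we need the other three to supply the three matrix decompositions — matching the bound $h>d$ of Theorem~\ref{At most d decompositions}; everything else is bookkeeping.
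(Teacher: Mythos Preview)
Your proof is correct and follows essentially the same route as the paper: apply a dual function $a_i^{1*}$ to obtain a two-variable tensor, then invoke Proposition~\ref{A matrix with three common-independent decompositions has slice rank at most the common part} using the three remaining decompositions as petals. The only cosmetic difference is that the paper applies $a_i^{1*}$ to the \emph{equalities} between decomposition $1$ and decompositions $2,3,4$, thereby obtaining three expressions for $b_i^1$ directly and running Proposition~\ref{A matrix with three common-independent decompositions has slice rank at most the common part} with the enlarged centre $\{c_j^0,c_j^1\}$, $\{e_k^0,e_k^1\}$; you instead apply $a_i^{1*}$ to decompositions $2,3,4$ individually, run the proposition on $T_i=a_i^{1*}.T$ with the smaller centre $\{c_j^0\}$, $\{e_k^0\}$, and only bring in the $\t=1$ terms at the final subtraction---this is arguably a touch cleaner but amounts to the same computation.
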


\begin{proof}
Writing an equality between the decompositions $1,2$ of $T$ and applying the dual functions $a_i^{1*}$ we get a decomposition of the type \begin{align*} b_i^1(y,z) & = \sum_{j=1}^{s_0} c_j^0(y) (p_{i,j}^{0,1} - p_{i,j}^{0,12})(z) + \sum_{j=1}^{s_1} c_j^1(y) p_{i,j}^{1}(z) - \sum_{j=1}^{s_2} c_j^2(y) p_{i,j}^{12}(z)\\ & + \sum_{k=1}^{t_0} (q_{i,k}^{0,1} - q_{i,k}^{0,12})(y) e_k^0(z) + \sum_{k=1}^{t_1} q_{i,k}^{1}(y) e_k^1(z) - \sum_{k=1}^{t_2} q_{i,k}^{12}(y) e_k^2(z).\end{align*} We can write similar decompositions by replacing $2$ by $3$ or by $4$. We then apply Proposition \ref{A matrix with three common-independent decompositions has slice rank at most the common part}, with the pair of coordinates $(y,z)$ instead of $(x,y)$, with \[(c_j^\b: \b \in \{0,1\}, j \in [s_\b])\] forming a basis of center functions, \[(c_j^2: j \in [s_2]), (c_j^3: j \in [s_3]), (c_j^4: j \in [s_4])\] forming bases of petal functions, and similarly for the remaining coordinate.  The desired expressions \eqref{decomposition of b_i^u in proposition with common part}, \eqref{decomposition of d_j^u in proposition with common part}, \eqref{decomposition of f_k^u in proposition with common part} then follow. \end{proof}

There again, having established Lemma \ref{Intermediate result on decompositions with common part} we are ready to finish the proof of Proposition \ref{A tensor with four common-independent decompositions has slice rank at most the common part}.

\begin{proof}[Proof of Proposition \ref{A tensor with four common-independent decompositions has slice rank at most the common part}]

Writing the equality between the two decompositions \eqref{common-independent decomposition of T for each u} of $T$ for $\t=1$ and $\t=2$ and applying the dual functions $a_i^{0*}$ we obtain \begin{equation} b_i^{0,1} - b_i^{0,2} \in (C^0 \oplus C^1 \oplus C^2) \otimes \F^{n_3} + \F^{n_2} \otimes (E^0 \oplus E^1 \oplus E^2) \label{subspace for the difference b_i^{0,1} - b_i^{0,2}}. \end{equation} We obtain similar statements for the differences $d_j^{0,1} - d_j^{0,2}$ and $f_k^{0,1} - f_k^{0,2}$.

We define linear subspaces of $\F^{n_1} \otimes \F^{n_2} \otimes \F^{n_3}$, by \begin{align*} R_{\mathrm{target}} & = A^0 \otimes \F^{n_2} \otimes \F^{n_3} + \F^{n_1} \otimes C^0 \otimes \F^{n_3} + \F^{n_1} \otimes \F^{n_2} \otimes E^0\\
R_1 & = (A^1 \otimes C^1 \otimes \F^{n_3}) + (A^1 \otimes \F^{n_2} \otimes E^1) + (\F^{n_1} \otimes C^1 \otimes E^1)\\
R_2 & = (A^2 \otimes C^2 \otimes \F^{n_3}) + (A^2 \otimes \F^{n_2} \otimes E^2) + (\F^{n_1} \otimes C^2 \otimes E^2) \end{align*} and by letting $R$ be the sum of all linear subspaces of $\F^{n_1} \otimes \F^{n_2} \otimes \F^{n_3}$ of the type $Y_1 \otimes Y_2 \otimes Y_3$ satisfying at least one of the equalities \[Y_1 = A^0, Y_2 = C^0, Y_3 = C^0,\] and at least two of the properties \[Y_1 \in \{A^0, A^1, A^2 \} \text{, } Y_2 \in \{C^0, C^1, C^2 \} \text{, } Y_3 \in \{E^0, E^1, E^2 \}.\]

The tensor $S$ defined by \[S(x,y,z) = \sum_{i=1}^{r_0} a_i^0(x) b_i^{0,2}(y,z) + \sum_{j=1}^{s_0} c_j^0(y) d_j^{0,2}(x,z) + \sum_{k=1}^{t_0} e_k^0(z) f_k^{0,2}(x,y)\] belongs to $R_{\mathrm{target}}$. Subtracting this decomposition of $S$ from both sides of \eqref{common-independent decomposition of T for each u} we obtain an equality between two decompositions of $T-S$, which we can write as \begin{equation} T_1 + T_{\Delta} = T_2 \label{u,v and difference}, \end{equation} where \begin{align*} T_{\Delta} & = \sum_{i=1}^{r_0} a_i^0(x) (b_i^{0,1} - b_i^{0,2})(y,z) + \sum_{j=1}^{s_0} c_j^0(y) (d_j^{0,1} - d_j^{0,2})(x,z)+ \sum_{k=1}^{t_0} e_k^0(z) (f_k^{0,1} - f_k^{0,2})(x,y)\\
T_1 & = \sum_{i=1}^{r_1} a_i^1(x) b_i^1(y,z) + \sum_{j=1}^{s_1} c_j^1(y) d_j^1(x,z) + \sum_{k=1}^{t_1} e_k^1(z) f_k^1(x,y)\\
T_2 & = \sum_{i=1}^{r_2} a_i^2(x) b_i^2(y,z) + \sum_{j=1}^{s_2} c_j^2(y) d_j^2(x,z) + \sum_{k=1}^{t_2} e_k^2(z) f_k^2(x,y). \end{align*} From \eqref{subspace for the difference b_i^{0,1} - b_i^{0,2}} and the two other analogous identities for $d_j^{0,1} - d_j^{0,2}$ and $f_k^{0,1} - f_k^{0,2}$, the tensor $T_{\Delta}$ belongs to $R$. From Lemma \ref{Intermediate result on decompositions with common part} we know that \[T_1 \in R + R_1 \text{ and }T_2 \in R + R_2.\]

In the equality \eqref{u,v and difference} we gather terms of $T_{\Delta}, T_1, T_2$ which are of the same type, meaning that for instance if we have two terms \[a_i^1 \otimes c_j^0 \otimes p' \text{ and } a_i^1 \otimes c_j^0 \otimes p'',\] and $p'$,$p''$ are unspecified elements of $\F^{n_3}$, then we replace them by a single term \[a_i^1 \otimes c_j^0 \otimes (p'+p'') \text{ or } a_i^1 \otimes c_j^0 \otimes (p'-p'')\] as appropriate. We allow terms to change sides of the equation, and because we do so, the resulting equality is no longer between two decompositions of $T-S$, but between two decompositions of $T-S'$, for some new tensor $S'$. Any terms which appear on both sides of \eqref{u,v and difference} must necessarily have as a factor an element of one of the types $a_i^0$, $c_j^0$, $e_k^0$: indeed, if they do not, then their product of specified one-variable functions is necessarily of one of the types \[a_i^1 \otimes c_j^1, a_i^1 \otimes e_k^1, c_j^1 \otimes e_k^1, a_i^2 \otimes c_j^2, a_i^2 \otimes e_k^2, c_j^2 \otimes e_k^2,\] and in the case of the first three and last three the term can respectively only arise in the left-hand side and in the right-hand side. The difference $S'-S$ hence belongs to $R_{\mathrm{target}}$. Since $T-S$ also belongs to $R_{\mathrm{target}}$, we obtain that $T-S'$ belongs to $R_{\mathrm{target}}$.

Using Lemma \ref{lemma on every element spanned} and the linear independence of the families \eqref{linearly independent families for four sets with a center} we obtain that the terms involved in the resulting equality all belong to \[(A^0 \oplus A^1 \oplus A^2) \otimes (C^0 \oplus C^1 \oplus C^2) \otimes (E^0 \oplus E^1 \oplus E^2),\] and we can hence write both sides of the equality as linear combinations of terms of the type \begin{equation} a_i^{\b_1} \otimes c_j^{\b_2} \otimes e_k^{\b_3}. \label{product of three terms} \end{equation} The terms that have none of the indices $\b_1, \b_2, \b_3$ equal to $0$ can only come from a single side: the left-hand side if at least two indices $1$ are involved, the right-hand if at most one is. The linear independence of the families \eqref{linearly independent families for four sets with a center} (and hence of the products \eqref{product of three terms}) shows that the contribution of these terms is zero. We are left with an equality between terms which each have a factor of one of the types $a_i^0$, $c_j^0$, $e_k^0$, so either side belongs to $R_{\mathrm{target}}$. These sides are equal to $T-S'$, so $T-S'$ belongs to $R_{\mathrm{target}}$. Since $S'$ also does, we conclude that $T$ belongs to $R_{\mathrm{target}}$, and $T$ therefore has slice rank at most $r_0+s_0+t_0$, as desired. \end{proof}

\section{Proof of the theorem on sunflowers in the general case}\label{Section: The proof in the general case}

In this section we show Proposition \ref{order-$d$ decompositions of zero} and Theorem \ref{At most d decompositions} for general $d$, the proofs of which are based on and extend those of Proposition \ref{order-$3$ decompositions of zero} and Proposition \ref{A tensor with four common-independent decompositions has slice rank at most the common part} respectively. Whenever $X$ is a finite set and $m$ is a nonnegative integer, we write $\ct_m(X)$ for the set of $m$-tuples $(j_1, \dots, j_m)$ where $j_1, \dots, j_m$ are pairwise distinct elements of $X$.

In order to avoid excessively heavy notation we will for instance for a linear subspace $A_j$ of $\F^{n_j}$ write $A_j \otimes (\otimes_{j' \neq j} \F^{n_{j'}})$ for the tensor product that would more usually be written as \[(\otimes_{1 \le j' < j} \F^{n_{j'}}) \otimes A_j \otimes (\otimes_{j < j' \le d} \F^{n_{j'}}).\] However, this should not cause any confusion to the reader, because the dimension (in $[d]$) from which any term of any such product comes from will always be clearly identified.

We begin by proving Proposition \ref{order-$d$ decompositions of zero}. The key additional idea for this proof, compared to the $d=3$ case, is to gradually split terms of a slice rank decomposition, which originally are products of one specified one-variable function and one unspecified $(d-1)$-variable function, into products of $m$ specified one-variable functions and of one unspecified $(d-m)$-variable function, for $m=1,2,\dots,d-1$ before finally writing the decomposition as a product of $d$ specified one-variable functions. This process had been used in relation to the slice rank in the proof of Lemma 9 from the \emph{first} version of the manuscript \cite{Gowers}. Throughout the process, we collect terms together whenever they are of the same type, which may involve cancellations, but as these are all of the kind allowed in Definition \ref{zero form}, the definition of a tensor in zero form, we conclude that the original decomposition was in zero form.

\begin{proof}[Proof of Proposition \ref{order-$d$ decompositions of zero}] For every $j \in [d]$ let $(a_{j,1}^*, \dots, a_{j,r_j}^*)$ be a dual family to the family $(a_{j,1}, \dots, a_{j,r_j})$. For any fixed $j \in [d]$ and $i \in [r_j]$, applying the function $a_{j,i}^*$ to the original decomposition \begin{equation} \sum_{j=1}^d \sum_{i=1}^{r_j} a_{j,i}(x_j) b_{j,i}(\overline{x_j}) \label{original decomposition} \end{equation} of the zero tensor provides a decomposition of $b_{j,i}$ of the type \[b_{j,i}(\overline{x_j}) = \sum_{j' \neq j} \sum_{i'=1}^{r_{j'}} a_{j',i'}(x_{j'}) p_{((j,i),(j',i'))}(\overline{x_j, x_{j'}}).\] Substituting for every $j \in [d]$ and every $i \in [r_j]$ these expressions in the decomposition \eqref{original decomposition} we obtain \[\sum_{(j_1,j_2) \in \ct_2([d])} \sum_{i_1 \in [r_{j_1}], i_2 \in [r_{j_2}]} a_{j_1,i_1}(x_{j_1}) a_{j_2,i_2}(x_{j_2}) p_{((j_1,i_1),(j_2,i_2))}(\overline{x_{j_1}, x_{j_2}}).\] Gathering terms together whenever their sets $\{(j_1, i_1), (j_2, i_2)\}$ are the same, this decomposition of the zero tensor can be rearranged as \begin{equation} \sum_{1 \le j_1 < j_2 \le d} \sum_{i_1 \in [r_{j_1}], i_2 \in [r_{j_2}]} a_{j_1,i_1}(x_{j_1}) a_{j_2,i_2}(x_{j_2}) p_{\{(j_1,i_1),(j_2,i_2)\}}(\overline{x_{j_1}, x_{j_2}}). \label{decomposition after |J|=2} \end{equation} While doing this rearrangement we have potentially cancelled terms corresponding to $|J|=2$ in Definition \ref{zero form}.

After that, for any fixed $(j_1, j_2) \in \ct_2([d])$ and $(i_1,i_2) \in [r_{j_1}] \times [r_{j_2}]$, applying $a_{j_1,i_1}^* \otimes a_{j_2,i_2}^*$ leads to a decomposition \begin{align*} & p_{\{(j_1,i_1),(j_2,i_2)\}}(\overline{x_{j_1}, x_{j_2}}) = \sum_{j_3 \in [d] \setminus \{j_1, j_2\}} \sum_{i_3=1}^{r_{j_3}} a_{j_3,i_{j_3}}(x_{j_3}) p_{((j_1,i_1), (j_2,i_2), (i_3,j_3))}^{|J|=3, \mathrm{main}}(\overline{x_{j_1}, x_{j_2}, x_{j_3}}) + \\
& \sum_{(j_3, j_4) \in \ct_2([d] \setminus \{j_1, j_2\})} \sum_{i_3 \in [r_{j_3}], i_4 \in [r_{j_4}]} a_{j_3,i_{j_3}}(x_{j_3}) a_{j_4,i_{j_4}}(x_{j_4}) p_{((j_1,i_1), (j_2,i_2), (i_3,j_3), (i_4,j_4))}^{|J|=3, \mathrm{secondary}}(\overline{x_{j_1}, x_{j_2}, x_{j_3}, x_{j_4}}). \end{align*} The terms of the second type can in particular also be viewed as terms of the first type, so we may write the simpler expression \[p_{\{(j_1,i_1),(j_2,i_2)\}}(\overline{x_{j_1}, x_{j_2}}) = \sum_{j_3 \in [d] \setminus \{j_1, j_2\}} a_{j_3,i_{j_3}}(x_{j_3}) p_{((j_1,i_1), (j_2,i_2), (i_3,j_3))}(\overline{x_{j_1}, x_{j_2}, x_{j_3}}).\] Substituting in \eqref{decomposition after |J|=2} leads to the decomposition \[ \sum_{(j_1, j_2, j_3) \in \ct_3([d])} \sum_{i_1 \in [r_{j_1}], i_2 \in [r_{j_2}], i_3 \in [r_{j_3}]} a_{j_1,i_1}(x_{j_1}) a_{j_2,i_2}(x_{j_2}) a_{j_3,i_3}(x_{j_3}) p_{((j_1,i_1),(j_2,i_2),(j_3,i_3))}(\overline{x_{j_1}, x_{j_2}, x_{j_3}}). \] We then gather terms together, this time whenever their sets $\{(j_1, i_1), (j_2, i_2), (j_3,i_3)\}$ are the same. Our decomposition of the zero tensor now becomes \[\sum_{1 \le j_1 < j_2 < j_3 \le d} \sum_{i_1 \in [r_{j_1}], i_2 \in [r_{j_2}], i_3 \in [r_{j_3}]} a_{j_1,i_1}(x_{j_1}) a_{j_2,i_2}(x_{j_2}) a_{j_3,i_3}(x_{j_3}) p_{\{(j_1,i_1),(j_2,i_2),(j_3,i_3)\}}(\overline{x_{j_1}, x_{j_2}, x_{j_3}}).\] Again, while doing this rearrangement we may have cancelled terms corresponding to $|J|=3$ in Definition \ref{zero form}.

We then continue the process until our decomposition of the zero tensor has become of the type \[\sum_{1 \le j_1 < \dots < j_{d-1} \le d} \sum_{i_{j_1} \in [r_{j_1}], \dots, i_{j_{d-1}} \in [r_{j_{d-1}}]} \left( \prod_{1 \le t \le d-1} a_{j_t, i_{j_t}}(x_{j_t}) \right) p_{\{(j_1,i_1), \dots, (j_{d-1}, i_{d-1})\}}(x_j)\] where we write $j$ for the remaining element of $[d]$ aside from $j_1, \dots, j_{d-1}$. Until this point we may have cancelled terms corresponding to $2 \le |J| \le d-1$ in Definition \ref{zero form}. Applying products of the type \[a_{j_1,i_1}^* \otimes \dots \otimes a_{j_{d-1},i_{d-1}}^*\] we obtain that the functions \[p_{\{(j_1,i_1), \dots, (j_{d-1}, i_{d-1})\}}\] are linear combinations of the functions $a_{j,t_j}$. After cancelling terms corresponding to $|J|=d$ in Definition \ref{zero form}, our decomposition of the zero tensor becomes \[\sum_{i_1 \in [r_1], \dots, i_d \in [r_d]} \lambda_{i_1, \dots, i_d} a_{1,i_1} \otimes \dots \otimes a_{d,i_d} \] for some $\lambda_{i_1, \dots, i_d} \in \F$. These are necessarily all zero since the tensor products \[a_{1,i_1} \otimes \dots \otimes a_{d,i_d}\] are linearly independent. The result follows. \end{proof}

We finally move to the proof of Theorem \ref{At most d decompositions}. There again, we gradually split terms of decompositions as we did in the proof of Proposition \ref{order-$d$ decompositions of zero}, while carefully tracking down the specified one-variable functions in the terms that appear. The main guiding heuristic behind the computations that we will make is that at any point throughout the process, terms which have at least one specified one-variable function of the type $a_{j,i}^0$ as a factor are acceptable, as they do not get in the way of the desired conclusion, whereas for terms which have none of these we aim to ensure that all specified one-variable functions arising in the same product come from the same decomposition $\t$, and to that end we will use Theorem \ref{At most d decompositions} for tensors of lower-order, which leads to an inductive argument on the order $d$ of the tensor. As in Section \ref{Section: The proof for order-$3$ tensors}, notations such as $p_{(j,i),(j',i')}^{\t}$ will refer to functions depending only on a single decomposition $\t$, whereas notations such as $p_{(j,i),(j',i')}^{\t \t'}$ will refer to functions depending on both decompositions $\t, \t'$.

\begin{proof}[Proof of Theorem \ref{At most d decompositions}]

We proceed by induction on $d$. The base case is the case $d=2$, in other words Proposition \ref{A matrix with three common-independent decompositions has slice rank at most the common part}. Let us now assume $d \ge 3$, and suppose that the result has been shown for all $d' \in [2,d]$. We begin with the decompositions \begin{equation} \sum_{j=1}^d \left( \sum_{i=1}^{r_j^0} a_{j,i}^0(x_j) b_{j,i}^{0,\t}(\overline{x_j}) + \sum_{i=1}^{r_j^\t} a_{j,i}^{\t}(x_j) b_{j,i}^{\t}(\overline{x_j}) \right)\label{decompositions at the start of the proof of the sunflower result}\end{equation} of $T$ for each $\t \in [h]$ for some $h>d$. For $j \in [d]$ and every $\b \in \{0\} \cup [h]$ we define \[A_j^\b = \langle a_{j,i}^{\b}: i \in [r_j^\b] \rangle\] and for every $j \in [d]$ we choose a family \[(a_{j,i}^{\b*}: \b \in \{0\} \cup [h], i \in [r_j^\b])\] of dual functions to the family \[(a_{j,i}^{\b}: \b \in \{0\} \cup [h], i \in [r_j^\b]).\]

We start the first step of the splitting process. Let $j \in [d]$ and $i \in [r_j^1]$ be fixed. We write the equality between the decompositions $1$ and $2$ of $T$, and apply the function $a_{j,i}^{1*}$. This leads to an expression of the type \begin{align*} b_{j,i}^1(\overline{x_j}) & = \sum_{j' \neq j} \sum_{i'=1}^{r_{j'}^{1}} a_{j',i'}^1(x_{j'}) p_{(j,i),(j',i')}^{1}(\overline{x_j,x_{j'}}) \\
& - \sum_{j' \neq j} \sum_{i'=1}^{r_{j'}^{2}} a_{j',i'}^2(x_{j'}) p_{(j,i),(j',i')}^{12}(\overline{x_j,x_{j'}}) \\
& + \sum_{j' \neq j} \sum_{i'=1}^{r_{j'}^0} a_{j',i'}^0(x_{j'}) (p_{(j,i),(j',i')}^{0,1}-p_{(j,i),(j',i')}^{0,12})(\overline{x_j,x_{j'}}). \end{align*} We had started with at least $d+1$ decompositions of $T$, so there are at least $d$ of them that are distinct from $1$. Applying the inductive hypothesis to the order-$(d-1)$ tensor $b_{j,i}^1$, with for every $j' \neq j$ the functions $a_{j',i'}^0$ and $a_{j',i'}^1$ together forming a basis of center functions and the functions $a_{j',i'}^\t$ forming for each $\t \neq 1$ a basis of petal functions, we obtain \[b_{j,i}^1 \in (A_1^0 + A_1^1) \otimes (\otimes_{j' \notin \{j,1\}} \F^{n_{j'}}) + \dots + (\otimes_{j' \notin \{j,d\}} \F^{n_{j'}}) \otimes (A_d^0 + A_d^1).\] We likewise obtain such a description of the functions $b_{j,i}^{\t}$ for each $\t \in [h]$. Writing the equality between the decompositions $1$ and $2$ that we started with \eqref{decompositions at the start of the proof of the sunflower result} and applying the functions $a_{j,i}^{0*}$ we obtain \[b_{j,i}^{0,1} - b_{j,i}^{0,2} \in (A_1^0 + A_1^1 + A_1^2) \otimes (\otimes_{j' \notin \{j,1\}} \F^{n_{j'}}) + \dots + (\otimes_{j' \notin \{j,d\}} \F^{n_{j'}}) \otimes (A_d^0 + A_d^1 + A_d^2).\]

We write $S_0$ for the part \[\sum_{j=1}^d \sum_{i=1}^{r_j^0} a_{j,i}^0(x_j) b_{j,i}^{0,1}(\overline{x_j})\] of the decomposition $1$ of $T$ with central one-variable functions. This tensor belongs to the subspace \[R_{\mathrm{target}}= (A_1^0 \otimes \F^{n_2} \otimes \dots \otimes \F^{n_d}) + \dots + (\F^{n_1} \otimes \dots \otimes \F^{n_{d-1}} \otimes A_d^0).\]

We then consider decompositions of $T-S_0$. The equality between the resulting decompositions $1$ and $2$ can be written as \begin{equation} T_1 + T_{\Delta} = T_2, \label{equality between T_1, T_2, T_{Delta} in the general case} \end{equation} with 
\begin{align} T_1 & \in +_{(j_1,j_2) \in \ct_2([d])} (A_{j_1}^1 \otimes (A_{j_2}^0 \oplus A_{j_2}^1) \otimes (\otimes_{j \notin \{j_1,j_2\}} \F^{n_{j}})) \label{decomposition of T_1 in the proof for general d}\\
T_2 & \in +_{(j_1,j_2) \in \ct_2([d])} (A_{j_1}^2 \otimes (A_{j_2}^0 \oplus A_{j_2}^2) \otimes (\otimes_{j \notin \{j_1,j_2\}} \F^{n_{j}})) \label{decomposition of T_2 in the proof for general d}\\
T_\Delta & \in +_{(j_1,j_2) \in \ct_2([d])} (A_{j_1}^0 \otimes (A_{j_2}^0 \oplus A_{j_2}^1 \oplus A_{j_2}^2) \otimes (\otimes_{j \notin \{j_1,j_2\}} \F^{n_{j}})) \label{decomposition of T_Delta in the proof for general d} \end{align} where the unspecified $(d-2)$-variable functions in $\otimes_{j \notin \{j_1,j_2\}} \F^{n_{j}}$ from \eqref{decomposition of T_1 in the proof for general d} and \eqref{decomposition of T_2 in the proof for general d} respectively depend on the decompositions $1$ only and $2$ only.

In any of these three tensors, any term of the decomposition that we have obtained is split as a product of two specified one-variable functions and one unspecified $(d-2)$-variable function. Whenever some of the resulting terms have the same pair of specified one-variable functions, we gather them together into a single term, even if the original two terms come from different sides of the equality. For instance, the two terms \begin{align*} & a_{1,i_1}^{1}(x_1) a_{2,i_2}^{0}(x_2) p' (x_3, \dots, x_d) \\ & a_{1,i_1}^{1}(x_1) a_{2,i_2}^{0}(x_2) p''(x_3, \dots, x_d) \end{align*} for some unspecified functions $p', p''$ would be replaced by a single term \[a_{1,i_1}^{1}(x_1) a_{2,i_2}^{0}(x_2) p(x_3, \dots, x_d).\] Because we gather terms even if they come from different sides of the original equality, the resulting equality is in general no longer between two decompositions of $T-S_0$, but between two decompositions of $T-S_1$ for some new tensor $S_1$. However, since in the expressions of $T_1, T_2, T_{\Delta}$ given by \eqref{decomposition of T_1 in the proof for general d}, \eqref{decomposition of T_2 in the proof for general d}, \eqref{decomposition of T_Delta in the proof for general d} there is no term with both a function of the type $a_{j,i}^1$ and a function of the type $a_{j,i}^2$ as its specified two one-variable functions, any term which has neither of the two specified one-variable functions of the type $a_{j,i}^0$ must necessarily have both its specified one-variable functions of the type $a_{j,i}^1$, or both its specified one-variable functions of the type $a_{j,i}^2$, in which case the term may only arise in the left-hand side or in the right-hand side of \eqref{equality between T_1, T_2, T_{Delta} in the general case} respectively. The difference $S_1-S_0$ hence only involves terms which have some $a_{j,i}^0$ as a factor; in other words $S_1-S_0$ belongs to $R_{\mathrm{target}}$. Since $S_0$ belongs to $R_{\mathrm{target}}$, it follows that $S_1$ belongs to $R_{\mathrm{target}}$. Without loss of generality this equality is of the type \begin{align} & \sum_{1 \le j_1 < j_2 \le d} \sum_{i_1 \in [r_{j_1}^1], i_2 \in [r_{j_2}^1]} a_{j_1,i_1}^1(x_{j_1}) a_{j_2,i_2}^1(x_{j_2}) p_{\{(j_1,i_1),(j_2,i_2)\}}^1(\overline{x_{j_1},x_{j_2}}) \nonumber \\
+ & \sum_{(j_1,j_2) \in \ct_2([d])} \sum_{\b \in \{0,1,2\}} \sum_{i_1 \in [r_{j_1}^0], i_2 \in [r_{j_2}^\b]} a_{j_1,i_1}^0(x_{j_1}) a_{j_2,i_2}^\b(x_{j_2}) p_{((j_1,i_1),(j_2,i_2)), \b}^{12}(\overline{x_{j_1},x_{j_2}}) \nonumber \\
= & \sum_{1 \le j_1 < j_2 \le d} \sum_{i_1 \in [r_{j_1}^2], i_2 \in [r_{j_2}^2]} a_{j_1,i_1}^2(x_{j_1}) a_{j_2,i_2}^2(x_{j_2}) p_{\{(j_1,i_1),(j_2,i_2)\}}^2(\overline{x_{j_1},x_{j_2}}) \label{equality after the first step} \end{align} since terms with a factor of the type $a_{j_1,i_1}^0$ may be collected on either side of the equality. This finishes the first step of the splitting process.

We now start the second step. In the equality \eqref{equality after the first step}, we consider some distinct elements $j_1,j_2$ of $[d]$ and some $i_1 \in [r_{j_1}^1]$, $i_2 \in [r_{j_2}^1]$, and apply $a_{j_1,i_1}^{1*} \otimes a_{j_2,i_2}^{1*}$. This leads to \[p_{\{(j_1,i_1),(j_2,i_2)\}}^1 \in +_{j \notin \{j_1,j_2\}} ((A_{j}^0 \oplus A_{j}^1 \oplus A_{j}^2) \otimes (\otimes_{j' \notin \{j_1,j_2,j\}} \F^{n_{j'}})).\] Applying Theorem \ref{At most d decompositions} for order-$(d-2)$ tensors, with for every $j \notin \{j_1,j_2\}$ the functions $a_{j,i}^0$ and $a_{j,i}^1$ together forming a basis of center functions and the functions $a_{j,i}^\t$ forming for each $\t \neq 1$ a basis of petal functions, we obtain \[p_{\{(j_1,i_1),(j_2,i_2)\}}^1 \in +_{j \notin \{j_1,j_2\}} ((A_{j}^0 \oplus A_{j}^1) \otimes (\otimes_{j' \notin \{j_1,j_2,j\}} \F^{n_{j'}}))\] where the unspecified $(d-3)$-variable functions in $\otimes_{j' \notin \{j_1,j_2,j\}} \F^{n_{j'}}$ depend on the decomposition $1$ only. We obtain a similar statement for the function $p_{\{(j_1,i_1),(j_2,i_2)\}}^2$. Returning to the equality \eqref{equality after the first step} between two decompositions of $T-S_1$, we apply the other products, that is, those of any of the types \[a_{j_1,i_1}^{0*} \otimes a_{j_2,i_2}^{0*}\text{, }a_{j_1,i_1}^{0*} \otimes a_{j_2,i_2}^{1*}\text{, } a_{j_1,i_1}^{0*} \otimes a_{j_2,i_2}^{2*}\] (as there are no terms where a product of the type $a_{j_1,i_1}^{1} \otimes a_{j_2,i_2}^{2}$ appears, there is no need to apply any $a_{j_1,i_1}^{1*} \otimes a_{j_2,i_2}^{2*})$. We obtain \[p_{((j_1,i_1),(j_2,i_2)), \b}^{12} \in +_{j \notin \{j_1,j_2\}} ((A_{j}^0 \oplus A_{j}^1 \oplus A_{j}^2) \otimes (\otimes_{j' \notin \{j_1,j_2,j\}} \F^{n_{j'}}))\] for all $(j_1,j_2) \in \ct_2([d])$, all $\b \in \{0,1,2\}$ and all $i_1 \in [r_{j_1}^0]$ and $i_2 \in [r_{j_2}^\b]$.

We then substitute in the equality \eqref{equality after the first step} between decompositions of $T-S_1$, and are left with an equality between two sides which are each sums of products of four functions, with each product containing three specified one-variable functions and one unspecified function in the remaining $d-3$ variables. We factor together any terms which have the same three specified one-variable functions, even if this requires changing which side of the equality they are on. This leads to an equality between two decompositions of $T-S_2$ for some new tensor $S_2$. We note that the triples of specified one-variable functions are of one of two types: either they contain at least one term of the type $a_{j,i}^0$, or they do not, in which case they are necessarily of one of the types \begin{equation} a_{j_1,i_1}^1 \otimes a_{j_2,i_2}^1 \otimes a_{j_3,i_3}^1 \text{, } a_{j_1,i_1}^2 \otimes a_{j_2,i_2}^2 \otimes a_{j_3,i_3}^2. \label{product of three functions of the same type} \end{equation} Furthermore, if a term has its product of specified one-variable functions of these two latter types, then it may only arise in the left-hand side or in the right-hand side respectively. All terms involved in the difference $S_2-S_1$ hence have a function of the type $a_{j,i}^0$ as a factor, so $S_2-S_1$ belongs to $R_{\mathrm{target}}$. Since $S_1$ also does, we conclude that $S_2$ belongs to $R_{\mathrm{target}}$.

We emphasize that we have shown that for terms which have their product of three specified one-variable functions as the first (resp. the second) of \eqref{product of three functions of the same type}, the unspecified $(d-3)$-variable function depends only on the decomposition 1 (resp. 2). This guarantees that the next time that we apply Theorem \ref{At most d decompositions} to a lower-order tensor, we indeed apply it to several decompositions of the same tensor. (Likewise, when applying Theorem \ref{At most d decompositions} in the step that we are now completing, we have used that the function $p_{\{(j_1,i_1),(j_2,i_2)\}}^1$ was the same whenever we wrote an equality between decompositions from $\t=1$ and from any other value of $\t$.) We have now obtained an equality of the type \begin{align} & \sum_{1 \le j_1 < j_2 < j_3 \le d} \sum_{i_1 \in [r_{j_1}^1], i_2 \in [r_{j_2}^1], i_3 \in [r_{j_3}^1]} a_{j_1,i_1}^1(x_{j_1}) a_{j_2,i_2}^1(x_{j_2}) a_{j_3,i_3}^1(x_{j_3}) p_{\{(j_1,i_1),(j_2,i_2), (j_3,i_3)\}}^1(\overline{x_{j_1},x_{j_2},x_{j_3}}) \nonumber \\
+ & \sum_{1 \le j_1 < j_2 < j_3 \le d} \sum_{(\b_1, \b_2, \b_3) \in \{0,1,2\}^3 \setminus \{1,2\}^3} \sum_{i_1 \in [r_{j_1}^{\b_1}], i_2 \in [r_{j_2}^{\b_2}], i_3 \in [r_{j_3}^{\b_3}]} a_{j_1,i_1}^{\b_1}(x_{j_1}) a_{j_2,i_2}^{\b_2}(x_{j_2}) a_{j_3,i_3}^{\b_3}(x_{j_3}) \nonumber \\
&p_{\{(j_1,i_1, \b_1),(j_2,i_2, \b_2),(j_3,i_3, \b_3)\}}^{12}(\overline{x_{j_1},x_{j_2},x_{j_3}}) \nonumber \\
= & \sum_{1 \le j_1 < j_2 < j_3 \le d} \sum_{i_1 \in [r_{j_1}^2], i_2 \in [r_{j_2}^2], i_3 \in [r_{j_3}^2]} a_{j_1,i_1}^2(x_{j_1}) a_{j_2,i_2}^2(x_{j_2}) a_{j_3,i_3}^2(x_{j_3}) p_{\{(j_1,i_1),(j_2,i_2), (j_3,i_3)\}}^2(\overline{x_{j_1},x_{j_2},x_{j_3}}). \nonumber \end{align}

This finishes the second step of the splitting process. We then continue the process in a similar way. For every $m \in [d-1]$, at the end of the $m-1$th step we have obtained a tensor $S_{m-1}$ in $R_{\mathrm{target}}$, and an equality between two decompositions of $T - S_{m-1}$ of the type \begin{align} & \sum_{1 \le j_1 < \dots < j_m \le d} \sum_{i_1 \in [r_{j_1}^1], \dots, i_m \in [r_{j_m}^1]} a_{j_1,i_1}^1(x_{j_1}) \dots a_{j_m,i_m}^1(x_{j_m}) p_{\{(j_1,i_1),\dots,(j_m,i_m)\}}^1(\overline{x_{j_1},\dots,x_{j_m}}) \nonumber \\
+ & \sum_{1 \le j_1 < \dots < j_m \le d} \sum_{(b_1, \dots, \b_m) \in \{0,1,2\}^m \setminus \{1,2\}^m} \sum_{i_1 \in [r_{j_1}^{\b_1}], \dots, i_m \in [r_{j_m}^{\b_m}]} a_{j_1,i_1}^{\b_1}(x_{j_1}) \dots a_{j_m,i_m}^{\b_m}(x_{j_m}) \nonumber \\
&p_{\{(j_1,i_1,\b_1),\dots,(j_m,i_m,\b_m)\}}^{12}(\overline{x_{j_1},\dots,x_{j_m}}) \nonumber \\
= & \sum_{1 \le j_1 < \dots < j_m \le d} \sum_{i_1 \in [r_{j_1}^2], \dots, i_m \in [r_{j_m}^2]} a_{j_1,i_1}^2(x_{j_1}) \dots a_{j_m,i_m}^2(x_{j_m}) p_{\{(j_1,i_1),\dots,(j_m,i_m)\}}^2(\overline{x_{j_1},\dots,x_{j_m}}). \nonumber \end{align} The important features of this equality are the following.

\begin{enumerate}[(i)]

\item All terms of both sides are products of $m+1$ functions, $m$ of which are specified one-variable functions of the type $a_{j,i}^\b$ and the remaining one of which is an unspecified function in the other $d-m$ variables.
\item Whenever $j_1,\dots,j_m$ are pairwise distinct indices in $[d]$, $\b_1, \dots, \b_m$ are in $\{0,1,2\}$, and $i_1 \in [r_{j_1}^{\b_1}], \dots, i_m \in [r_{j_m}^{\b_m}]$ are indices, there is only at most one term in total (from both the left-hand and right-hand sides) that has \[\otimes_{1 \le t \le m} a_{j_t,i_t}^{\b_t}\] as the product of its $m$ specified one-variable functions.
\item Whenever a term does not have any function of the type $a_{j,i}^0$ as a factor, its product of specified one-variable functions is of one of the types \[\otimes_{1 \le t \le m} a_{j_t,i_t}^{1} \text{, } \otimes_{1 \le t \le m} a_{j_t,i_t}^{2},\] and it must come from the left-hand side and from the right-hand side respectively in these cases.
\item In the situation of terms described in the previous item, the remaining unspecified $(d-m)$-variable functions depend only on the decompositions $1$ and $2$ respectively.

\end{enumerate}

In particular, after $d-2$ steps we have an equality between two decompositions of $T - S_{d-2}$, that obeys the properties above for $m=d-1$. This equality is of the type \begin{align*} & \sum_{j=1}^d \sum_{i_{[d]\setminus \{j\}} \in \prod_{j' \neq j} [r_{j'}^1]} \left( \prod_{j' \neq j} a_{j', i_{j'}}^1(x_{j'}) \right) p_{j,i_{[d] \setminus \{j\}}}^1(x_j)\\
+ & \sum_{j=1}^d \sum_{\b_{[d]\setminus \{j\}} \in \{0,1,2\}^{[d] \setminus \{j\}} \setminus \{1,2\}^{[d] \setminus \{j\}}} \sum_{i_{[d]\setminus \{j\}} \in \prod_{j' \neq j} [r_{j'}^{\b_{j'}}]} \left( \prod_{j' \neq j} a_{j', i_{j'}}^{\b_{j'}}(x_{j'}) \right) p_{j,i_{[d] \setminus \{j\}}, \b_{[d] \setminus \{j\}}}^{12}(x_j)\\
= & \sum_{j=1}^d \sum_{i_{[d]\setminus \{j\}} \in \prod_{j' \neq j} [r_{j'}^2]} \left( \prod_{j' \neq j} a_{j', i_{j'}}^2(x_{j'}) \right) p_{j,i_{[d] \setminus \{j\}}}^2(x_j) \end{align*} where we have written $i_{[d]\setminus \{j\}}$ for $(i_{j'})_{j' \in [d] \setminus \{j\}}$ and $\b_{[d]\setminus \{j\}}$ for $(\b_{j'})_{j' \in [d] \setminus \{j\}}$.

Each term is now a product of $d$ one-variable functions, where $d-1$ of them are specified functions of the type $a_{j,i}^{\b}$ and the last one is an unspecified function. Applying one last time the products of the $d-1$ one-variable specified functions shows that the unspecified function is a linear combination of functions of the type $a_{j,i}^{\b}$ (for the relevant value of $j$). After yet again gathering terms, we obtain some tensor $S_{d-1} \in R_{\mathrm{target}}$ and an equality between two decompositions of $T - S_{d-1}$ of the type \begin{align} & \sum_{i_1 \in [r_1^1], \dots, i_d \in [r_d^1]} \l_{(1,i_1,1), \dots, (d,i_d,1)} a_{1,i_1}^1(x_1) \dots a_{d,i_d}^1(x_d) \nonumber \\
+ & \sum_{j=1}^d \sum_{i_{[d]\setminus \{j\}} \in \prod_{j' \neq j} [r_{j'}^1]} \l_{(1,i_1,1), \dots, (j-1, i_{j-1},1), (j, i_{j},2), (j+1, i_{j+1},1), \dots, (d,i_d,1)} \left( \prod_{j' \neq j} a_{j', i_{j'}}^1(x_{j'}) \right) a_{j,i_j}^2(x_j)\nonumber \\
+ & \sum_{(\b_1, \dots, \b_d) \in \{0,1,2\}^d \setminus \{1,2\}^d} \sum_{i_1 \in [r_1^{\b_1}], \dots, i_d \in [r_d^{\b_d}]} \l_{(1,i_1,\b_1), \dots, (d,i_d,\b_d)} a_{1,i_1}^{\b_1}(x_1) \dots a_{d,i_d}^{\b_d}(x_d)\nonumber \\
= & \sum_{j=1}^d \sum_{i_{[d]\setminus \{j\}} \in \prod_{j' \neq j} [r_{j'}^2]} \l_{(1,i_1,2), \dots, (j-1, i_{j-1},2), (j, i_{j},1), (j+1, i_{j+1},2), \dots, (d,i_d,2)} \left( \prod_{j' \neq j} a_{j', i_{j'}}^2(x_{j'}) \right) a_{j,i_j}^1(x_j)\nonumber \\
+ & \sum_{i_1 \in [r_1^2], \dots, i_d \in [r_d^2]} \l_{(1,i_1,2), \dots, (d,i_d,2)} a_{1,i_1}^2(x_1) \dots a_{d,i_d}^2(x_d) \label{equality at the end of the argument} \end{align} 
for some $\l_{(1,i_1,\b_1), \dots, (d,i_d,\b_d)} \in \F$. In this equality, all terms on both sides are multiples of products of the type \begin{equation} a_{1,i_1}^{\b_1} \otimes \dots \otimes a_{d,i_d}^{\b_d}. \label{products with j,i,beta} \end{equation} If a term has $\b_j \neq 0$ for every $j \in [d]$, then it has either at least $d-1$ or at most one index equal to $1$ (and the remaining ones are equal to $2$), and in these two cases it necessarily must come from the left-hand and right-hand side respectively, so the sets of values of $(\b_1, \dots, \b_d)$ from the terms appearing in both sides are disjoint. Because the products \eqref{products with j,i,beta} are linearly independent, it follows that either side of \eqref{equality at the end of the argument} is equal to $0$. These sides are equal to $T-S_{d-1}$, so we have shown $T=S_{d-1}$. Therefore, $T$ belongs to $R_{\mathrm{target}}$. In particular it has the desired decomposition and has slice rank at most \[r_1^0 + \dots + r_d^0.\qedhere\]

\end{proof}

\section{Open questions}\label{Section: Open questions}

Let us finish by discussing a few questions that are left open by our current results and proofs. As we had explained in Section \ref{Section: the tensor rank case}, in the tensor rank case the exponent in the upper bound that we show on the number of minimal-length tensor rank decompositions cannot be improved by more than a factor of $d$, but this nonetheless does not give a complete answer to our first question.

\begin{question}

What are the optimal bounds in Proposition \ref{tensor rank case}, and for which tensors are they attained ?

\end{question}

Returning to the slice rank, Example \ref{matrix times slice rank 2r example} shows that Theorem \ref{Main theorem} is false for infinite fields, that in the finite field case the bounds cannot be uniform with respect to $\F$, and furthermore that even in the finite field case they grow square-exponentially in the slice rank of the tensor. This leads us instead to the following formulation, which we believe provides a statement of a similar kind and which is uniform over all fields.

\begin{conjecture} \label{New formulation} Let $d \ge 3$, $k \ge 1$ be integers. Then there exists a positive integer $C(d,k)$ such that the following holds. Let $T$ be an order-$d$ tensor over some arbitrary field $\F$ and with slice rank equal to $k$. Then there exist linear subspaces \[A_1 \subset \F^{n_1}, \dots, A_d \subset \F^{n_d}\] each with dimension at most $C(d,k)$ such that if $r_1, \dots, r_d$ are nonnegative integers satisfying \[r_1+\dots+r_d=k\] and $a_{j,i}: [n_j] \to \F$ with $j \in [d]$ and $i \in [r_j]$ are functions satisfying \[T(x_1, \dots, x_d) = \sum_{j=1}^d \sum_{i=1}^{r_j} a_{j,i}(x_j) b_{j,i}(\overline{x_j})\] for some $(d-1)$-variable functions $b_{j,i}: \prod_{j' \neq j} [n_{j'}] \to \F$, then \[\langle a_{1,1},\dots,a_{1,r_1} \rangle \subset A_1, \dots, \langle a_{d,1},\dots,a_{d,r_d} \rangle \subset A_d.\] \end{conjecture}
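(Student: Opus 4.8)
The plan is to fix an arbitrary coordinate $j \in [d]$ and to bound the dimension of the subspace $A_j := \sum_\theta A_j^\theta$, where $\theta$ ranges over all minimal-length slice rank decompositions of $T$ and $A_j^\theta$ denotes the span of the coordinate-$j$ one-variable functions of $\theta$. Once $\dim A_j \le C(d,k)$ is established for a suitable $C(d,k)$, the subspace $A_j$ itself witnesses the statement, since any decomposition of $T$ with $r_1 + \dots + r_d = k$ has length $\sr T$ and is therefore minimal. Two features of the target deserve emphasis: only minimal-length decompositions are involved, and we must bound the \emph{dimension} of $A_j$ rather than count configurations, the latter being impossible uniformly in $\F$ by Example \ref{matrix times slice rank 2r example}.

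The first ingredient would be a rigidity consequence of Theorem \ref{At most d decompositions}. Suppose $\theta_1, \dots, \theta_{d+1}$ are minimal-length decompositions of $T$ such that for \emph{every} coordinate $j' \in [d]$ the subspaces $A_{j'}^{\theta_1}, \dots, A_{j'}^{\theta_{d+1}}$ form a sunflower with some center $C_{j'}$. After changing bases in the one-variable functions (Construction \ref{Construction 1}) so that a fixed basis of $C_{j'}$ occurs among the coordinate-$j'$ functions of each $\theta_a$, Theorem \ref{At most d decompositions} applies with $h = d+1$, the bases of the $C_{j'}$ as center functions and bases of the petals as petal functions, and yields $\sr T \le \sum_{j'} \dim C_{j'}$. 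Since each $\theta_a$ has length $k = \sr T$ and $\dim A_{j'}^{\theta_a} = \dim C_{j'} + (\text{dimension of the petal of } \theta_a \text{ in coordinate } j')$, summing over $j'$ forces every petal to vanish, so $A_{j'}^{\theta_a} = C_{j'}$ for all $a$ and $j'$. Consequently no $d+1$ minimal-length decompositions of $T$ whose coordinate-$j$ spaces are not all equal can form a sunflower in every coordinate simultaneously.

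The second ingredient I would establish is a linear-algebraic sunflower lemma, uniform over all fields: there is a function $g(k,h)$ such that any family of subspaces, each of dimension at most $k$, which contains no sunflower of size $h$ has span of dimension at most $g(k,h)$. This holds with $g(1,h) = h-1$ for families of lines, and in general the natural approach is induction on $k$ — extract a maximal subfamily in direct sum (a sunflower with trivial center, hence of size $< h$), quotient by its span, and recurse — the delicate point being to transfer a sunflower found in the quotient back to the original family without pigeonholing over subspaces, which is what would otherwise reintroduce a dependence on $|\F|$. Granting this lemma, the scheme is: if $\dim A_j = \dim(\sum_\theta A_j^\theta) > g(k, d+1)$, then the set of distinct subspaces $\{A_j^\theta\}$, having span of dimension exceeding $g(k,d+1)$, contains a sunflower of size $d+1$, necessarily with a nontrivial petal; if one could simultaneously arrange sunflower structure in the remaining coordinates for a corresponding choice of $d+1$ decompositions, the rigidity statement above would be violated, forcing $\dim A_j \le g(k,d+1) =: C(d,k)$.

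The crux, and the step I expect to be the main obstacle, is exactly this simultaneity: extracting a sunflower in coordinate $j$ from a family with large $j$-th span need not preserve a large span in another coordinate $j'$, so a coordinate-by-coordinate iteration of the sunflower lemma does not go through, and a genuinely simultaneous sunflower cannot be extracted from an arbitrary family of $d$-tuples of subspaces — which is precisely what would fail over infinite fields in the absence of further input. Overcoming this should require either a strengthened sunflower lemma that produces a sunflower in one coordinate while quantitatively retaining the spans in the others, or, more in the spirit of the present paper, exploiting that the $d$-tuples here arise from decompositions of a fixed tensor of slice rank $k$: the term-tracking arguments behind Proposition \ref{order-$d$ decompositions of zero} and Theorem \ref{At most d decompositions} should constrain how the families of one-variable functions in different coordinates may be correlated, and so rule out the configurations that obstruct simultaneous extraction.
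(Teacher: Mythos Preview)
This statement is Conjecture~\ref{New formulation}, posed in Section~\ref{Section: Open questions} as an \emph{open problem}; the paper does not prove it, so there is no proof to compare your proposal against. The paper's proved result in this direction is Theorem~\ref{Main theorem}, whose bound is on the \emph{number} of $d$-tuples $(A_1^\theta,\dots,A_d^\theta)$ rather than on $\dim\sum_\theta A_j^\theta$, and that bound unavoidably depends on $|\F|$ (as the paper itself notes, via Example~\ref{matrix times slice rank 2r example}).

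As for the content of your sketch: your first ingredient is correct and is precisely the mechanism already used in the proof of Theorem~\ref{Main theorem} in Section~\ref{Section: Deduction of the structure theorem on slice rank decompositions} (a sunflower of $d+1$ minimal-length decompositions forces all petals to vanish). Your second ingredient, a field-uniform linear sunflower lemma bounding $\dim(\sum_i V_i)$ for families of $k$-dimensional subspaces with no size-$h$ sunflower, is plausible but your quotient-and-recurse outline does not establish it; the lifting step you flag is a real issue. The decisive gap, however, is the one you yourself identify: Theorem~\ref{At most d decompositions} requires the sunflower condition to hold in \emph{every} coordinate simultaneously, whereas your extraction only produces a sunflower in the single coordinate $j$, with no control over the remaining coordinates. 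The paper's tree argument handles exactly this simultaneity by pigeonholing at each depth over the vectors $w$ witnessing a nontrivial intersection --- and that pigeonhole is where the dependence on $|\F|$ enters. Your proposal correctly isolates this as the obstacle but does not supply an idea to overcome it, so it is an accurate diagnosis of the difficulty rather than a proof; the conjecture remains open.
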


We would furthermore not be surprised if for any fixed $d \ge 3$ we could take $C(k,d)$ to be linear in $k$, as we have not managed to build a counterexample disproving this.

\begin{question}

Is Conjecture \ref{New formulation} true with furthermore $C(k,d) = O(k)$ for every $d \ge 3$ ?

\end{question}

One further direction in which we believe that our results can be taken further is that of formulating and proving suitable generalisations of Theorem \ref{At most d decompositions} and Theorem \ref{Main theorem} for other notions of rank. As described in the introduction, one such notion that is of interest is the partition rank.

\begin{question}

Can we prove an analogue of Theorem \ref{Main theorem} for the partition rank ?

\end{question}

As was the case for the slice rank, formulating the property already appears to be challenging, and it is useful to keep in mind that the five ways in which various slice rank decompositions can arise all extend to the partition rank. Let us finish by describing these extensions. For $d \ge 2$ an integer, we write $\cb_2([d])$ for the set of bipartitions $\{J,J^c\}$ of $[d]$ with $J,J^c \neq \emptyset$. We recall that if $d \ge 2$ is an integer, $J$ is a subset of $[d]$, then for every $x \in [n_1] \times \dots \times [n_d]$ we write $x(J)$ for the restriction of $x$ to its coordinates in $J$. Let us also recall the definition of the partition rank.

\begin{definition} Let $d \ge 2$ be an integer, and let $T: [n_1] \times \dots \times [n_d] \to \F$ be an order-$d$ tensor. We say that the \emph{partition rank} of $T$, denoted by $\pr T$, is the smallest nonnegative integer $k$ such that there exist nonnegative integers $r_{\{J,J^c\}}$ for each bipartition $\{J,J^c\} \in \cb_2([d])$, such that \[\sum_{\{J,J^c\} \in \cb_2([d])} r_{\{J,J^c\}} = k\] and satisfying one of the following two equivalent properties. \begin{enumerate}

\item There exist matrices $M_{\{J,J^c\}}: (\prod_{j \in J} [n_{j}]) \times (\prod_{j \in J^c} [n_{j}]) \to \F$ with rank at most $r_{\{J,J^c\}}$ for each $\{J,J^c\} \in \cb_2([d])$ such that \begin{equation} T(x_1, \dots, x_d) = \sum_{\{J,J^c\} \in \cb_2([d])} M_{\{J,J^c\}}(x(J), x(J^c)) \label{partition rank decomposition with matrices} \end{equation} is satisfied for all $(x_1, \dots, x_d) \in [n_1] \times \dots \times [n_d]$.

\item For each $\{J,J^c\} \in \cb_2([d])$ and each $i \in [r_{\{J,J^c\}}]$ there exist some functions $a_{J,i}: \prod_{j \in J} [n_{j}] \to \F$ and $b_{J^c,i}: \prod_{j \in J^c} [n_{j}] \to \F$ such that \begin{equation} T(x_1, \dots, x_d) = \sum_{\{J,J^c\} \in \cb_2([d])} \sum_{i=1}^{r_{\{J,J^c\}}} a_{J,i}(x(J)) b_{J^c,i}(x(J^c)) \label{partition rank decomposition} \end{equation} is satisfied for all $(x_1, \dots, x_d) \in [n_1] \times \dots \times [n_d]$. \end {enumerate}

We say that an expression such as \eqref{partition rank decomposition} is a \emph{partition rank decomposition} of $T$, say that the \emph{length} of the decomposition is the integer \[\sum_{\{J,J^c\} \in \cb_2([d])} r_{\{J,J^c\}},\] and say that the decomposition has \emph{minimal length} if its length is equal to the partition rank of $T$. \end{definition}

Constructions \ref{Construction 1} to \ref{Construction 5} respectively adapt to the following. Let $d \ge 3$, $k \ge 1$ be integers, and let $T$ be an order-$d$ tensor with partition rank $k$. 

\begin{construction} \emph{If $M_{\{J,J^c\}}: (\prod_{j \in J} [n_j]) \times (\prod_{j \in J^c} [n_j]) \to \F$ with $\{J,J^c\} \in \cb_2([d])$ are matrices satisfying \eqref{partition rank decomposition with matrices} and \[\sum_{\{J,J^c\} \in \cb_2([d])} \rk M_{\{J,J^c\}} = \pr T,\] then rewriting any minimal-length matrix decomposition of any of the matrices $M_{\{J,J^c\}}$ leads to a new minimal-length partition rank decomposition of $T$.} \end{construction}

\begin{construction} \emph{If the tensor rank of $T$ is equal to the partition rank $k$ of $T$, and \[T(x_1, \dots, x_d)  = \sum_{i=1}^k a_{1,i}(x_1) \dots a_{d,i}(x_d)\] is a length-$k$ tensor rank decomposition of $T$ then for any map $\iota: [k] \to \cb_2([d])$ the decomposition \[T(x_1, \dots, x_d) = \sum_{\{J,J^c\} \in \cb_2([d])} \sum_{i \in \iota^{-1}(\{J,J^c\})} (\prod_{j \in J} a_{j,i})(x(J)) (\prod_{j \in J^c} a_{j,i})(x(J^c))\] is a minimal-length partition rank decomposition of $T$. This is merely the most extreme form that this construction can take, and tensors $T$ satisfying much weaker assumptions than $\pr T = \tr T$ will lend themselves to it. For instance if $T$ has partition rank $k$ and can merely be written as \[T(x_1, \dots, x_d) = \sum_{i=1}^k a_{i,J_{i,1}}(x(J_{i,1})) a_{i,J_{i,2}}(x(J_{i,2})) a_{i,J_{i,3}}(x(J_{i,3})) \] where for every $i \in [k]$ the set $\{J_{i,1},J_{i,2},J_{i,3}\}$ is a tripartition of $[d]$ with $J_{i,1}, J_{i,2}, J_{i,3}$ each non-empty and $a_{i,J_{i,s}}:\prod_{j \in J_{i,s}} [n_{j}] \to \F$ is a function for each $s=1,2,3$ then we already obtain different minimal-length decompositions of $T$ depending on how we view each summand as a product of two terms.}\end{construction}

\begin{construction} \emph{A tensor $[k]^d \to \F$ usually has partition rank equal to $k$. Indeed, for any $d \ge 3$, if the field $\F$ is finite then there are at most \[k^{2^d} |\F|^{(k-1)(k^{d-1}+k)} \le (k^{2^d} / |\F|^{k}) |\F|^{k^d}\] tensors with partition rank at most $k-1$. So writing $T$ as the sum of its $(d-1)$-variable slices as in Construction \ref{Construction 3} again provides $d$ minimal-length partition rank decomposition of $T$. We can also write $T$ as a sum of its $d'$-variable slices for $d'<d-1$, but the resulting partition rank decomposition then has length $k^{d-d'} > k$, which is not minimal, and we hence have less of a generalisation in this direction.} \end{construction}

\begin{construction} \label{Construction 9} \emph{If the decomposition \[T(x_1, \dots, x_d)  = \sum_{\{J,J^c\} \in \cb_2([d])} \sum_{i=1}^{r_{\{J,J^c\}}} a_{J,i}(x(J)) a_{J^c,i}(x(J^c))\] is a minimal-length decomposition of $T$, $\{J_1,J_2,J\}$ is a tripartition of $[d]$ with $J_1, J_2$ each with size at least $1$, $i_1 \in [r_{\{J_1, J_1^c\}}]$ and $i_2 \in [r_{\{J_2, J_2^c\}}]$ are indices, and $c: \prod_{j \in J} [n_j] \to \F$ is a function if $J$ is non-empty and an element of $\F$ otherwise, then replacing $a_{J_1^c,i_1}$ and $a_{J_2^c,i_2}$ by respectively \[a_{J_1^c,i_1} + a_{I_2,i_2} \otimes c \text{ and } a_{J_2^c,i_2} - a_{J_1,i_1} \otimes c\] leads to a new minimal-length partition rank decomposition of $T$.} \end{construction}

\begin{construction} \emph{More generally, if $s \in [2,d]$ is an integer, $\{J_1, \dots, J_s, J\}$ is a partition of $[d]$ with $J_1, \dots, J_s$ each with size at least $1$, $i_1 \in [r_{\{J_1, J_1^c\}}], \dots, i_s \in [r_{\{J_s, J_s^c\}}]$ are indices, and $c_1, \dots, c_s$ are functions $\prod_{j \in J} [n_j] \to \F$ if $J$ is non-empty and elements of $\F$ otherwise, which satisfy \[c_1 + \dots + c_s = 0,\] then replacing $a_{J_t^c,i_t}$ by \[a_{J_t^c,i_t} + (\otimes_{t’ \in [s] \setminus \{t\}} a_{J_{t'},i_{t'}}) \otimes c_t\] for each $t \in [s]$ leads to a new minimal-length partition rank decomposition of $T$.} \end{construction}

Construction \ref{Construction 9} shows that whenever $|J| \ge 2$, it is never true (for any integers $d \ge 3$, $k \ge 2$, and any field $\F$) that there exists a linear subspace $A_J$ of $\otimes_{j \in J} \F^{n_j}$ with dimension bounded above depending on $d,k,\F$ only such that any functions $a_{J,i}$ from any minimal-length decomposition of $T$ are contained in $A_J$.

Moreover, in the case of the partition rank there may be new constructions in addition to the analogues above.

\end{document}